\newcommand{\bmx}{\bm{x}}
\newcommand{\bmy}{\bm{y}}
\newcommand{\bmxi}{\bm{\xi}}
\newcommand{\bmxic}{\bm{\xi_t}}
\newcommand{\bmxin}{\bm{\xi_{t+1}}}
\newcommand{\bmc}{\bm{x}_t}
\newcommand{\bmn}{\bm{x}_{t+1}}
\newcommand{\bmo}{\bm{x}^*}
\newcommand{\grad}{\nabla}
\newcommand{\tmix}{\tau_{\text{mix}}}
\DeclareMathOperator*{\argmin}{argmin}
\DeclareMathOperator*{\argmax}{argmax}
\newtheorem{assumption}{Assumption}
\begin{document}

\title{Stochastic-Constrained Stochastic Optimization with Markovian Data}

\author{\name Yeongjong Kim \email kimyj@kaist.ac.kr \\
       \addr Department of Mathematical Sciences\\
       KAIST\\
       Daejeon 34141, South Korea
       \AND
       \name Dabeen Lee \email dabeenl@kaist.ac.kr \\
       \addr Department of Industrial and Systems Engineering\\
       KAIST\\
       Daejeon 34141, South Korea}

\editor{My editor}

\maketitle

\begin{abstract}%
This paper considers stochastic-constrained stochastic optimization where the stochastic constraint is to satisfy that the expectation of a random function is below a certain threshold. In particular, we study the setting where data samples are drawn from a Markov chain and thus are not independent and identically distributed. We generalize the drift-plus-penalty framework, a primal-dual stochastic gradient method developed for the i.i.d. case, to the Markov chain sampling setting. We propose two variants of drift-plus-penalty; one is for the case when the mixing time of the underlying Markov chain is known while the other is for the case of unknown mixing time. In fact, our algorithms apply to a more general setting of constrained online convex optimization where the sequence of constraint functions follows a Markov chain. Both algorithms are adaptive in that the first works without knowledge of the time horizon while the second uses AdaGrad-style algorithm parameters, which is of independent interest. We demonstrate the effectiveness of our proposed methods through numerical experiments on classification with fairness constraints.
\end{abstract}

\begin{keywords}
stochastic-constrained stochastic optimization, constrained online convex optimization, Markov chain stochastic gradient descent, drift-plus-penalty, classification with fairness constraints
\end{keywords}

\section{Introduction}

In this paper, we consider the stochastic-constrained stochastic optimization (SCSO) problem 
\begin{equation}\label{scso}
    \min_{\bmx\in\mathcal{X}} \quad \mathbb{E}_{\bmxi\sim \mu}[f(\bmx, \bmxi)]\notag\quad  \text{s.t.}\quad \mathbb{E}_{\bmxi\sim \mu}[g(\bmx, \bmxi)]\leq 0\tag{SCSO}
\end{equation}
where the expectation is taken with respect to the random parameter $\bmxi$ over a stationary distribution $\mu$, $f$ and $g$ are convex loss and constraint functions, and $\mathcal{X}$ is a compact domain. This formulation of \ref{scso} has applications in stochastic programming with a risk constraint~\citep{cvar}, chance-constrained programming~\citep{doi:10.1137/050622328}, portfolio optimization~\citep{doi:10.1137/S1052623402420528}, sparse matrix completion~\citep{conservative}, semi-supervised learning~\citep{10.5555/1841234}, classification with fairness constraints~\citep{fairness-constraints-jmlr, Classification-w-fairness-constraints, empirical-risk-min-under-fairness}, Neyman-Pearson classification~\citep{1522642,JMLR:v12:rigollet11a}, ranking with fairness constraints~\citep{ranking-with-fairness-constraints}, recommendation systems with fairness constraints \citep{fairness-in-Collb-Filtering}, scheduling in distributed data centers~\citep{OCO-stochastic}, safe reinforcement learning~\citep{JMLR:v16:garcia15a}, and stochastic simple bilevel optimization~\citep{jalilzadeh2023stochastic,cao2023projectionfree}.

Stochastic approximation (SA) algorithms are prevalent solution methods for \ref{scso}. Basically, we run gradient-based algorithms with an oracle providing i.i.d. samples of $f(\bmx,\bmxi),g(\bmx,\bmxi),\grad f(\bmx,\bmxi),\grad g(\bmx, \bmxi)$ for a given solution $\bmx$. \cite{cooperative} proposed the cooperative stochastic approximation scheme for \ref{scso}, which is a stochastic extension of Polyak's subgradient method developed for constrained optimization~\citep{polyak}. \cite{Xiantao} developed the penalized stochastic gradient method that takes the square of the constrained function as a penalty term. \cite{JMLR:v21:19-1022} developed a level set-based algorithm for \ref{scso}. \cite{conservative} studied an augmented Lagrangian-based stochastic primal-dual algorithm, which was inspired by the primal-dual framework for online convex optimization with long-term constraints due to~\cite{long-term-1}. Furthermore, motivated by recent success in adaptive gradient algorithms, \cite{aprild}  considered an adaptive primal-dual stochastic gradient method for \ref{scso}. \cite{doi:10.1287/moor.2022.1257} proposed a stochastic variant of the proximal method of multipliers. \cite{doi:10.1287/ijoc.2022.1228} studied another stochastic proximal method of multipliers based on linearization.

SA algorithms for other related problem settings are as follows. \cite{OCO-stochastic} studied online convex optimization with stochastic constraints where the constraint functions are stochastic i.i.d. while the objective functions are arbitrary, for which they proposed the drift-plus-penalty (DPP) algorithm. DPP applies to \ref{scso} given that i.i.d. samples of $g(\bmx,\bmxi)$, $\grad g(\bmx,\bmxi)$, and $\grad f(\bmx,\bmxi)$ are available. \cite{dpp-md} developed an extension of DPP, and \cite{lee2023projectionfree} provided a projection-free algorithm for the setting. Moreover, \ref{scso} can be formulated as a stochastic saddle-point problem by taking the Lagrangian if certain constraint qualifications hold. \cite{mirrorprox1,mirrorprox2} developed stochastic mirror-prox algorithms for general stochastic saddle point problems. \cite{doi:10.1287/moor.2021.1175} devised an accelerated stochastic framework for convex-concave saddle-point problems, and \cite{pmlr-v206-yazdandoost-hamedani23a} devised a randomized adaptive primal-dual method.

The aforementioned solution methods for \ref{scso} require i.i.d. data samples from the stationary distribution $\mu$ when running SA algorithms. However, there are several application scenarios in which sampling from the stationary distribution $\mu$ independently and identically is difficult. For example, federated learning serves as an alternative to traditional machine learning systems that require data centralization, with the purpose of improving data privacy~\citep{https://doi.org/10.48550/arxiv.1806.00582}. The basic framework is that data is stored on individual local devices while the training is governed by a central server. Another related setting is distributed optimization over sensor networks~\citep{1307319,4276978,4434888} and multi-agent systems~\citep{JOHANSSON20081175}. For these applications, an enormous amount of data is distributed over distinct nodes of a network, for which data exchange and massage passing are between immediate neighboring nodes.

One resolution approach for such application scenarios is Markov chain stochastic gradient descent (SGD)~\citep{johansson2010,doi:10.1137/080726380}. Basically, Markov chain SGD takes a random walk over a network of local data centers and updates solutions based on data acquired from the data centers visited. Here, as the data is generated by a Markov random walk, there is inherent dependence and bias between data samples. More generally, the framework can be viewed as a variant of the Markov chain Monte Carlo method~\citep{mcmc}. That said, Markov chain SGD can be applied to other applications domains where data is collected from a Markov process, such as decentralized learning~\citep{NEURIPS2021_a87d27f7}, robust estimation~\citep{POLJAK198053,pmlr-v97-sarkar19a}, and reinforcement learning~\citep{NEURIPS2020_c22abfa3,NEURIPS2021_fd2c5e46}.

Following the Markov incremental subgradient methods due to~\cite{johansson2010,doi:10.1137/080726380} designed for distributed optimization problems, Markov chain SGD with data sampled from a general Markov process have been studied. \cite{emd} developed Markov chain SGD with data sampled from an ergodic process. Later, \cite{NEURIPS2018_1371bcce} studied Markov chain SGD for convex and nonconvex problems when the underlying Markov chain is nonreversible. \cite{doan2020convergence} proposed an accelerated version of Markov chain SGD for both convex and nonconvex settings. \cite{Levy22} considered the setting where the mixing time of the underlying Markov chain is unknown, for which they developed Markov chain SGD based on the multi-level Monte Carlo estimation scheme~\citep{giles_2015,blanchet-WSC}. \cite{Markov-constrained-sto-opt} studied nonconvex problems where the transition of the underlying Markov chain is state-dependent, motivated by strategic classification and reinforcement learning. Recent works~\citep{9769873,pmlr-v202-even23a} established some performance guarantees of Markov chain SGD under minimal assumptions.

Applications of \ref{scso} naturally motivate and necessitate algorithmic frameworks that can handle data sets with inherent dependence and bias. Portfolio optimization in finance takes time series data. Machine learning with fairness constraints processes heterogeneous data sets from disjoint sources. Scheduling in distributed data centers will benefit from distributed optimization technologies. Safe and constrained reinforcement learning can be formulated as~\ref{scso} for which the training data is obtained from trajectories of the underlying Markov decision process. Despite this immediate need, no prior work exists for solving~\ref{scso} with non-i.i.d. data samples. Motivated by this, the objective of this paper is to develop stochastic approximation algorithms that run with data sampled from a Markov chain.

\subsection{Our Contributions}

This paper initiates the study of stochastic approximation algorithms for stochastic-constrained stochastic optimization~\eqref{scso} using non-i.i.d. data samples. Inspired by recent advances in Markov chain SGD, we develop primal-dual stochastic gradient methods using data sampled from a Markov chain, which can be viewed as primal-dual variants of Markov chain SGD. Specifically, we extend the drift-plus-penalty algorithm by~\cite{OCO-stochastic} that was originally developed for the i.i.d. setting. We adopt the approach of ergodic mirror descent by~\cite{emd} for the case of known mixing time and the framework of~\cite{Levy22} for the setting where the mixing time is unknown. 

Our key technical contribution is to develop two variants of the drift-plus-penalty algorithm that can take a sequence of dependent constraint functions. These two algorithms solve constrained online convex optimization where the objective functions can be arbitrary and the constraint functions are generated from a Markov chain. One of them is for the case of known mixing time, while the other is for the case when the mixing time is unknown. We provide regret and constraint violation bounds for the algorithms, which delineate how their performance depends on the mixing time. Based on the regret and constraint violation guarantees, we analyze the optimality gap and feasibility gap for~\ref{scso}. The connection between the constrained online convex optimization formulation and~\ref{scso} for our Markovian setting is not as immediate as the i.i.d. setting because the expectation in~\eqref{scso} is taken with respect to the stationary distribution of the underlying Markov chain.

What follows is a more detailed description of our contributions. Our results are also summarized in \Cref{results}.
\begin{table*}     
\begin{center}
\begin{tabular}{c|c|c|c}
\toprule
& \multirow{2}{*}{\textbf{\Cref{alg0}}} & \textbf{\Cref{alg0} under} & \multirow{2}{*}{\textbf{\Cref{alg1}}}\\
& & \textbf{ Slater's condition} & \\
\hline
\textbf{Oblivious to $\tmix$} &\ding{55} &\ding{55}&\ding{52}\\
\hline
\textbf{Regret} &$\tilde O\left(\tmix^{1-\beta} T^{1-\beta}\right)$& $\tilde O\left(\sqrt{\tmix T}\right)$&adaptive\\
\hline
\textbf{Constraint violation} &$O\left(\tmix^{\beta/2} T^{(\beta+1)/2}\right)$&$\tilde O\left(\sqrt{\tmix T}\right)$&adaptive\\
\hline
\textbf{Optimality gap} &$\tilde O\left(\frac{\tmix^{1-\beta}}{T^{\beta}} + \frac{\tmix^{\beta/2}}{T^{(1-\beta)/2}}\right)$&$\tilde O\left({\frac{\sqrt{\tmix}}{ \sqrt{T}}}\right)$&$\tilde O\left(\frac{\tmix^{1-\beta}}{T^{\beta}}\right)$\\
\hline
\textbf{Feasibility gap} &$\tilde O\left(\frac{\tmix^{\beta/2}}{T^{(1-\beta)/2}}\right)$& $\tilde O\left({\frac{\sqrt{\tmix}}{\sqrt {T}}}\right)$ & $\tilde O\left(\frac{\tmix^{(2\beta+1)/4}}{T^{(1-\beta)/2}}\right)$ \\
\bottomrule
\end{tabular}
\caption{Bounds on regret, constraint violation, optimality gap, and feasibility gap under Algorithms~\ref{alg0} and~\ref{alg1} ($\beta\in(0,1/2]$ is a predetermined algorithm parameter that controls the balance between regret and constraint violation)}\label{results}
\end{center}
\end{table*}
\begin{itemize}
    \item In \Cref{sec:tmix}, we consider the case when the mixing time of the underlying Markov chain is known, for which we develop \Cref{alg0}, a variant of the drift-plus-penalty algorithm. We first prove that for online convex optimization with Markovian constraints, the regret of~\Cref{alg0} is $\tilde O(\tmix^{1-\beta} T^{1-\beta})$ and the constraint violation is $O(\tmix^{\beta/2} T^{(\beta+1)/2})$ where $\tmix$ is the mixing time, $T$ is the length of horizon, $\beta$ can be chosen to be any number in $(0,1/2]$, and $\tilde O$ hides a $\text{poly}\log T$ factor. If we further assume that \eqref{scso} satisfies Slater's constraint qualification, then the regret and constraint violation of~\Cref{alg0} can be both bounded by $\tilde O(\sqrt{\tmix T})$. We remark that~\Cref{alg0} is adaptive in that its parameters are chosen without knowledge of $T$, unlike the vanilla drift-plus-penalty algorithm. These results generalize the work of~\cite{OCO-stochastic}.
    \item Based on the regret and constraint violation analysis for~\Cref{alg0}, we show that the averaging of the sequence of solutions generated by~\Cref{alg0} guarantees that the optimality gap is bounded by $\tilde O(\tmix^{1-\beta}T^{-\beta} + \tmix^{\beta/2}T^{-(1-\beta)/2})$ while the feasibility gap is bounded by $\tilde O(\tmix^{\beta/2}T^{-(1-\beta)/2})$. If we further assume that \eqref{scso} satisfies Slater's constraint qualification, then the optimality gap and feasibility gap of~\Cref{alg0} can be both bounded by $\tilde O(\sqrt{\tmix/T})$.
    \item In \Cref{sec:unknown}, we propose~\Cref{alg1}, another variant of the drift-plus-penalty algorithm, for the setting where the mixing time is unknown. The parameters of \Cref{alg1} are set in an adaptive fashion, as in the AdaGrad method~\citep{adagrad,levy2017}. Then we apply \Cref{alg1} to constrained online convex optimization where the objective and constraint functions are given by the Multi-level Monte Carlo estimation scheme as in~\cite{Levy22}. We provide adaptive regret and constraint violation bounds for \Cref{alg1}. We note that \Cref{alg1} is the first AdaGrad-style adaptive variant of the drift-plus-penalty algorithm. In fact, \Cref{alg1} applies to online convex optimization with adversarial constraints~\citep{yu-neely} and provides adaptive performance guarantees. We include this result in \Cref{sec:oco-adversarial}.
    \item Combining the estimation accuracy bounds on the Multi-level Montel Carlo method by~\cite{Levy22} and our adaptive regret and constraint violation bounds, we prove that the optimality gap under \Cref{alg1} is bounded by $\tilde O(\tmix^{1-\beta}T^{-\beta})$ while the feasibility gap is bounded by $\tilde O(\tmix^{(2\beta+1)/4}T^{-(1-\beta)/2} )$.
    \item In \Cref{sec:exp}, we provide numerical results from experiments on a classification problem with fairness constraints. Specifically, we take the logistic regression formulation proposed in~\cite{fairness-constraints-jmlr}. The numerical results on random problem instances demonstrate the efficacy of our proposed algorithmic frameworks for solving \ref{scso} with Markovian data.
\end{itemize}
The main component of our analysis is to provide bounds on the terms
$$\mathbb{E}\left[\sum_{t=1}^TQ_tg_t(\bmx)\right],\quad \mathbb{E}\left[\sum_{t=1}^T\frac{Q_t}{V_t}g_t(\bmx)\right],\quad \mathbb{E}\left[Q_t\right],\quad \mathbb{E}\left[\frac{Q_t}{V_t}\right]$$
under our Markovian data regime. All these terms involve the virtual queue size $Q_t$, and therefore, controlling the virtual queue size is crucial to guarantee a fast convergence rate. We include our proofs of the main theorems in \Cref{sec:edpp-proof,sec:mdpp-proof}. We include some of the known lemmas and tools for analyzing the drift-plus-penalty method due to \cite{OCO-stochastic} in \Cref{sec:adpp,sec:appendix-drift}. In fact, \Cref{sec:adpp} state these results for any adaptive version of the drift-plus-penalty algorithm, which uses time-varying parameters $V_t$ and $\alpha_t$.

\subsection{Related Work}

Although we have listed and explained important lines of previous work that motivate this paper, we supplement the list by mentioning a few more relevant results in stochastic approximation algorithms and Markov chain stochastic gradient methods.

\paragraph{Stochastic Approximation for Stochastic Optimization} Starting from the seminar paper by~\cite{10.1214/aoms/1177729586}, stochastic approximation algorithms, also known as stochastic gradient methods, have been a central topic of research in the domain of machine learning, operations research, and optimization \citep{doi:10.1080/17442508308833246,pflugbook,Ruszczynski1986,doi:10.1137/16M1080173}. There already exist numerous works on stochastic approximation algorithms for stochastic optimization. In particular, for \ref{scso} or stochastic optimization with expectation constraints, various stochastic approximation methods were proposed and studied by \cite{cooperative,Xiantao,JMLR:v21:19-1022,conservative,aprild,doi:10.1287/moor.2022.1257,doi:10.1287/ijoc.2022.1228}, as discussed earlier. 

We note that online convex optimization with stochastic constraints is a superclass of \ref{scso} under the i.i.d. data regime. \cite{long-term-1,long-term-2} developed augmented Lagrangian-based primal-dual algorithms for online convex optimization with deterministic long-term constraints. It turns out that these algorithms and their analysis can be adapted to the setting of stochastic constraints~\citep{conservative}. Later, \cite{OCO-stochastic,dpp-md} proposed the drift-plus-penalty algorithm that achieves better regret and constraint violation guarantees under Slater's condition. \cite{cumulative-1,cumulative-2,cumulative-3} also provided algorithms for online convex optimization with stochastic constraints.

\paragraph{Markov Chain Stochastic Gradient Descent}  

The asymptotic convergence of Markov chain SGD was studied in \cite{10.5555/560669,alma99752790902101,10.5555/95267} based on ordinary differential equation methods. \cite{4276978,4434888,johansson2010,doi:10.1137/080726380} developed incremental subgradient methods for the distributed optimization setting where there is a network of data centers and an algorithm performs a random walk over the network to obtain data samples. These methods are also referred to as token algorithms. More recently, \cite{9050511} devised what they called the walkman algorithm, which is a token algorithm based on an augmented Lagrangian method. \cite{pmlr-v162-sun22b,10.1109/JSAC.2023.3244250} considered adaptive variants of token algorithms. \cite{pmlr-v206-hendrikx23a} developed a more general framework for token algorithms that allows multiple tokens for improving communication efficiency and may adopt existing optimization tools such as variance reduction and acceleration. 

In addition to random walk-based token algorithms, there exist general Markov chain sampling frameworks for stochastic gradient methods. As mentioned earlier, \cite{emd,NEURIPS2018_1371bcce,doan2020convergence,Levy22,Markov-constrained-sto-opt} developed Markov chain SGD methods for convex and nonconvex stochastic optimization problems. Moreover, \cite{markov-bcd} developed a Markov chain sampling-based block coordinate descent method. \cite{10208129} proposed a decentralized variant of Markov chain SGD. \cite{NEURIPS2022_f61538f8} considered the stability of Markov chain SGD and deduced its generalization bounds. \cite{9769873} derived convergence guarantees on Markov chain SGD without a smoothness assusmption, and \cite{pmlr-v202-even23a} studied convergence of Markov chain SGD without the bounded gradient assumption.

\section{Preliminaries}\label{sec:prelim}

In this section, we provide problem formulations for stochastic-constrained stochastic and online convex optimization under the Markovian data sampling regime. In addition, \Cref{subsec:ergodic} gives the formal definition of the mixing time of a Markov chain. \Cref{subsec:notations} describes a list of assumptions considered throughout the paper. 

\subsection{Markov Chain and Mixing Time}\label{subsec:ergodic}
Given two probability distributions $\mathbb{P}, \mathbb{Q}$ over the probability space $(\mathcal{S},\mathcal{F})$, the total variation distance between them is defined as
$$\lVert \mathbb{P}-\mathbb{Q} \rVert_{TV}:=\sup_{A\in\mathcal{F}}\left|\mathbb{P}(A)-\mathbb{Q}(A)\right|.$$
Let $\{\bmxic\}_{t=1}^\infty$ be a time-homogeneous ergodic Markov chain with a finite state space $\mathcal{S}$. For a distribution $\nu$ over $(\mathcal{S},\mathcal{F})$, we denote by $\mathbb{P}^t(\nu,\cdot)$ the conditional probability distribution of $\bmxin$ given $\bm{\xi_1} \sim \nu$. %
Since $\{\bmxic\}_{t=1}^\infty$ is ergodic, it has a unique stationary distribution $\mu$, i.e., $\mathbb{P}^t(\mu,\cdot)=\mu(\cdot)$. 
In fact, the long-term distribution of the ergodic Markov chain converges to $\mu$ regardless of the initial distribution, which can be demonstrated as follows. It is known that $\mathbb{P}^t(\nu,\cdot)$ for any $t$ and  $\nu$ satisfies 
$$\lVert \mathbb{P}^t(\nu, \cdot)-\mu\rVert_{TV}\leq C\alpha^t$$
for some $\alpha\in (0,1)$ and $C>0$~\citep{levin2017markov}. Then we define quantities $d_{\text{mix}}$ and $\tmix(\epsilon)$ as follows.
\begin{align*}
    d_{\text{mix}}:=\sup_{\nu} \lVert P^t(\nu,\cdot)-\mu\rVert_{TV},\quad 
    \tmix(\epsilon):=\inf \{t\in\mathbb{N} : d_{\text{mix}}(t)\leq\epsilon\}.
\end{align*}
Moreover, following the convention, we define $\tmix$ as
$$\tmix:=\tmix(1/4),$$
and we refer to $\tmix$ as the \emph{mixing time} of the underlying Markov chain. It is known that
$d_{\text{mix}}(l\tau_{\text{mix}})\leq 2^{-l}$
for every $l\in\mathbb{N}$ \citep[Chapter 4]{levin2017markov}, which implies that
$$\tau_{\text{mix}}(\epsilon)\leq\lceil \log_2 \epsilon^{-1}\rceil\tau_{\text{mix}}.$$
In particular, throughout the paper, we will use quantity $\tau$ defined as
$$\tau := \tmix(1/T) = O (\tmix \log T)$$
where $T$ is the length of the time horizon.

\subsection{Stochastic-Constrained Online Convex Optimization with Markovian Data}\label{subsec:oco}

Let $\mathcal{X}\subset\mathbb{R}^d$ be a compact convex domain. Let $\{f_t:\mathcal{X}\to\mathbb{R}\}_{t=1}^\infty$ be a sequence of arbitrary convex functions. Another sequence of convex functions $\{g_t:\mathcal{X}\to\mathbb{R}\}_{t=1}^\infty$ is assumed to follow an ergodic Markov chain. More precisely, there exists a time-homogeneous ergodic Markov chain $\{\bmxic\}_{t=1}^\infty$ such that $g_t(\bmx)=g(\bmx,\bmxic)$. Let $$\bar{g}(\bmx)=\mathbb{E}_{\bmxi\sim \mu}[g(\bmx,\bmxi)]$$ for $\bmx\in\mathcal{X}$ where the expectation is taken with respect to the stationary distribution $\mu$ of the Markov chain. Then the problem is to solve and compute \begin{align*}
    \bmo\quad\in \quad\argmin_{\bmx\in\mathcal{X}}\quad  \sum_{t=1}^T f_t(\bmx)\quad \text{s.t.}\quad \bar{g}(\bmx)\leq 0.
\end{align*}
However, the information about the functions $\{f_t\}_{t=1}^T$ and $\{g_t\}_{t=1}^T$ is revealed \emph{online}. Basically, at each step $t$, we choose our decision $\bmc\in\mathcal{X}$ before observing $f_t, g_t$, after which we receive feedback about them. The stochastic setting studied in~\cite{OCO-stochastic} is that  constraint functions $g_1,\ldots, g_T$ are i.i.d., which means that $\bm{\xi_1},\ldots, \bm{\xi_T}$ are i.i.d., while we consider the case where constraint functions $g_1,\ldots, g_T$ follow an ergodic Markov chain and thus are dependent. That said, we may refer to the problem setting as \emph{online convex optimization with ergodic constraints}.

To measure the performance of a learning algorithm for the online optimization problem, we adopt the regret and cumulative constraint violation definition due to~\cite{OCO-stochastic}. The regret and cumulative constraint violation of an algorithm that generates solutions $\bm{x_1},\ldots, \bm{x_T}$ over $T$ time steps are given by
\begin{align*}
    \operatorname{Regret}(T) = \sum_{t=1}^T f_t(\bmc)-\sum_{t=1}^T f_t(\bmx^*),\quad \operatorname{Violation}(T) =\sum_{t=1}^T g_t(\bmc).
\end{align*}
We want these quantities to have a sublinear growth in $T$.
Note that the benchmark solution $\bmx^*$ satisfies the expectation constraint $\mathbb{E}_{\bmxi\sim\mu}[g(\bmx^*,\bmxi)]\leq 0$.

\subsection{Stochastic-Constrained Stochastic Optimization with Markovian Data}\label{subsec:scso}

Next, we consider the setting where the objective functions $\{f_t\}_{t=1}^\infty$ as well as the constraint functions $\{g_t\}_{t=1}^\infty$ are given by an ergodic Markov chain. Without loss of generality, we may assume that $f_t(\bmx)=f(\bmx,\bmxic)$ and $g_t(\bmx)=g(\bmx,\bmxic)$ for some time-homogeneous ergodic Markov chain $\{\bmxic\}_{t=1}^\infty$ with a stationary distribution $\mu$. As before, let $$\bar{f}(\bmx)=\mathbb{E}_{\bmxi\sim \mu}[f(\bmx,\bmxi)]$$and $\bar{g}(\bmx)=\mathbb{E}_{\bmxi\sim \mu}[g(\bmx,\bmxi)]$ for $\bmx\in\mathcal{X}$. Then the problem is to solve and compute
\begin{align*}
\bm{x^\#}\quad\in    \quad\argmin_{\bmx\in\mathcal{X}}\quad \bar{f}(\bmx)\quad \text{s.t.}\quad \bar{g}(\bmx)\leq 0.
\end{align*}
Here, we do not have direct access to $(\bar{f}, \bar{g})$, but we receive samples  $(f_t, g_t)$ which converge to $(\bar{f}, \bar{g})$ in expectation. 

For the performance measure of a learning algorithm for this problem, we consider the following standard notions of optimality gap and constraint violation. For a solution $\bmx\in\mathcal{X}$
\begin{align*}
    \operatorname{Gap}(\bmx)=\bar{f}(\bmx)-\bar{f}(\bm{x^\#}),\quad
    \operatorname{Infeasibility}(\bmx)=\bar{g}(\bm{x}).
\end{align*}
We want these quantities to approach $0$ as $T$ grows. Hereinafter, we refer to $\operatorname{Gap}(\bmx)$ and $\operatorname{Infeasibility}(\bmx)$ as the optimality gap and the feasibility gap, respectively.

In the special case where $\{\bmxic\}_{t=1}^\infty$ are i.i.d. samples drawn from $\mu$, solving stochastic-constrained online convex optimization would provide a solution to stochastic-constrained stochastic optimization. One can argue that
if $\bm{\xi_1},\ldots, \bm{\xi_T}$ are i.i.d., then 
\begin{align*}\operatorname{Gap}(\bm{\bar x_T})\leq \frac{1}{T}\mathbb{E}\left[\operatorname{Regret}(T)\right],\quad
\operatorname{Infeasibility}(\bm{\bar x_T})\leq \frac{1}{T}\mathbb{E}\left[\operatorname{Violation}(T)\right]
\end{align*}
where $\bm{\bar x_T}$ denotes the simple average of $\bm{x_1},\ldots, \bm{x_T}$: $$\bm{\bar x_T}=\frac{1}{T}\sum_{t=1}^T\bmc.$$
However, in contrast to the i.i.d. case, the above inequalities do not hold for the case of an arbitrary ergodic Markov chain. This is because the distribution of $\bmxin$ conditioned on $\bmxic$ is not equal to the stationary distribution $\mu$. 

On the other hand, we will use the fact that the long-term distribution of an ergodic Markov chain converges to its stationary distribution. Based on this observation, we first bound the expected regret and the expected constraint violation of the online problem, and then we use this to bound the optimality gap and the feasibility gap of the stochastic optimization problem.

\subsection{Notations and Assumptions}\label{subsec:notations}
We work over a norm $\lVert\cdot\rVert$ and its dual norm $\lVert\cdot\rVert_*$. We choose a convex mirror map $\Phi:\mathcal{C}\to \mathbb{R}$, where $\mathcal{C}\subseteq\mathbb{R}^d$ is a convex domain containing $\mathcal{X}$. We use the corresponding \textit{Bregman divergence} defined as
$$D(\bmx,\bmy)=\Phi(\bmx)-\Phi(\bmy)-\grad \Phi(\bmy)^\top(\bmx-\bmy).$$

\begin{assumption}\label{ass:bounded}
There is a constant $R>0$ such that $D(\bmx,\bmy)\leq R^2$ for any $\bmx,\bmy\in\mathcal{X}$, and $\Phi$ is $2$-strongly convex with respect to norm $\lVert\cdot\rVert$ i.e. $ \lVert\bmx-\bmy\rVert^2\leq D(\bmx,\bmy)$ for any $\bmx,\bmy\in\mathcal{X}$. Moreover, $f(\cdot, \bmxi)$ and $g(\cdot, \bmxi)$ are differentiable for each $\bmxi\in \mathcal{S}$.
\end{assumption}
We use notations $F_t, G_t, H_t$ for $t\in[T]$ given by
\begin{equation*}
    F_{t}:=\lVert\grad f_t(\bmc)\rVert_*, \ G_{t}:=\lVert\grad g_t(\bmc)\rVert_*, \ H_{t}:=|g_t(\bmc)|
\end{equation*}
which are parameters used in our adaptive algorithm. Due to the stochasticity of $f_t, g_t$, these are random variables.
We assume these quantities are bounded.
\begin{assumption}\label{ass:lip} There exist constants $F, G, H >0$ such that  
    \begin{align*}
        \lVert \grad_{\bmx} f(\bmx, \bmxi)\rVert_*\leq F, \ \lVert \grad_{\bmx} g(\bmx, \bmxi)\rVert_*\leq G, \ |g(\bmx, \bmxi)|\leq H
    \end{align*}
    for any $\bmx \in\mathcal{X}$ and $\bmxi\in\mathcal{S}$.
\end{assumption}
Assumptions~\ref{ass:bounded} and~\ref{ass:lip} are common in online convex optimization, stochastic optimization, and the Markov chain SGD literature. The following assumption is referred to as \emph{Slater's condition} or \emph{Slater's constraint qualification} for constrained optimization. 
\begin{assumption}[Slater's condition]\label{ass:slater}
    There exists  ${\bm{\hat x}}\in\mathcal{X}$ such that $\bar{g}(\bm{\hat x})\leq -\epsilon$ for some $\epsilon>0$.
\end{assumption}
For online convex optimization with stochastic i.i.d. constraint functions, Slater's condition leads to improvement in regret and constraint violation~\citep{OCO-stochastic}. In \Cref{sec:tmix}, we will show that even for online convex optimization with ergodic constraint functions, assuming that Slater's condition holds results in an improvement in the regret and constraint violation of \Cref{alg0}.

\section{Known Mixing Time}\label{sec:tmix}

We first focus on the setting where we have access to the mixing time $\tmix$ of the underlying Markov chain $\{\bmxic\}_{t=1}^\infty$. \cite{emd} studied this case for stochastic convex minimization without a stochastic functional constraint, for which they modified the step size of the stochastic gradient descent method based on the mixing time parameter $\tmix$. Inspired by this approach, we take and modify the drift-plus-penalty (DPP) algorithm due to~\cite{yu-neely,OCO-stochastic} developed for stochastic-constrained online convex optimization. Based on DPP, we develop our algorithm by setting the algorithm parameters properly to adapt to the mixing time $\tmix$.

\subsection{Ergodic Drift-Plus-Penalty}\label{sec:tmix-1}

The DPP algorithm has two parameters, $V$ and $\alpha$, where $V$ is the penalty parameter and $\alpha$ determines the step size. \cite{OCO-stochastic} set $V=\sqrt{T}$ and $\alpha= T$. In contrast to the vanilla DPP algorithm, our algorithm uses parameters
$$V_t=(\tmix t)^\beta,\quad \alpha_t = \tmix t$$
for iterations $t=1,\ldots, T$, where $T$ is the length of the horizon and $\beta$ is another algorithm parameter that controls the balance between the regret and the constraint violation. Our algorithm, which we call \emph{ergodic drift-plus-penalty (EDPP)}, is described in \Cref{alg0}.

\begin{algorithm}[tb]
		\caption{\textbf{E}rgodic \textbf{D}rift-\textbf{P}lus-\textbf{P}enalty (EDPP)}
		\label{alg0}
		\begin{algorithmic}
	 \STATE {\bfseries Initialize:} Initial iterates $\bm{x_1}\in\mathcal{X}$, $Q_1=0$, and $0<\beta\leq 1/2$.
   \FOR{$t=1$ {\bfseries to} $T$}
   \STATE Observe $f_t$ and $g_t$.
   \STATE Set penalty parameter $V_t$ and step size parameter $\alpha_t$ as $$V_t=(\tau_{\text{mix}}t)^\beta, \quad \alpha_t=\tau_{\text{mix}}t.$$
   \STATE {\bfseries Primal update:} Set $\bmn$ as
   $$\bmn = \argmin_{\bmx\in\mathcal{X}}\left\{\left(V_t\grad f_t(\bmc)+Q_t\grad g_t(\bmc)\right)^\top \bmx + \alpha_t D(\bmx,\bmc)\right\}$$
   \STATE {\bfseries Dual update:} Set $Q_{t+1}$ as
   $$Q_{t+1}=\left[Q_t + g_t(\bmc) +\grad g_t(\bmc)^\top(\bmn-\bmc) \right]_+$$
   \ENDFOR
		\end{algorithmic}
\end{algorithm}

Note that the mixing time $\tmix$ is now part of parameters $V_t$ and $\alpha_t$, as in the ergodic mirror descent algorithm by~\cite{emd}. Second, $V_t$ and $\alpha_t$ are time-varying, so our algorithm is \emph{adaptive}~\citep{long-term-2} and oblivious to the length of the time horizon $T$. One may wonder why we do not use $T$ instead of $t$, i.e., $V=(\tmix T)^\beta$ and $\alpha = \tmix T$. In fact, our numerical results, which will be presented in \Cref{sec:exp}, demonstrate that \Cref{alg0} with the adaptive parameters $V_t$ and $\alpha_t$ outperforms the algorithm with fixed parameters $V=(\tmix T)^\beta$ and $\alpha = \tmix T$. However, the performance analysis of the vanilla DPP algorithm by~\cite{OCO-stochastic} does not immediately extend to such adaptive parameters. Hence, we prove that the DPP framework with parameters of varying $t$ still achieves the desired regret and constraint violation guarantees. 

Let us also briefly explain how the DPP framework initially developed by~\cite{OCO-stochastic} as well as our \Cref{alg0} works. We may regard $Q_t$ as the size of a \emph{virtual queue} at time $t$. Then we consider the associated \emph{quadratic Lyapunov term} $L_t=Q_t^2/2$ and study the corresponding \emph{drift} given by
$\Delta_t = L_{t+1}-L_t = ({Q_{t+1}^2}-{Q_t^2})/2$. It is not difficult to see that $$\Delta_t\leq Q_t\left(g_t(\bmc)+\grad g_t(\bmc)^\top(\bmn-\bmc)\right)+\frac{1}{2}(H_t+G_tR)^2$$
holds~(Lemma~\ref{lemma:drift-bound}). Here, the upper bound on the drift $\Delta_t$ has term $Q_t\grad g_t(\bmc)^\top(\bmn-\bmc)$ that depends on the next iterate $\bmn$. Hence, by choosing $\bmn$ that minimizes $Q_t\grad g_t(\bmc)^\top(\bmn-\bmc)$, we may attempt to control the drift. In fact, the primal update sets $\bmn$ to be the minimizer of
\begin{align*}
	&\underbrace{Q_t\grad g_t(\bmc)^\top(\bmx-\bmc)}_{\text{drift}}+ \underbrace{V_t \grad f_t(\bmc)^\top(\bmx-\bmc) + \alpha_tD(\bmx,\bmc)}_{\text{penalty}}
\end{align*}
over $\mathcal{X}$. Consequently, at each iteration, we get to choose a solution that minimizes the drift term $\Delta_t$ and a penalty term for controlling the objective simultaneously.

\subsection{Performance Guarantees of Ergodic Drift-Plus-Penalty}

First, we analyze the regret and constraint violation of EDPP for the constrained online convex optimization setting under the Markovian sampling regime, formulated in \Cref{subsec:oco}. Recall that $\tmix=\tmix(1/4)$ and that parameter $\tau$ is defined as
$\tau=\tmix(T^{-1}),$
which satisfies
$\tau\leq \lceil\log_2 T\rceil \tmix$.

\begin{theorem}\label{thm1}
Suppose that Assumptions \ref{ass:bounded} and \ref{ass:lip} hold. Then for online convex optimization with ergodic constraints, \Cref{alg0} achieves
\begin{align*}
\mathbb{E}\left[\operatorname{Regret}(T)\right] &=O\left(\tmix^{-\beta}\tau T^{1-\beta}\right),\\
\mathbb{E}\left[\operatorname{Violation}(T)\right] &= O\left(\tmix^{{\beta}/{2}}T^{{(\beta+1)}/{2}}+\sqrt{(\tau-1)T}\right)
\end{align*}
where the expectation is taken with respect to the randomness in running the algorithm.
\end{theorem}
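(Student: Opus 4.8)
The plan is to run the drift-plus-penalty machinery of~\cite{OCO-stochastic} in an \emph{adaptive} form that tolerates the time-varying parameters $V_t=(\tmix t)^\beta$, $\alpha_t=\tmix t$, and to replace the independence arguments available in the i.i.d.\ setting by a mixing-time coupling as in ergodic mirror descent~\citep{emd}. The first step is a per-iteration inequality valid for an \emph{arbitrary} comparator $\bmy\in\mathcal{X}$: starting from the drift bound $\Delta_t\le Q_t\bigl(g_t(\bmc)+\grad g_t(\bmc)^\top(\bmn-\bmc)\bigr)+\tfrac12(H_t+G_tR)^2$ (\Cref{lemma:drift-bound}), I add $V_t\bigl(f_t(\bmc)-f_t(\bmy)\bigr)$ to both sides, use convexity of $f_t$ and $g_t$ to pass from $\bmc$ to $\bmy$, apply the three-point inequality for the primal proximal update (legitimate because $\alpha_t D(\cdot,\bmc)$ is $2\alpha_t$-strongly convex), and use $2$-strong convexity of $\Phi$ with Young's inequality to absorb the leftover $V_tF_t\|\bmn-\bmc\|$ term. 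This gives
$$\Delta_t+V_t\bigl(f_t(\bmc)-f_t(\bmy)\bigr)\ \le\ Q_tg_t(\bmy)+\alpha_t D(\bmy,\bmc)-\alpha_t D(\bmy,\bmn)+\frac{V_t^2F_t^2}{4\alpha_t}+\frac12(H_t+G_tR)^2,$$
which is precisely what the adaptive-DPP lemmas in the appendix provide.

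The Markovian data enter only through the term $\bbE[Q_sg_s(\bmo)]$, since the benchmark $\bmo$ satisfies $\bar g(\bmo)\le 0$ but not $g_s(\bmo)\le 0$ pointwise. For $s>\tau$ I split $\bbE[Q_sg_s(\bmo)]=\bbE[Q_{s-\tau}g_s(\bmo)]+\bbE[(Q_s-Q_{s-\tau})g_s(\bmo)]$ and exploit that $Q_{s-\tau}$ depends only on $\bm{\xi_1},\dots,\bm{\xi_{s-\tau-1}}$: conditioning on the data up to time $s-\tau$ and using $d_{\mathrm{mix}}(\tau)\le 1/T$ (this is why the window $\tau=\tmix(1/T)$ is the right one) together with $|g|\le H$ gives $\bbE[g_s(\bmo)\mid\mathcal{F}_{s-\tau}]\le\bar g(\bmo)+2H/T\le 2H/T$, hence $\bbE[Q_{s-\tau}g_s(\bmo)]\le\tfrac{2H}{T}\bbE[Q_{s-\tau}]$; the correction term is handled by the Lipschitzness of the queue, $|Q_s-Q_{s-\tau}|\le\tau(H+GR)$, and for $s\le\tau$ I use the crude bound $Q_s\le(s-1)(H+GR)$. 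The harder half of the argument is then to bound the queue. Summing the per-iteration inequality at $\bmy=\bmo$ over $s=1,\dots,t$, using $\sum_s\Delta_s=\tfrac12 Q_{t+1}^2$, Abel summation to get $\sum_s\alpha_s\bigl(D(\bmo,\bms)-D(\bmo,\bmsn)\bigr)\le\alpha_tR^2$ (the increments of $\alpha_s$ are constant), and $|f_s(\bmo)-f_s(\bms)|\le FR$, yields in expectation $\tfrac12\bbE[Q_{t+1}^2]\le\tfrac{2H}{T}\sum_{s\le t}\bbE[Q_{(s-\tau)_+}]+O\bigl(\tmix^\beta t^{1+\beta}+\tau t\bigr)$. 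This is self-referential, so I close it by induction: positing $\bbE[Q_s^2]\le A\,\Phi(s)$ with $\Phi(s)=\tmix^\beta s^{1+\beta}+\tau s+1$, one has $\bbE[Q_{(s-\tau)_+}]\le\sqrt{A\,\Phi(s)}$, so $\tfrac{2H}{T}\sum_{s\le t}\bbE[Q_{(s-\tau)_+}]\le 2H\sqrt{A\,\Phi(t)}$ (since $\sqrt\Phi$ is increasing and $t\le T$), and then $\bbE[Q_{t+1}^2]=O\bigl(\sqrt{A\,\Phi(t+1)}+\Phi(t+1)\bigr)\le A\,\Phi(t+1)$ for $A$ a large enough absolute constant. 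Hence $\bbE[Q_t^2]=O(\tmix^\beta t^{1+\beta}+\tau t)$ and, by Jensen, $\bbE[Q_t]=O(\tmix^{\beta/2}t^{(1+\beta)/2}+\sqrt{\tau t})$ for all $t\le T+1$.

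The two bounds now follow. For regret I divide the per-iteration inequality at $\bmy=\bmo$ by $V_t$ and sum: $\sum_t\Delta_t/V_t\ge 0$ (Abel summation, $V_t$ increasing, $L_1=0$) so it may be dropped; $\sum_t(\alpha_t/V_t)\bigl(D(\bmo,\bmc)-D(\bmo,\bmn)\bigr)\le(\alpha_T/V_T)R^2=(\tmix T)^{1-\beta}R^2$ by Abel summation; $\sum_t V_tF_t^2/(4\alpha_t)$ and $\sum_t(H_t+G_tR)^2/(2V_t)=O(\tmix^{-\beta}T^{1-\beta})$ are the deterministic budget; and the stochastic term, through the mixing estimate and the queue bound, contributes its dominant part $\sum_t H\tau(H+GR)/V_t=O(\tmix^{-\beta}\tau T^{1-\beta})$, everything else being lower order (here one uses $\tmix=O(T)$ and $\tau\ge\tmix$). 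This gives $\bbE[\operatorname{Regret}(T)]=O(\tmix^{-\beta}\tau T^{1-\beta})$. For constraint violation, from $Q_{t+1}\ge Q_t+g_t(\bmc)+\grad g_t(\bmc)^\top(\bmn-\bmc)$ and $\|\bmn-\bmc\|\le(V_tF_t+Q_tG_t)/\alpha_t$ (compare the primal update against $\bmc$) one gets $g_t(\bmc)\le Q_{t+1}-Q_t+G_t(V_tF_t+Q_tG_t)/\alpha_t$; telescoping and taking expectations, $\bbE[\operatorname{Violation}(T)]\le\bbE[Q_{T+1}]+O((\tmix T)^\beta/\tmix)+\tfrac{G^2}{\tmix}\sum_t\bbE[Q_t]/t$, and the queue bound makes $\bbE[Q_{T+1}]=O(\tmix^{\beta/2}T^{(\beta+1)/2}+\sqrt{\tau T})$ the dominant term, which, rewriting $\sqrt{\tau T}=O(\sqrt{(\tau-1)T})$, is exactly the claimed bound.

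I expect the main obstacle to be the queue bound in the second paragraph: it is genuinely circular because the mixing error $\tfrac{2H}{T}\sum_s\bbE[Q_{s-\tau}]$ reinjects $Q$ into its own estimate, so one must choose the right ansatz $\Phi$ and induction, treat the $s\le\tau$ boundary terms separately, and keep the $\tmix$-versus-$\tau$ bookkeeping tight enough that $\tau=\tmix(1/T)=O(\tmix\log T)$ appears only where advertised. A secondary, more mechanical difficulty is that the time-varying $V_t,\alpha_t$ destroy the clean telescoping of the fixed-parameter analysis of~\cite{OCO-stochastic}, which is why Abel summation is needed throughout Steps relating to the Bregman and Lyapunov terms.
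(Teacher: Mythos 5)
Your proposal is correct and follows essentially the same route as the paper: the per-iteration adaptive drift-plus-penalty inequality with Abel summation for the time-varying $V_t,\alpha_t$, the $\tau$-step shift plus total-variation coupling to control $\mathbb{E}[Q_sg_s(\bmx^*)]$ and $\mathbb{E}[(Q_s/V_s)g_s(\bmx^*)]$ (the paper's Lemmas~\ref{Q_t-bound1} and~\ref{Q_t-bound2}), and a self-referential queue estimate closed by induction (Lemma~\ref{lem:eq}), assembled exactly as in the paper's proof. The only deviations are cosmetic — you run the induction on $\mathbb{E}[Q_t^2]$ with a single ansatz rather than on $\mathbb{E}[Q_t]$, and you shift by $\tau$ instead of $\tau-1$, so your intermediate claim $\sqrt{\tau T}=O(\sqrt{(\tau-1)T})$ fails at $\tau=1$; this is harmless because the leftover $\sqrt{T}$ is absorbed by $\tmix^{\beta/2}T^{(\beta+1)/2}$, but it should be stated that way.
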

One of the key components of the analysis for proving \Cref{thm1} is that we bound the expected size of the virtual queue $Q_t$ at time $t$ as follows.
$$\mathbb{E}[Q_t]=O\left(\tmix^{\beta/2} T^{(\beta+1)/2} + \sqrt{(\tau-1) T}\right).$$
Another key part is to analyze the term
$$\sum_{t=1}^T\mathbb{E}\left[Q_t g_t(\bmx)\right]$$
where function $g_t$ is not independent of the virtual queue size $Q_t$ under our Markovian sampling regime. In contrast, if $g_1,\ldots, g_T$ were i.i.d., then $\mathbb{E}\left[Q_t g_t(\bmx)\right]=\mathbb{E}\left[Q_t \bar g(\bmx)\right]$ would hold. Instead, we relate the term with $$\sum_{t=1}^{T-\tau +1}\mathbb{E}\left[Q_t g_{t+\tau -1}(\bmx)\right]$$
and use the intuition that the distribution of the ergodic Markov chain after $\tau$ steps is close to its stationary distribution. We provide a bound on the term in Lemma~\ref{Q_t-bound1}. Similarly, we also need to analyze the term 
$$\sum_{t=1}^T\mathbb{E}\left[\frac{Q_t}{V_t} g_t(\bmx)\right],$$
and an upper bound on the sum is given in Lemma~\ref{Q_t-bound2}. Based on these observations, we provide an upper bound on the expected virtual queue size $\mathbb{E}\left[Q_t\right]$, which is given in Lemma~\ref{lem:eq}. Then we apply the performance analysis results of the general adaptive drift-plus-penalty method provided in \Cref{sec:adpp}. The complete proof of \Cref{thm1} is given in \Cref{sec:thm1proof}.

Next we analyze the performance of EDPP on the stochastic-constrained stochastic optimization problem under Markovian data sampling, whose formulation is given by~\eqref{scso}.
\begin{theorem}\label{thm2}
   Suppose that Assumptions \ref{ass:bounded} and \ref{ass:lip} hold. Then for stochastic-constrained stochastic optimization~\eqref{scso}, \Cref{alg0} guarantees that
    \begin{align*}
        \mathbb{E}\left[\operatorname{Gap}(\bm{\bar x_T})\right] &=O\left(\frac{\tau}{\tmix^\beta T^{\beta}}+\frac{\tau-1}{\tmix^{1-\beta/2}T^{{(1-\beta)}/{2}}}+\frac{(\tau-1)^{3/2}}{\tmix T^{1/2}}+\frac{\tau}{T}\right),\\
        \mathbb{E}\left[\operatorname{Infeasibility}(\bm{\bar x_T})\right] &=O\left(\frac{\tau}{\tmix^{1-\beta/2}T^{(1-\beta)/2}}+\frac{(\tau-1)^{3/2}}{\tmix T^{1/2}}+\frac{\tau}{T}\right)
    \end{align*}
    where $\bm{\bar x_T}= \sum_{t=1}^T \bmc/T$ and the expectation is taken with respect to the randomness in running the algorithm.
\end{theorem}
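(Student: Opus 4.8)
The plan is to deduce the stochastic-optimization guarantees from the online guarantees of \Cref{thm1}, paying for the fact that under Markovian sampling the iterate $\bmc$ is correlated with the freshly revealed functions $(f_t,g_t)$. First, since $f(\cdot,\bmxi)$ and $g(\cdot,\bmxi)$ are convex for every $\bmxi\in\mathcal{S}$, the averages $\bar f$ and $\bar g$ are convex, so Jensen's inequality applied to $\bm{\bar x_T}=\tfrac1T\sum_{t=1}^T\bmc$ gives
\begin{align*}
\bbE\left[\operatorname{Gap}(\bm{\bar x_T})\right]\le \frac1T\sum_{t=1}^T\bbE\left[\bar f(\bmc)-\bar f(\bm{x^\#})\right],\qquad \bbE\left[\operatorname{Infeasibility}(\bm{\bar x_T})\right]\le \frac1T\sum_{t=1}^T\bbE\left[\bar g(\bmc)\right].
\end{align*}
Thus it suffices to compare $\sum_t\bbE[\bar f(\bmc)-\bar f(\bm{x^\#})]$ with $\bbE[\operatorname{Regret}(T)]$ and $\sum_t\bbE[\bar g(\bmc)]$ with $\bbE[\operatorname{Violation}(T)]$.

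The obstruction, as noted in \Cref{subsec:scso}, is that $\bbE[g_t(\bmc)\mid\mathcal{F}_{t-1}]\ne\bar g(\bmc)$: $\bmc$ and $\bm{\xi_t}$ are only one Markov step apart. To insert $\tau$ mixing steps I test the \emph{stale} iterate $\bm{x_{t-\tau+1}}$, which is measurable with respect to $\mathcal{F}_{t-\tau}$ (the $\sigma$-algebra of the first $t-\tau$ samples and the algorithm's randomness), against $g_t$: given $\mathcal{F}_{t-\tau}$ the conditional law of $\bm{\xi_t}$ lies within $d_{\text{mix}}(\tau)\le 1/T$ of $\mu$ in total variation. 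Combined with $|g(\bmx,\bmxi)|\le H$ and $|f(\bmx,\bmxi)-f(\bm{x^\#},\bmxi)|\le F\lVert\bmx-\bm{x^\#}\rVert\le FR$ (Assumptions~\ref{ass:bounded} and~\ref{ass:lip}), this yields $|\bbE[g_t(\bm{x_{t-\tau+1}})]-\bbE[\bar g(\bm{x_{t-\tau+1}})]|\le 2H/T$ and the analogous $2FR/T$ bound for $f_t(\bm{x_{t-\tau+1}})-f_t(\bm{x^\#})$ versus $\bar f(\bm{x_{t-\tau+1}})-\bar f(\bm{x^\#})$. Summing over $t=\tau,\dots,T$, reindexing $s=t-\tau+1$, bounding the $\tau-1$ leftover boundary terms crudely by $H$ (resp. $FR$), then passing from $\bm{x_{t-\tau+1}}$ to $\bmc$ via $G$- (resp. $F$-) Lipschitzness together with $\lVert\bm{x_{t-\tau+1}}-\bmc\rVert\le\sum_{j=t-\tau+1}^{t-1}\lVert\bm{x_j}-\bm{x_{j+1}}\rVert$, and using that $\bm{x^\#}$ is feasible for the online benchmark $\bmo$ (so $\sum_t(f_t(\bmc)-f_t(\bm{x^\#}))\le\operatorname{Regret}(T)$) while $\sum_t g_t(\bmc)=\operatorname{Violation}(T)$, I obtain
\begin{align*}
\sum_{t=1}^T\bbE\left[\bar f(\bmc)-\bar f(\bm{x^\#})\right]&\le \bbE\left[\operatorname{Regret}(T)\right]+F(\tau-1)\sum_{j=1}^{T-1}\bbE\lVert\bm{x_j}-\bm{x_{j+1}}\rVert+O(\tau FR),\\
\sum_{t=1}^T\bbE\left[\bar g(\bmc)\right]&\le \bbE\left[\operatorname{Violation}(T)\right]+G(\tau-1)\sum_{j=1}^{T-1}\bbE\lVert\bm{x_j}-\bm{x_{j+1}}\rVert+O(\tau H).
\end{align*}

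Next I bound the total iterate movement: the optimality condition of the primal update and the $2$-strong convexity of $\Phi$ give $\lVert\bm{x_j}-\bm{x_{j+1}}\rVert=O((V_jF+Q_jG)/\alpha_j)$, and substituting $V_j=(\tmix j)^\beta$, $\alpha_j=\tmix j$, the queue-size bound $\bbE[Q_j]=O(\tmix^{\beta/2}T^{(\beta+1)/2}+\sqrt{(\tau-1)T})$ proved for \Cref{thm1}, and $\sum_{j\le T}j^{\beta-1}=O(T^\beta)$, $\sum_{j\le T}j^{-1}=O(\log T)$ gives $\sum_{j=1}^{T-1}\bbE\lVert\bm{x_j}-\bm{x_{j+1}}\rVert=O(\tmix^{\beta-1}T^\beta)+\tilde O(\tmix^{\beta/2-1}T^{(\beta+1)/2}+\tmix^{-1}\sqrt{(\tau-1)T})$. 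Dividing the two displayed inequalities by $T$, inserting $\tfrac1T\bbE[\operatorname{Regret}(T)]=O(\tmix^{-\beta}\tau T^{-\beta})$ and $\tfrac1T\bbE[\operatorname{Violation}(T)]=O(\tmix^{\beta/2}T^{(\beta-1)/2}+\sqrt{(\tau-1)/T})$ from \Cref{thm1}, multiplying the movement bound by $F(\tau-1)/T$ (resp. $G(\tau-1)/T$), and absorbing the resulting lower-order terms using $\beta\le1/2$ and $\tmix\le\tau=O(\tmix\log T)$, yields precisely the stated bounds on $\bbE[\operatorname{Gap}(\bm{\bar x_T})]$ and $\bbE[\operatorname{Infeasibility}(\bm{\bar x_T})]$.

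The main obstacle is exactly this breakdown of the i.i.d. identity $\bbE[g_t(\bmc)]=\bbE[\bar g(\bmc)]$: the $\tau$-staleness fix couples the argument to $\sum_j\lVert\bm{x_j}-\bm{x_{j+1}}\rVert$, which in turn depends on the virtual-queue size, so the proof must reuse the queue bound behind \Cref{thm1} rather than black-boxing its regret/violation statements. Carefully tracking the powers of $\tmix$ and $\tau$ in this movement term — the source of the $(\tau-1)$ and $(\tau-1)^{3/2}$ factors and of the $\tmix^{\beta/2-1}$-type terms — is the delicate bookkeeping; beyond that the argument is a routine repackaging of \Cref{thm1}.
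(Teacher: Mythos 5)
Your proof is correct and follows essentially the same route as the paper's: a $\tau$-step shift to exploit mixing (you pair the stale iterate $\bm{x_{t-\tau+1}}$ with the current function where the paper pairs the current iterate with the forward-shifted function $g_{t+\tau-1}$ — the same pairing after reindexing), control of the resulting iterate drift via the per-step movement bound $\lVert\bm{x_j}-\bm{x_{j+1}}\rVert=O((V_jF+Q_jG)/\alpha_j)$ together with the virtual-queue bound, an appeal to the regret/violation bounds of \Cref{thm1} for the aligned sum, and $O(\tau)$ boundary terms. The one blemish is the spurious $\log T$ you pick up by pairing the $T$-uniform queue bound with $\sum_{j\le T}j^{-1}$; using instead the time-varying bound $\mathbb{E}[Q_j]=O\bigl(\tmix^{\beta/2}j^{(\beta+1)/2}+\sqrt{(\tau-1)j}\bigr)$ from Lemma~\ref{lem:eq} (so that $\sum_j \mathbb{E}[Q_j]/(\tmix j)$ sums without a logarithm) removes it and recovers the stated $O(\cdot)$ rates.
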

Note that \Cref{thm1} implies
$$\frac{1}{T}\mathbb{E}\left[\operatorname{Regret}(T)\right] =O\left(\frac{\tau}{\tmix^\beta T^{\beta}}\right),$$
but the bound on the optimality gap given in \Cref{thm2} has additional terms due to the difference between the stationary distribution of the ergodic Markov chain and the distribution of $f_{t+1}$ conditioned on $f_t$. In fact, under  Markovian sampling, we have 
$$\sum_{t=1}^T\mathbb{E}\left[f_t(\bmc)\right]\neq \sum_{t=1}^T\mathbb{E}\left[\bar f (\bmc)\right].$$
Here, to get around this issue, we also use the intuition that 
$\mathbb{E}\left[\bar f(\bmc)\right]$ is close to $\mathbb{E}\left[f_{t+\tau -1}(\bmc)\right]$ as the distribution of the Markov chain after $\tau$ steps is close to its stationary distribution. The proof of \Cref{thm2} is given in \Cref{sec:thm2proof}. Moreover, since $\tau=\tilde{O}(\tmix)$, it follows that
\begin{align*}
\mathbb{E}\left[\operatorname{Regret}(T)\right] &=\tilde{O}\left(\tmix^{1-\beta} T^{1-\beta}\right),\quad
\mathbb{E}\left[\operatorname{Violation}(T)\right] = \tilde{O}\left(\tmix^{{\beta}/{2}}T^{{(\beta+1)}/{2}}+\sqrt{\tmix T}\right),\\
\mathbb{E}\left[\operatorname{Gap}(\bm{\bar x_T})\right] &=\tilde{O}\left(\frac{\tmix^{1-\beta}}{T^{\beta}}+\frac{\tmix^{{\beta}/{2}}}{T^{{(1-\beta)}/{2}}}+\frac{\tmix}{T}\right),\\
 \mathbb{E}\left[\operatorname{Infeasibility}(\bm{\bar x_T})\right] &=\tilde{O}\left(\frac{\tmix^{\beta/2}}{T^{(1-\beta)/2}}+\frac{\sqrt{\tmix}}{\sqrt{T}}+\frac{\tmix}{T}\right)
 \end{align*}
 where the $\tilde O$ hides a $\log T$ factor. In particular, if we set $\beta = 1/3$, then we have $\mathbb{E}\left[\operatorname{Regret}(T)\right]=\tilde{O}(\tmix^{2/3} T^{2/3})$, 
$\mathbb{E}\left[\operatorname{Violation}(T)\right] = O(\tmix^{1/6}T^{2/3}+\tmix^{1/2}T^{1/2})$, 
$\mathbb{E}\left[\operatorname{Gap}(\bm{\bar x_T})\right]=\tilde{O}(\tmix^{2/3} T^{-1/3} + \tmix T^{-1})$, and 
$\mathbb{E}\left[\operatorname{Infeasibility}(\bm{\bar x_T})\right] = O(\tmix^{1/6}T^{-1/3} + \tmix^{1/2} T^{-1/2} + \tmix T^{-1})$.
Furthermore, observe that if $\{\bmxic\}_{t=1}^\infty$ is a sequence of i.i.d. random variables, then we have $\tmix=\tau=1$. In this case, by \Cref{thm1,thm2}, \Cref{alg0} guarantees that
$\mathbb{E}\left[\operatorname{Regret}(T)\right]=O(T^{1-\beta})$, $\mathbb{E}\left[\operatorname{Violation}(T)\right]=O(T^{(\beta+1)/2})$, $\mathbb{E}\left[\operatorname{Gap}(\bm{\bar x_T})\right]=O({T^{-\beta}})$, and $\mathbb{E}\left[\operatorname{Infeasibility}(\bm{\bar x_T})\right]=O({T^{-(1-\beta)/2}})$ for any $\beta\in(0,1/2]$, which recovers the result of~\cite{long-term-2}.

If we further assume that Slater's constraint qualification holds, we can argue that we get a better control on the size of $\mathbb{E}[Q_t]$. Note that the upper bound on the expected queue size given by Lemma~\ref{lem:eq} is $\mathbb{E}[Q_t]=O(\tmix^{\beta/2} t^{(\beta+1)/2})$ which holds regardless of whether Slater's condition holds or not. On the other hand, we will argue that under Slater's condition, we have
$$\mathbb{E}[Q_t]= O\left(\frac{\tau(\tau+t)}{\sqrt{\tmix t}}\right)=\tilde O(\sqrt{\tmix t}).$$
This is consistent with~\cite{OCO-stochastic} as they proved that $\mathbb{E}[Q_t]=O(\sqrt{T})$ for the i.i.d. setting. In fact, our proof for bounding $\mathbb{E}[Q_t]$ is more involved than the argument of~\cite{OCO-stochastic} because we use adaptive step sizes for \Cref{alg0} and consider non-i.i.d. constraint functions. This leads to improvements as stated in the following result.

\begin{theorem}\label{thm3}
Suppose that Assumptions \ref{ass:bounded}--\ref{ass:slater} hold. Then for online convex optimization with ergodic constraints and stochastic-constrained stochastic optimization, \Cref{alg0} with $\beta = 1/2$ guarantees 
\begin{align*}
\mathbb{E}\left[\operatorname{Regret}(T)\right]&=O\left(\frac{\tau\sqrt{T}}{\sqrt{\tmix}}\right),\quad \mathbb{E}\left[\operatorname{Violation}(T)\right]=O\left(\frac{\tau(\tau+T)}{\sqrt{\tmix T}}\right),\\
 \mathbb{E}[\operatorname{Gap}(\bm{\bar x_T})]&=O\left(\frac{\tau^{2}}{\tmix^{3/2}\sqrt{T}}+\frac{\tau^{5/2}}{\tmix^{3/2}T}\right),\\
    \mathbb{E}[\operatorname{Infeasibility}(\bm{\bar x_T})]&=O\left(\frac{\tau^{2}}{\tmix^{3/2}\sqrt{T}} + \frac{\tau^{5/2}}{\tmix^{3/2}T}+\frac{\tau^2}{\tmix^{1/2} T^{3/2}}\right),
\end{align*}
where the expectation is taken with respect to the randomness in running the algorithm.
\end{theorem}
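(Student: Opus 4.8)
The plan is to reduce the whole statement to one new estimate: a Slater-improved bound on the virtual queue, $\bbE[Q_t]=O\!\big(\tau(\tau+t)/\sqrt{\tmix t}\big)$, which strengthens the unconditional bound of \Cref{lem:eq}. Granting this, the four inequalities follow exactly as \Cref{thm1,thm2} were obtained: the regret and violation from the generic adaptive drift-plus-penalty analysis of \Cref{sec:adpp} together with \Cref{Q_t-bound1,Q_t-bound2}, now specialized to $\beta=1/2$ and fed the sharper queue estimate; and the optimality and feasibility gaps from the regret and violation bounds through the same ergodicity-based reduction as for \Cref{thm2}. So there are two tasks: (i) prove the improved queue bound, and (ii) redo the bookkeeping of \Cref{thm1,thm2}.

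For (i), following the i.i.d.\ argument of \cite{OCO-stochastic} I would evaluate the primal optimality condition at the Slater point $\bmh$ of \Cref{ass:slater} instead of at $\bmo$; combined with convexity of $g_t$, Young's inequality, and \Cref{lemma:drift-bound}, this gives, for every $t$,
\[
\Delta_t\ \le\ Q_t g_t(\bmh)+V_t\big(f_t(\bmh)-f_t(\bmc)\big)+\alpha_t\big(D(\bmh,\bmc)-D(\bmh,\bmn)\big)+\tfrac{V_t^2F_t^2}{2\alpha_t}+\tfrac12(H_t+G_tR)^2 .
\]
Summing this per step is obstructed by two things: with $\alpha_t=\tmix t$ the Bregman terms do not telescope per step, and under Markovian sampling $g_t(\bmh)$ is correlated with $Q_t$, so it cannot be replaced by $\bar g(\bmh)\le-\epsilon$. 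I would handle both by studying the drift of $Q_t^2/2$ over a window $[t,t+L-1]$: over a window the Bregman terms telescope to $O(\alpha_{t+L}R^2)$, and — provided $L$ is a large enough multiple of $\tau$, so that all but the first $\tau$ terms of the window have almost mixed — the mixing estimate behind \Cref{Q_t-bound1}, applied with $\bmx=\bmh$, gives $\sum_{s=t}^{t+L-1}\bbE[Q_sg_s(\bmh)\mid\mathcal F_t]\le-\tfrac{\epsilon}{2}LQ_t+O(L^2)$. Using $|Q_{s+1}-Q_s|\le H+GR$ and the elementary inequality $2Q_t(Q_{t+L}-Q_t)\le Q_{t+L}^2-Q_t^2$, these combine to give $\bbE[Q_{t+L}-Q_t\mid\mathcal F_t]\le-\tfrac{\epsilon}{4}L$ whenever $Q_t$ exceeds a threshold of order $\epsilon^{-1}\big(V_{t+L}+\alpha_{t+L}/L+L\big)$; choosing $L=\Theta(\max\{\tau,\sqrt{\tmix t}\})$ balances $\alpha_{t+L}/L$ against $V_{t+L}$, both of order $\sqrt{\tmix t}$, and makes the threshold $O\big(\tau(\tau+t)/\sqrt{\tmix t}\big)$. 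A drift lemma of the kind in \Cref{sec:appendix-drift} then turns this negative-drift property into the claimed moment bound on $\bbE[Q_t]$. This windowed drift argument — and in particular letting the window length depend on $t$ so as to reconcile the adaptive Bregman telescoping with the mixing lag — is where essentially all the new work lies, and it is the step whose constants carry the dependence on $\epsilon,H,G,R$; it is the main obstacle.

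Task (ii) then parallels \Cref{thm1,thm2}. For regret, divide the drift-plus-penalty inequality (benchmark $\bmo$) by $V_t$ and sum: $-\sum_t\Delta_t/V_t\le0$ by Abel summation since $V_t$ is nondecreasing, $\sum_tV_tF_t^2/(2\alpha_t)$ and $\sum_t(H_t+G_tR)^2/(2V_t)$ are $O(\sqrt{T/\tmix})$, the telescoped term $\sum_t(\alpha_t/V_t)\big(D(\bmo,\bmc)-D(\bmo,\bmn)\big)$ is $O(\sqrt{\tmix T})$ since $\alpha_t/V_t=\sqrt{\tmix t}$, and $\sum_t\bbE[(Q_t/V_t)g_t(\bmo)]$ is controlled by \Cref{Q_t-bound2} using $\bar g(\bmo)\le0$ and $\bbE[Q_t]/V_t=\tilde O(1)$ (its reindexing error contributing the extra $\tau$ factor); this yields $\bbE[\operatorname{Regret}(T)]=O(\tau\sqrt T/\sqrt{\tmix})$. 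For violation, use $g_t(\bmc)\le Q_{t+1}-Q_t+G_t\|\bmc-\bmn\|$ together with the step-size control $\|\bmc-\bmn\|\le(V_tF_t+Q_tG_t)/\alpha_t$ (from the primal optimality condition at $\bmx=\bmc$) to get $\operatorname{Violation}(T)\le Q_{T+1}+\sum_tG_t(V_tF_t+Q_tG_t)/\alpha_t$, then take expectations and insert $\alpha_t=\tmix t$, $V_t=\sqrt{\tmix t}$, and the queue bound, giving $\bbE[\operatorname{Violation}(T)]=O(\tau(\tau+T)/\sqrt{\tmix T})$. For the gaps, by convexity of $\bar f$ and $\bar g$ it suffices to bound $\tfrac1T\sum_t\bbE[\bar f(\bmc)]-\bar f(\bm{x^\#})$ and $\tfrac1T\sum_t\bbE[\bar g(\bmc)]$; since $\bm{x^\#}$ is feasible for the online problem, the first is at most $\tfrac1T\bbE[\operatorname{Regret}(T)]$ plus the ergodicity error of replacing $\sum_t\bbE[f_t(\bmc)]$ by $\sum_t\bbE[f_{t+\tau-1}(\bmc)]$ and then by $\sum_t\bbE[\bar f(\bmc)]$ — the first replacement costing an \emph{solution-drift} term $F\sum_t\bbE\|\bm{x}_{t-\tau+1}-\bmc\|$, bounded via $\|\bm{x}_s-\bm{x}_{s+1}\|\le(V_sF_s+Q_sG_s)/\alpha_s$ and the queue bound, and the second an $O(\tau/T)$-per-step total-variation term — and likewise for $\bar g$ with $\operatorname{Violation}(T)$ in place of $\operatorname{Regret}(T)$; substituting the three bounds just derived and simplifying with $\tau=\tilde O(\tmix)$ and $\tau\ge\tmix$ gives the stated optimality and feasibility gaps.
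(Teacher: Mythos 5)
Your proposal is correct and follows essentially the same route as the paper: the paper also reduces everything to the Slater-improved queue bound $\mathbb{E}[Q_t]=O\big(\tau(\tau+t)/\sqrt{\tmix t}\big)$, obtained from a windowed conditional drift condition at the Slater point (Lemma~\ref{slater-drift-ergodic}, with window $t_0=2\tau+\lceil\sqrt{\tmix t}\rceil$ chosen per target time) fed into the time-varying-threshold drift lemma of \Cref{sec:appendix-drift}, and then reruns the \Cref{thm1}/\Cref{thm2} bookkeeping with $\beta=1/2$ exactly as you describe.
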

Our analysis takes into account the time-varying algorithm parameters $V_t$ and $\alpha_t$ as well as the fact that the functions are correlated according to a Markov chain. To consider this, we prove Lemmas \ref{lem:process} and \ref{slater-drift-ergodic} that lead to time-varying bounds on the expected virtual queue size. The complete proof of \Cref{thm3} is included in \Cref{sec:thm3proof}.
Since $\tau = \tilde O(\tmix)$, 
\begin{align*}
\mathbb{E}\left[\operatorname{Regret}(T)\right]&=\tilde O\left({\sqrt{\tmix T}}\right),\quad \mathbb{E}\left[\operatorname{Violation}(T)\right]=\tilde O\left(\sqrt{\tmix T}+\frac{\tmix^{3/2}}{\sqrt{T}}\right),\\
 \mathbb{E}[\operatorname{Gap}(\bm{\bar x_T})]&=\tilde O\left(\frac{\sqrt{\tmix}}{\sqrt{T}}+\frac{\tmix}{T}\right),\quad \mathbb{E}[\operatorname{Infeasibility}(\bm{\bar x_T})]=\tilde O\left(\frac{\sqrt{\tmix}}{\sqrt{T}} + \frac{\tmix}{T}+\frac{\tmix^{3/2}}{ T^{3/2}}\right).
\end{align*}

\section{Unknown Mixing Time}\label{sec:unknown}

Next, we study the setting where the mixing time $\tmix$ is not observable. Even if we do not know the mixing time of the underlying Markov chain, we would still want to provide a learning algorithm that provides performance guarantees of similar orders. To achieve this goal, we develop yet another variant of the drift-plus-penalty algorithm which incorporates the multi-level Monte Carlo (MLMC) gradient estimation scheme~\citep{giles_2015,blanchet-WSC,Levy22}. \cite{Levy22} first introduced the approach of combining stochastic gradient descent with the MLMC gradient estimation framework for stochastic optimization with no stochastic functional constraint. 

For the case of known mixing time, we may update the step size based on the mixing time $\tmix$ to achieve an optimal dependence on the parameter. When $\tmix$ is not known, \cite{Levy22} used AdaGrad-based adaptive step sizes~\citep{adagrad,levy2017,ward2019}. We take this idea to develop an AdaGrad variant of the drift-plus-penalty algorithm for \ref{scso}, described in~\Cref{alg1}, that incorporates the MLMC gradient estimation framework. The AdaGrad version of DPP itself is of independent interest. 

\subsection{Multi-Level Monte Carlo Sampling}

The idea behind the multi-level Monte Carlo estimation scheme is to obtain many consecutive samples from an ergodic Markov chain and take their average. At the same time, we may control the expected number of consecutive samples required for each time step by $O(\log T)$.

More precisely, for each time step $t$, we $N_t$ sample $\bm{\xi_t^{(1)}},\ldots, \bm{\xi_t^{(N_t)}}$ where $N_t$ itself is a random variable given by $$N_t=\begin{cases}
    \tilde{N}_t, &\text{if }\tilde{N}_t\leq T^{2}\\
    1, &\text{otherwise}
\end{cases}$$ and $\tilde{N}_t=2^{J_t}$ with $J_t\sim \operatorname{Geom}({1}/{2})$. Note that in our case, the condition is that $\tilde{N}_t\leq T^{2}$ where the bound on $\tilde N_t$ is $T^2$, while it was set to $T$ in~\cite{Levy22}. With this sampling strategy, we define $\mathcal{F}_t$ as the $\sigma$-field
$$\mathcal{F}_t=\sigma\left(\left\{ N_1, \ldots, N_t\right\}\cup \bigcup_{s=1}^t\left\{\bm{\xi_s^{(1)}},\ldots, \bm{\xi_s^{(N_s)}}\right\}\right).$$
Let $\mathbb{E}_t\left[\cdot\right]$ denote the conditional expectation with respect to $\mathcal{F}_t$, i.e., $\mathbb{E}_t\left[\cdot\right]=\mathbb{E}\left[\cdot \mid \mathcal{F}_t\right]$.
Next, for $t\geq 1$ and $N\geq1$, we define
$$f_t^N(\bmx):=\frac{1}{N}\sum_{i=1}^{N}f(\bmx, \bm{\xi_t^{(i)}}),\quad g_t^N(\bmx):=\frac{1}{N}\sum_{i=1}^{N}g(\bmx, \bm{\xi_t^{(i)}}).$$
Based on this, we define the MLMC estimators of $f$ and $g$ as follows.
\begin{equation*}
    (f_t,g_t)=(f_t^1,g_t^1)+
    \begin{cases}
        N_t\left((f_t^{N_t},g_t^{N_t})-(f_t^{N_t/2},g_t^{N_t/2})\right), &\text{if }N_t > 1\\
        0, &\text{otherwise}.
    \end{cases}
\end{equation*}
Basically, functions $f_t$ and $g_t$ are obtained after applying the MLMC estimation scheme to the underlying ergodic Markov chain. One thing to note, however, is that  $f_t$ and $g_t$ are not necessarily convex anymore~\citep{Levy22}. To remedy this issue, what we can argue instead is that $\mathbb{E}_{t-1}[f_t]$ and $\mathbb{E}_{t-1}[g_t]$ are convex. Based on~\citep[Lemma 3.1]{Levy22}, we deduce the following lemma.
\begin{lemma}\label{lem:MLMC}
    Let $j_{\max}:=\max\left\{j\in\mathbb{N}:2^j\leq T^{2}\right\}=\lfloor 2\log_2 T\rfloor$. Then for each $t$, 
    \begin{align*}
        &\mathbb{E}_{t-1}[f_t]=\mathbb{E}_{t-1}\left[f_t^{2^{j_{\max}}}\right],\quad   \mathbb{E}_{t-1}[\grad f_t]=\mathbb{E}_{t-1}\left[\grad f_t^{2^{j_{\max}}}\right],\\
       &\mathbb{E}_{t-1}[g_t]=\mathbb{E}_{t-1}\left[g_t^{2^{j_{\max}}}\right],\quad\mathbb{E}_{t-1}[\grad g_t]=\mathbb{E}_{t-1}\left[\grad g_t^{2^{j_{\max}}}\right].
    \end{align*}
Moreover, we have
     \begin{equation*}
       \mathbb{E}\left[\|\grad f_t(\bmc)\|_*^2\right] = \tilde{O}( F^2 \tau_{\text{mix}}),\quad \mathbb{E}\left[\|\grad g_t(\bmc)\|_*^2\right] = \tilde{O}(G^2\tau_{\text{mix}}),\quad  \mathbb{E}\left[| g_t(\bmc)|^2\right] = \tilde{O}(H^2\tau_{\text{mix}}).
    \end{equation*}
Lastly, the expected number of samples for time step $t$ satisfies $\mathbb{E}[N_t]\leq 2\log_2 T +1$.
\end{lemma}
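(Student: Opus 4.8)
The plan is to derive all three groups of claims from the structure of the multi-level Monte Carlo (MLMC) estimator as analyzed in \cite{Levy22}, adapting the argument to the truncation level $T^2$ in place of $T$, to the presence of the constraint function $g$, and to evaluation of the estimators at the (random, $\mathcal{F}_{t-1}$-measurable) iterate $\bmc$. I would treat the three parts in turn.

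\emph{Telescoping identities.} Fix $t$ and condition on $\mathcal{F}_{t-1}$. Since $2^{j_{\max}}\le T^2<2^{j_{\max}+1}$, we have $N_t=2^{J_t}$ on $\{J_t\le j_{\max}\}$ and $N_t=1$ on $\{J_t>j_{\max}\}$, and $J_t$ is independent of the chain and of $\mathcal{F}_{t-1}$. Because the step-$t$ samples are drawn consecutively along the chain, the law of the first $k$ of them does not depend on the block length, so $\mathbb{E}_{t-1}[\,f_t^{2^j}-f_t^{2^{j-1}}\mid J_t=j\,]=\mathbb{E}_{t-1}[f_t^{2^j}]-\mathbb{E}_{t-1}[f_t^{2^{j-1}}]$. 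Using $\mathbb{P}(J_t=j)\cdot 2^j=1$, the MLMC correction contributes $\sum_{j=1}^{j_{\max}}\big(\mathbb{E}_{t-1}[f_t^{2^j}]-\mathbb{E}_{t-1}[f_t^{2^{j-1}}]\big)$, which telescopes to $\mathbb{E}_{t-1}[f_t^{2^{j_{\max}}}]-\mathbb{E}_{t-1}[f_t^1]$; adding back the level-$1$ term gives $\mathbb{E}_{t-1}[f_t]=\mathbb{E}_{t-1}[f_t^{2^{j_{\max}}}]$. This is exactly \citep[Lemma 3.1]{Levy22} with $j_{\max}=\lfloor 2\log_2 T\rfloor$ in place of $\lfloor\log_2 T\rfloor$, so I would simply invoke it. Since $\grad f_t^N(\bmx)=\frac1N\sum_{i=1}^N\grad f(\bmx,\bm{\xi_t^{(i)}})$, the MLMC operator commutes with $\grad$, which yields the gradient identities; the identities for $g$ are verbatim.

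\emph{Second-moment bounds.} On $\{J_t\le j_{\max}\}$ one has $\grad f_t(\bmc)=\grad f_t^1(\bmc)+2^{J_t}\big(\grad f_t^{2^{J_t}}(\bmc)-\grad f_t^{2^{J_t-1}}(\bmc)\big)$, and on $\{J_t>j_{\max}\}$ one has $\grad f_t(\bmc)=\grad f_t^1(\bmc)$. Applying $\|a+b\|_*^2\le 2\|a\|_*^2+2\|b\|_*^2$, taking $\mathbb{E}_{t-1}[\cdot]$, and using $\mathbb{P}(J_t=j)\,4^j=2^j$ reduces the bound to
\[
\mathbb{E}_{t-1}\big[\|\grad f_t(\bmc)\|_*^2\big]\le 2F^2+2\sum_{j=1}^{j_{\max}}2^{j}\,\mathbb{E}_{t-1}\big[\|\grad f_t^{2^j}(\bmc)-\grad f_t^{2^{j-1}}(\bmc)\|_*^2\big].
\]
Each increment is one half the difference of two averages of $2^{j-1}$ consecutive gradient samples; splitting $\grad f_t^N(\bmc)-\grad\bar f(\bmc)$ into a fluctuation part and a bias part, a standard variance bound for Markov-chain averages bounds the fluctuation by $O(F^2\tmix/N)$, while, using $d_{\text{mix}}(\ell\tmix)\le 2^{-\ell}$, the bias of an $N$-sample average started from an arbitrary distribution is $O(F\tmix/N)$; hence $\mathbb{E}_{t-1}\|\grad f_t^N(\bmc)-\grad\bar f(\bmc)\|_*^2=O(F^2\tmix/N)$ for $N\ge\tmix$ and is $O(F^2)$ trivially for $N<\tmix$ by \Cref{ass:lip}. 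Therefore the levels with $2^j\le\tmix$ contribute $\sum 2^j\cdot O(F^2)=O(F^2\tmix)$, and each of the remaining $O(\log T)$ levels contributes $2^j\cdot O(F^2\tmix/2^j)=O(F^2\tmix)$, for a total of $\tilde O(F^2\tmix)$. Since $\bmc$ and the initial distribution of block $t$ are $\mathcal{F}_{t-1}$-measurable and these estimates hold uniformly over $\bmx\in\mathcal{X}$, taking outer expectation gives $\mathbb{E}[\|\grad f_t(\bmc)\|_*^2]=\tilde O(F^2\tmix)$. The other two bounds follow by replacing $(F,\grad f)$ with $(G,\grad g)$ and with $(H,g)$, respectively.

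\emph{Expected sample count, and the main obstacle.} On $\{J_t\le j_{\max}\}$ we have $N_t=2^{J_t}$ and on $\{J_t>j_{\max}\}$ we have $N_t=1$, so
\[
\mathbb{E}[N_t]=\sum_{j=1}^{j_{\max}}2^{-j}\cdot 2^{j}+\mathbb{P}(J_t>j_{\max})=j_{\max}+2^{-j_{\max}}\le\lfloor 2\log_2 T\rfloor+1\le 2\log_2 T+1 .
\]
I expect the only genuinely delicate point to be the per-level increment bound in the second part: one must get exactly the factor $\tmix/N$ so that the MLMC weight $2^j$ is cancelled, and this requires separately controlling the bias of a Markov average that is started from an arbitrary, non-stationary distribution (which costs an extra factor of $\tmix$, absorbed once $N\ge\tmix$). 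Everything else --- including the change of the cutoff from $T$ to $T^2$, which only rescales $j_{\max}$ and hence the hidden $\log T$ factor --- is bookkeeping on top of \cite{Levy22}.
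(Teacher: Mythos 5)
Your proposal is correct and follows essentially the same route as the paper: the telescoping identity via $\mathbb{P}(J_t=j)2^j=1$, the decomposition $\mathbb{E}\lvert g_t\rvert^2\leq 2\mathbb{E}\lvert g_t-g_t^1\rvert^2+2H^2$ with per-level weights $2^j$ cancelled by an $\tilde O(L^2\tmix/2^j)$ increment bound, and the direct computation of $\mathbb{E}[N_t]$. The only difference is that where you sketch the fluctuation-plus-bias derivation of the Markov-average second-moment bound from scratch, the paper simply invokes its restatement of \citep[Lemma A.6]{Levy22} (Lemma~\ref{lem:levy}), which packages exactly that estimate, including the trivial handling of levels with $2^j\lesssim\tmix$.
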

The reason for setting the upper bound $T^2$ on $\tilde N_t$ instead of $T$ is to achieve high accuracy of estimation for $f_t$, $g_t$, $\grad f_t$, and $\grad g_t$ that leads to the desired performance guarantees of \cref{alg1}. To be specific, we use the following estimation bounds based on \citep[Lemma A.6]{Levy22}.
\begin{lemma}\label{lem:concentration}
There exists $C(T)>0$ with
$$C(T)=O\left(\left(\log(T)\log\left(\tmix T^2\log(T)\right)\right)^{1/2}\right)$$
such that
    \begin{align*}
        \mathbb{E}_{t-1}\left[\left|f_t^{2^{j_{\max}}}(\bmx)-\bar{f}(\bmx)\right|^2\right]&\leq C(T)^2\frac{\tmix}{T^2},\quad \mathbb{E}_{t-1}\left[\lVert\grad f_t^{2^{j_{\max}}}(\bmx)-\grad\bar{f}(\bmx)\rVert_*^2\right]\leq C(T)^2
        \frac{\tmix}{T^2},\\
        \mathbb{E}_{t-1}\left[\left|g_t^{2^{j_{\max}}}(\bmx)-\bar{g}(\bmx)\right|^2\right]&\leq C(T)^2\frac{\tmix}{T^2},\quad \mathbb{E}_{t-1}\left[\lVert\grad g_t^{2^{j_{\max}}}(\bmx)-\grad\bar{g}(\bmx)\rVert_*^2\right]\leq
        C(T)^2\frac{\tmix}{T^2}.
    \end{align*}
    hold for any $\bmx\in\mathcal{X}$ that is measurable with respect to $\mathcal{F}_{t-1}$ and any $t\in[T]$.
\end{lemma}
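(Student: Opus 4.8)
The plan is to prove Lemma~\ref{lem:concentration} as a Markov-chain concentration bound for the empirical mean of a bounded, conditionally fixed test function, by a bias--variance decomposition exactly as in~\citep[Lemma A.6]{Levy22}; the only difference from~\cite{Levy22} is that here the number of averaged samples is $N:=2^{j_{\max}}$, which by Lemma~\ref{lem:MLMC} is deterministic with $T^2/2<N\le T^2$, in place of the $\Theta(T)$ samples used there --- this is exactly why the error scales like $\tmix/T^2$ and why the stated $C(T)$ carries a $T^2$ inside the logarithm. Fix $t\in[T]$ and a point $\bmx\in\mathcal{X}$ measurable with respect to $\mathcal{F}_{t-1}$. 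It suffices to bound $\mathbb{E}_{t-1}\big[\,|f_t^{2^{j_{\max}}}(\bmx)-\bar f(\bmx)|^2\,\big]$, since the bounds for $g$ and for the coordinate functions of $\grad f,\grad g$ follow in the same way (the vector-valued cases componentwise, the resulting dimension factor absorbed into the suppressed constants). Because $\mathcal{S}$ is finite and $f(\cdot,\bmxi)$ is continuous on the compact set $\mathcal{X}$, the oscillation $B:=\max_{\bmxi,\bmxi'\in\mathcal{S}}|f(\bmx,\bmxi)-f(\bmx,\bmxi')|$ is a finite constant, and $|f(\bmx,\bmxi)-\bar f(\bmx)|\le B$ (for $g$ and the gradients, boundedness is supplied directly by Assumption~\ref{ass:lip}). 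Conditioned on $\mathcal{F}_{t-1}$, the samples $\bm{\xi_t^{(1)}},\dots,\bm{\xi_t^{(N)}}$ form a trajectory of the ergodic chain started from some distribution $\nu$, so $\bm{\xi_t^{(i)}}\sim\mathbb{P}^{i-1}(\nu,\cdot)$.

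Next I would split $\mathbb{E}_{t-1}[\,|f_t^{N}(\bmx)-\bar f(\bmx)|^2\,]$ into the square of the conditional bias, $\frac1N\sum_{i=1}^N\big(\mathbb{E}_{t-1}[f(\bmx,\bm{\xi_t^{(i)}})]-\bar f(\bmx)\big)$, and the conditional variance of $\frac1N\sum_{i=1}^N f(\bmx,\bm{\xi_t^{(i)}})$. For the bias, $|\mathbb{E}_{t-1}[f(\bmx,\bm{\xi_t^{(i)}})]-\bar f(\bmx)|\le B\,d_{\text{mix}}(i-1)$, and since $d_{\text{mix}}$ is non-increasing and $d_{\text{mix}}(l\tmix)\le 2^{-l}$, one gets $\sum_{i=1}^N d_{\text{mix}}(i-1)\le 2\tmix$, so the bias is $O(B\tmix/N)$ and its square is $O(B^2\tmix^2/N^2)=O(B^2\tmix/T^2)$. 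For the variance I would bound the pairwise covariances: for $i\le j$, conditioning on $\bm{\xi_t^{(i)}}$ and applying the mixing bound over the $j-i$ subsequent steps gives $|\mathrm{Cov}_{t-1}(f(\bmx,\bm{\xi_t^{(i)}}),f(\bmx,\bm{\xi_t^{(j)}}))|\le 2B^2\,d_{\text{mix}}(j-i)$, and summing over all pairs with $\sum_{k\ge0}d_{\text{mix}}(k)\le 2\tmix$ yields a variance of $O(B^2\tmix/N)=O(B^2\tmix/T^2)$. Adding the two pieces already proves the claim with $C(T)$ of constant order in the problem parameters; the more conservative $C(T)$ written in the statement is what one reads off by instead quoting~\citep[Lemma A.6]{Levy22} verbatim, whose proof passes through a high-probability Bernstein-type inequality for Markov chains and thereby picks up the extra $\text{poly}\log T$.

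The step I expect to need the most care is the covariance estimate for a chain not started at stationarity: one must note that both $\mathbb{E}_{t-1}[f(\bmx,\bm{\xi_t^{(j)}})\mid\bm{\xi_t^{(i)}}]$ and $\mathbb{E}_{t-1}[f(\bmx,\bm{\xi_t^{(j)}})]$ lie within $B\,d_{\text{mix}}(j-i)$ of $\bar f(\bmx)$ --- the latter because the lag from the initial distribution, $j-1$, is at least $j-i$ --- so that the factor multiplying $f(\bmx,\bm{\xi_t^{(i)}})-\mathbb{E}_{t-1}[f(\bmx,\bm{\xi_t^{(i)}})]$ inside the covariance is $O(B\,d_{\text{mix}}(j-i))$ rather than merely $O(B)$, which is what makes the double sum collapse to $O(N\tmix)$. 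Everything else is the summation of geometric-type series together with the elementary facts $T^2/2<2^{j_{\max}}\le T^2$ and $\tmix\le T^2$ (for $T$ large enough relative to $\tmix$; otherwise the bound holds trivially once $C(T)$ absorbs the problem constants). The four displayed inequalities then follow by applying this single estimate to $f(\bmx,\cdot)$, $g(\bmx,\cdot)$, and the coordinate functions of $\grad f(\bmx,\cdot)$ and $\grad g(\bmx,\cdot)$.
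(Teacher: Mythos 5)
Your proposal is correct, but it proves the lemma by a genuinely different and more elementary route than the paper. The paper's proof is essentially three lines: it checks that $|f(\bmx,\bmxi)|$, $|g(\bmx,\bmxi)|$, $\lVert\grad f\rVert_*$, $\lVert\grad g\rVert_*$ are uniformly bounded (via Assumptions~\ref{ass:bounded} and~\ref{ass:lip}), then invokes the restated \citep[Lemma A.6]{Levy22} --- Lemmas~\ref{lem:levy0} and~\ref{lem:levy} in the appendix --- with $L=\max(F,G,H,J)$, $A=T^2$, $N=2^{j_{\max}}$, handling the vector-valued (gradient) cases through the norm-equivalence factor $\eta$ with $\lVert\cdot\rVert_*\leq\eta\lVert\cdot\rVert_2$, and reads off $C(T)$ from the resulting expression using $2^{j_{\max}}=O(T^2)$. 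You instead give a self-contained bias--variance decomposition: the bias is controlled by $\sum_{i\geq 0}d_{\text{mix}}(i)\leq 2\tmix$, and the variance by the pairwise covariance bound $|\mathrm{Cov}_{t-1}(h(\bm{\xi_t^{(i)}}),h(\bm{\xi_t^{(j)}}))|\leq 2B^2 d_{\text{mix}}(j-i)$, where your observation that both the conditional and unconditional means of $h(\bm{\xi_t^{(j)}})$ lie within $B\,d_{\text{mix}}(j-i)$ of $\bar h$ is exactly the right point of care. Your computation is sound (including the $\tmix\leq T^2$ caveat for the squared-bias term and the componentwise treatment of gradients, which in finite dimension is equivalent to the paper's $\eta$-factor), and it in fact yields a \emph{constant-order} $C$, strictly sharper than the stated $C(T)=O((\log T\log(\tmix T^2\log T))^{1/2})$; your diagnosis that the polylogarithmic factors in the paper's $C(T)$ are an artifact of routing through the Bernstein-type high-probability machinery inside \citep[Lemma A.6]{Levy22} is accurate. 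The trade-off is that the paper's route reuses a lemma it needs anyway for Lemma~\ref{lem:MLMC}, whereas yours is longer but independent of that external result and tighter in the logarithms (which would not change any of the $\tilde O$ statements downstream).
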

The complete proof of this lemma is given in \Cref{sec:appendix-mlmc}.

\subsection{Adaptive Drift-Plus-Penalty}

The second component of our algorithm for the unknown mixing time setting is the AdaGrad variant of the  drift-plus-penalty algorithm. To develop AdaGrad-style step sizes, let us define the following sequence of parameters. For some positive constant $\delta>0$, we set
\begin{align*}
    a_0=S_0=\delta,\quad a_t:=\frac{F_t^2}{4}+R^2 G_t^2+H_t^2+\delta, \quad S_t:=\delta+\sum_{s=1}^t a_s
\end{align*}
for $t\geq 1$. Here, we may choose any positive number for $\delta$.  Recall that $F_{t}=\lVert\grad f_t(\bmc)\rVert_*$,  $G_{t}=\lVert\grad g_t(\bmc)\rVert_*$, and  $H_{t}=|g_t(\bmc)|$. Based on these parameters, we set the algorithm parameters $V_t$ and $\alpha_t$ as follows.
\begin{equation}\label{eq:param}
    V_t=\frac{S_{t-1}^\beta}{R},\quad \alpha_t=\frac{S_{t-1}}{R^2}
\end{equation}
for some $0<\beta\leq 1/2$. Here, the penalty parameter $V_t$ and the step size parameter $\alpha_t$ can be chosen without knowledge of the global upper bounds $F, G, H$  on $F_t, G_t, H_t$.

Now we are ready to describe our algorithm, which we call the \emph{MLMC adaptive drift-plus-penalty (MDPP)} algorithm. 
\begin{algorithm}[tb]
		\caption{\textbf{M}LMC Adaptive \textbf{D}rift-\textbf{P}lus-\textbf{P}enalty (MDPP)}
		\label{alg1}
		\begin{algorithmic}
	 \STATE {\bfseries Initialize:} Initial iterates $\bm{x_1}\in\mathcal{X}$, $Q_1=0$ and parameters $0<\beta\leq 1/2$, $\delta>0$.
   \FOR{$t=1$ {\bfseries to} $T$}
   \STATE Observe $f_t$ and $g_t$ via MLMC method.
   \STATE Set penalty parameter $V_t$, step size parameter $\alpha_t$ as (\ref{eq:param}).
   \STATE {\bfseries Primal update:} Set $\bmn$ as
  $$\bmn = \argmin_{\bmx\in\mathcal{X}}\left\{\left(V_t\grad f_t(\bmc)+Q_t\grad g_t(\bmc)\right)^\top \bmx + \alpha_t D(\bmx,\bmc)\right\}$$
  \STATE {\bfseries Dual update:} Set $Q_{t+1}$ as
  $$Q_{t+1}=\left[Q_t + g_t(\bmc) +\grad g_t(\bmc)^\top(\bmn-\bmc) \right]_+$$
   \ENDFOR
		\end{algorithmic}
\end{algorithm}
Here, MDPP is a combination of the AdaGrad-style adaptive drift-plus-penalty algorithm with the MLMC estimator presented in the previous subsection. More importantly, the algorithm is designed to solve constrained online convex optimization where the MLMC estimators $f_1,\ldots, f_T$ are the objective loss functions and the MLMC estimators $g_1,\ldots, g_T$ are the constraint functions. 
The following lemma provides an adaptive regret guarantee and an adaptive constraint violation bound for \Cref{alg1} for the associated online convex optimization. 
\begin{lemma}\label{lem:st1}
Suppose that Assumptions \ref{ass:bounded} and \ref{ass:lip} hold. Then for the constrained online convex optimization problem where the MLMC estimators $f_1,\ldots, f_T$ are the objective loss functions and the MLMC estimators $g_1,\ldots, g_T$ are the constraint functions, \Cref{alg1} achieves the following. For any $\bmx\in \mathcal{X}$ with $\bar g(\bmx)\leq 0$, we have
    \begin{align*}
        &\mathbb{E}\left[\sum_{t=1}^T f_t(\bmc)-\sum_{t=1}^T f_t(\bmx)\right]\\
        &= \tilde{O}\left(\mathbb{E}\left[S_T\right]^{1-\beta}+\tmix^{1/2}\mathbb{E}\left[S_T\right]^{1/2-\beta}+\tmix^{1/2}T^{1/4}\mathbb{E}\left[S_T\right]^{1/4-\beta/2}+\tmix^{1/2}T^{1-\beta}\right),\\
        &\mathbb{E}\left[\sum_{t=1}^T g_t(\bmc)\right]\\
        &=\tilde{O}\left(\mathbb{E}[S_T]^{1/2}+T^{1/4}\mathbb{E}[S_T]^{\beta/2+1/4}+\tmix^{1/2}+\frac{\mathbb{E}[S_T]+T^{1/2}\mathbb{E}[S_T]^{\beta+1/2}}{\tmix^{\beta/2+1/4}T^{\beta/2+1/2}}\right.\\
        &\qquad\quad \left.+\frac{\tmix^{1/2}+\mathbb{E}[S_T]^{1/2}+\mathbb{E}[S_T]^{\beta/2+1/4}}{\tmix^{\beta/2+1/4}T^{\beta/2+1/2}}+(\log S_T)^2\tmix^{\beta/2-1/4}T^{\beta/2+1/2}\right).
    \end{align*}
\end{lemma}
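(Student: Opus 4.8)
The plan is to prove Lemma~\ref{lem:st1} in two stages. First, instantiate the general adaptive drift-plus-penalty machinery of \Cref{sec:adpp}, which is already stated for time-varying $V_t,\alpha_t$, to obtain master estimates for the regret, for the expected virtual queue size $\mathbb{E}[Q_t]$, and for the cumulative violation $\sum_{t=1}^T g_t(\bmc)$ valid for any sequence of convex objective and constraint functions; then feed in the MLMC-specific bounds of Lemmas~\ref{lem:MLMC} and~\ref{lem:concentration}. The first obstacle is that the MLMC estimators $f_t,g_t$ are not convex, so in every place where the master analysis uses convexity one substitutes the surrogates $\bar f_{t-1}:=\mathbb{E}_{t-1}[f_t]$, $\bar g_{t-1}:=\mathbb{E}_{t-1}[g_t]$, which by Lemma~\ref{lem:MLMC} are convex and satisfy $\mathbb{E}_{t-1}[\grad f_t]=\grad\bar f_{t-1}$, $\mathbb{E}_{t-1}[\grad g_t]=\grad\bar g_{t-1}$. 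Since $\bmn$ is a function of the time-$t$ sample and hence not $\mathcal{F}_{t-1}$-measurable, $\mathbb{E}_{t-1}$ cannot be moved past $\bmn$, so the mean-zero fluctuations $\grad f_t(\bmc)-\grad\bar f_{t-1}(\bmc)$ and $\grad g_t(\bmc)-\grad\bar g_{t-1}(\bmc)$ have to be carried through the algebra and bounded via their second moments, which Lemma~\ref{lem:MLMC} places at $\tilde{O}(\tmix)$; this is precisely what injects the extra $\tmix^{1/2}$-type summands. Finally, for the benchmark $\bmx$ with $\bar g(\bmx)\le 0$, Lemma~\ref{lem:concentration} lets us replace $\bar g_{t-1}(\bmx)$ by $\bar g(\bmx)+\tilde{O}(\sqrt{\tmix}/T)\le\tilde{O}(\sqrt{\tmix}/T)$, and $\bar f_{t-1}(\bmx)$ by $\bar f(\bmx)\pm\tilde{O}(\sqrt{\tmix}/T)$.

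Second, I would dispatch the AdaGrad-style sums produced by the master estimates. Because $V_t,\alpha_t$ are written through $S_{t-1}$ alone, the parameters are predictable ($\mathcal{F}_{t-1}$-measurable), which both legitimizes the conditioning just described and tames the sums. The pathwise AdaGrad estimates $\sum_t a_t/S_{t-1}^\gamma=O(S_T^{1-\gamma})$ for $\gamma\in[0,1)$, together with $V_t=S_{t-1}^\beta/R$ and $\alpha_t=S_{t-1}/R^2$, turn the master terms into powers of $S_T$: for instance $R^2\sum_t(\alpha_t-\alpha_{t-1})/V_t=O(S_T^{1-\beta})$, $\sum_t V_t^2F_t^2/\alpha_t=O(S_T^{2\beta})$, $\sum_t V_tF_tG_t/\alpha_t=O(S_T^{\beta})$, and $\sum_t G_t^2/(V_t\alpha_t)=O(1)$; the rare ``large-jump'' steps, where $a_t$ exceeds $S_{t-1}$, are handled in expectation by conditioning on $\mathcal{F}_{t-1}$ and invoking $\mathbb{E}_{t-1}[a_t]=\tilde{O}(\tmix)$ (Lemma~\ref{lem:MLMC}) with $S_{t-1}\ge\delta t$. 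As $S_T$ is random, each power $S_T^\gamma$ with $\gamma\in[0,1]$ passes to expectation by Jensen's inequality, $\mathbb{E}[S_T^\gamma]\le\mathbb{E}[S_T]^\gamma$, and $\mathbb{E}[S_T]=\delta(T+1)+\sum_t\mathbb{E}[\tfrac14 F_t^2+R^2G_t^2+H_t^2]=\tilde{O}(\tmix T)$ by Lemma~\ref{lem:MLMC}, though the statement keeps everything in terms of $\mathbb{E}[S_T]$.

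Third, the heart of the argument is the expected virtual queue bound. Applying the master queue estimate with comparison point $\bmx$ and inserting the surrogates, the Bregman part telescopes by Abel summation against $D\le R^2$ to $S_{T-1}\le S_T$, and together with $\sum_t\mathbb{E}[V_tF_tR]=\sum_t\mathbb{E}[S_{t-1}^\beta F_t]=\tilde{O}(\sqrt{T}\,\mathbb{E}[S_T]^{\beta+1/2})$ (Cauchy--Schwarz) and $\sum_t\mathbb{E}[(H_t+G_tR)^2]=\tilde{O}(\tmix T)$ one arrives at
\[
\mathbb{E}\!\left[Q_{T+1}^2\right]\;=\;\tilde{O}\!\left(\mathbb{E}[S_T]+\sqrt{T}\,\mathbb{E}[S_T]^{\beta+1/2}\right)+\tilde{O}\!\left(\frac{\sqrt{\tmix}}{T}\right)\sum_{t=1}^T\mathbb{E}\!\left[Q_t\right].
\]
Since the same inequality holds for every prefix, writing $M:=\max_{t\le T+1}\mathbb{E}[Q_t]$ and using $\mathbb{E}[Q_t]\le\sqrt{2\mathbb{E}[Q_t^2]}$ yields the self-referential inequality $M^2=\tilde{O}(\mathbb{E}[S_T]+\sqrt{T}\,\mathbb{E}[S_T]^{\beta+1/2})+\tilde{O}(\sqrt{\tmix})\,M$, whence $M=\tilde{O}(\sqrt{\tmix}+T^{1/4}\mathbb{E}[S_T]^{\beta/2+1/4}+\mathbb{E}[S_T]^{1/2})$. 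Feeding $M$ back closes the proof: the violation bound assembles from $\sum_t g_t(\bmc)\le Q_{T+1}+\sum_t\grad g_t(\bmc)^\top(\bmc-\bmn)$ with $\|\bmc-\bmn\|\le(V_tF_t+Q_tG_t)/\alpha_t$ from the primal update and $2$-strong convexity of $\Phi$; the regret bound follows by dividing the per-step drift-plus-penalty inequality by $V_t$ before summing --- so that $\sum_t\Delta_t/V_t\ge 0$ by Abel summation, since $L_1=0$ and $V_t$ is non-decreasing --- giving $\mathbb{E}[\operatorname{Regret}(T)]\le\sum_t V_t^{-1}(\text{master terms at }t)$, into which the substitution of $M$ and of the AdaGrad estimates above produces the four displayed terms.

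The step I expect to be the main obstacle is exactly this coupling: the estimation-error contribution $\tilde{O}(\sqrt{\tmix}/T)\sum_t\mathbb{E}[Q_t]$ ties the queue size at all times back to itself, and closing the resulting quadratic inequality while propagating the $\mathbb{E}[S_T]$-dependence --- which is available only in expectation yet enters the master bounds pathwise, so every fractional power of $S_T$ needs its own Jensen step and the heavy tail of the MLMC increments $a_t$ must be absorbed through the predictability of $V_t,\alpha_t$ --- is where the bookkeeping is delicate. It is the loss of convexity of the MLMC estimators together with the $\tilde{O}(\tmix)$ second-moment fluctuations that accounts for the $\tmix^{1/2}$ factors and the slightly worse rates relative to the known-$\tmix$ case.
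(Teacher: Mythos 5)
Your skeleton matches the paper's: instantiate the adaptive drift-plus-penalty master lemmas of \Cref{sec:adpp} (which already absorb the non-convexity of the MLMC estimators through the convex surrogates $\mathbb{E}_{t-1}[f_t],\mathbb{E}_{t-1}[g_t]$ of Lemma~\ref{lem:MLMC}), feed in the second-moment bounds $\mathbb{E}_{t-1}[F_t^2],\mathbb{E}_{t-1}[G_t^2],\mathbb{E}_{t-1}[H_t^2]=\tilde O(\tmix)$ and the concentration bound of Lemma~\ref{lem:concentration}, and close a self-referential inequality for the expected queue size. Your closure of $M^2=\tilde O(\mathbb{E}[S_T]+\sqrt T\,\mathbb{E}[S_T]^{\beta+1/2})+\tilde O(\sqrt{\tmix})M$ via the maximum is a legitimate substitute for the paper's induction (Lemma~\ref{lem:q-bound-mdpp}) and yields the same order for $\mathbb{E}[Q_t]$.

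There are, however, two places where the sketch does not reach the stated bounds. First, the regret's cross term $\sum_t\mathbb{E}[Q_tg_t(\bmx)/V_t]$ is handled in the paper by a \emph{separate} induction for $\mathbb{E}[Q_t/V_t]$ (Lemma~\ref{lem:qv}), which in turn needs the interpolated estimate $\mathbb{E}_{t-1}[g_t(\bmx)]\le D(T)\tmix^{1/4}T^{-1/2}$ of Lemma~\ref{lem:newbound} --- the geometric mean of the trivial bound $\tilde O(H)$ and the concentration bound $\tilde O(\tmix^{1/2}T^{-1})$; you never bound $\mathbb{E}[Q_t/V_t]$, and replacing it by $M/(\delta t)^\beta$ produces a different constellation of terms. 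Second, and more seriously, the violation term $\sum_tG_t^2Q_t/(2\alpha_t)$ is not addressed: $Q_t$ and $R^2G_t^2/S_{t-1}$ are correlated, and the obvious conditioning route ($\mathbb{E}_{t-1}[G_t^2]=\tilde O(\tmix)$ together with $S_{t-1}\ge\delta t$) costs an extra factor of $\tmix$ on the dominant term $T^{1/4}\mathbb{E}[S_T]^{\beta/2+1/4}$, which is not absorbed by the stated bound and would degrade the feasibility gap in \Cref{thm5}. The paper instead pulls out $\max_{t}Q_t$, bounds $\sum_tR^2G_t^2/(2S_{t-1})$ logarithmically via Lemma~\ref{lem:sum3}, and decouples by AM--GM against the weight $(I\tmix)^{\beta/2+1/4}T^{\beta/2+1/2}$ together with a second-moment bound on $\max_tQ_t$ from \eqref{ineq:q-square}; this is precisely what generates the last three displayed terms of the violation bound, and your ``$\sum_tG_t^2/(V_t\alpha_t)=O(1)$'' does not substitute for it.
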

Recall that the parameters $V_t$ and $\alpha_t$ depend on the parameter $\delta$. Here, we may decide any positive number for $\delta$, its choice does affect the performance of \Cref{alg1}. Although the bounds given in Lemma~\ref{lem:st1} do not exhibit an explicit dependence on $\delta$, our proof of Lemma~\ref{lem:st1} in \Cref{sec:thm5proof} reveals that increasing $\delta$ increases the objective gap $\mathbb{E}[\sum_{t=1}^T f_t(\bmc)-\sum_{t=1}^T f_t(\bmx)]$ and decreases the constraint violation $\mathbb{E}[\sum_{t=1}^T g_t(\bmc)]$. Likewise, decreasing $\delta$ decreases the objective gap and increases the constraint violation.

Lemma~\ref{lem:MLMC} implies that $\mathbb{E}\left[S_T\right] = \tilde O(\tmix T)$. Plugging in this bound on $\mathbb{E}\left[S_T\right]$ to the adaptive performance guarantees given in Lemma~\ref{lem:st1}, we deduce the following result. 

\begin{proposition}\label{prop}
   Suppose that Assumptions \ref{ass:bounded} and \ref{ass:lip} hold. Then for the constrained online convex optimization problem where the MLMC estimators $f_1,\ldots, f_T$ are the objective loss functions and the MLMC estimators $g_1,\ldots, g_T$ are the constraint functions, \Cref{alg1} achieves the following. For any $\bmx\in \mathcal{X}$ with $\bar g(\bmx)\leq 0$, we have 
    \begin{align*}
        \mathbb{E}\left[\sum_{t=1}^T f_t(\bmc)-\sum_{t=1}^T f_t(\bmx)\right]&= \tilde{O}\left(\tmix^{1-\beta} T^{1-\beta}\right),\\
        \mathbb{E}\left[\sum_{t=1}^T g_t(\bmc)\right]&= \tilde{O}\left(\tmix^{\beta/2+1/4}T^{\beta/2+1/2}+\tmix^{3/4-\beta/2}T^{1/2-\beta/2}\right).
    \end{align*}
    for any $\bmx\in\mathcal{X}$ satisfying $\bar g(\bmx)\leq 0$.
\end{proposition}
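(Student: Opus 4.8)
The plan is to read off \Cref{prop} from \Cref{lem:st1} by substituting the bound $\mathbb{E}[S_T]=\tilde O(\tmix T)$ and then doing exponent bookkeeping. First I would record the two facts that legitimize the substitution. Since $S_T=\delta+\sum_{s=1}^T a_s$ with $a_s=\tfrac14 F_s^2+R^2G_s^2+H_s^2+\delta$, taking expectations and invoking the moment bounds $\mathbb{E}[F_s^2],\mathbb{E}[G_s^2],\mathbb{E}[H_s^2]=\tilde O(\tmix)$ from \Cref{lem:MLMC} gives $\mathbb{E}[a_s]=\tilde O(\tmix)$ uniformly in $s$ (for fixed $R,F,G,H,\delta$), hence $\mathbb{E}[S_T]=\tilde O(\tmix T)$ — exactly the quantity used in the paragraph preceding the statement. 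Moreover, because the MLMC sample count is truncated at $N_t\le T^2$, \Cref{ass:lip} forces $S_T$ to be deterministically at most a fixed polynomial in $T$, so $\log S_T=O(\log T)$ always; this is what lets me treat the stray $(\log S_T)^2$ factor in \Cref{lem:st1} as part of $\tilde O(\cdot)$. Since $0<\beta\le1/2$, every power of $\mathbb{E}[S_T]$ appearing in \Cref{lem:st1} is nonnegative, so I may replace $\mathbb{E}[S_T]$ by its upper bound $\tilde O(\tmix T)$.

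For the optimality gap, substituting turns the four summands of the regret bound into $(\tmix T)^{1-\beta}$, $\tmix^{1/2}(\tmix T)^{1/2-\beta}=\tmix^{1-\beta}T^{1/2-\beta}$, $\tmix^{1/2}T^{1/4}(\tmix T)^{1/4-\beta/2}=\tmix^{3/4-\beta/2}T^{1/2-\beta/2}$, and $\tmix^{1/2}T^{1-\beta}$, all up to $\tilde O(\cdot)$. Using $\tmix\ge1$, $T\ge1$, and $\beta\le1/2$ (so that $3/4-\beta/2\le1-\beta$ and $1/2\le1-\beta$, and similarly $1/2-\beta\le1-\beta$, $1/2-\beta/2\le1-\beta$ for the $T$-exponents), each of the last three monomials is $\tilde O(\tmix^{1-\beta}T^{1-\beta})$, so the whole bound collapses to $\tilde O(\tmix^{1-\beta}T^{1-\beta})$.

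For the feasibility gap, substituting $\mathbb{E}[S_T]=\tilde O(\tmix T)$ and expanding every fraction in the violation bound of \Cref{lem:st1} yields, up to $\tilde O(\cdot)$, the monomials $\sqrt{\tmix T}$, $M_1:=\tmix^{\beta/2+1/4}T^{\beta/2+1/2}$, $M_2:=\tmix^{3/4-\beta/2}T^{1/2-\beta/2}$, $\tmix^{1/2}$, $\tmix^{\beta/2-1/4}T^{\beta/2+1/2}$, together with several terms of the form $\tmix^{1/4-\beta/2}T^{-\beta/2-1/2}$, $\tmix^{1/4-\beta/2}T^{-\beta/2}$, $T^{-1/4}$ coming from the denominators. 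Everything except $\sqrt{\tmix T}$ is immediately dominated by $M_1$ or $M_2$ via $\tmix,T\ge1$ and $\beta\le1/2$ (for instance $\tmix^{\beta/2-1/4}T^{\beta/2+1/2}\le M_1$, $\tmix^{1/2}\le M_2$, and the decaying terms are $\le\tmix^{1/4}\le M_2$). The one step that is not pure monotonicity is absorbing $\sqrt{\tmix T}=\tmix^{1/2}T^{1/2}$: here I split on the sign of $\tmix^{1/4-\beta/2}-T^{\beta/2}$. If $\tmix^{1/4-\beta/2}\le T^{\beta/2}$ then $\tmix^{1/2}T^{1/2}=M_1\cdot\tmix^{1/4-\beta/2}T^{-\beta/2}\le M_1$; if instead $\tmix^{1/4-\beta/2}\ge T^{\beta/2}$ then $\tmix^{1/2}T^{1/2}=M_2\cdot\tmix^{\beta/2-1/4}T^{\beta/2}\le M_2$. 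Either way $\sqrt{\tmix T}\le M_1+M_2$, so the violation bound reduces to $\tilde O(M_1+M_2)=\tilde O(\tmix^{\beta/2+1/4}T^{\beta/2+1/2}+\tmix^{3/4-\beta/2}T^{1/2-\beta/2})$, as claimed.

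I expect the only genuinely non-routine point to be this last dichotomy: neither $M_1$ nor $M_2$ alone dominates $\sqrt{\tmix T}$ across all ratios of $\tmix$ to $T$, so the argument has to play the two claimed terms off each other. Everything else — the moment computation for $\mathbb{E}[S_T]$, the observation that $\log S_T=\tilde O(1)$ thanks to the $N_t\le T^2$ truncation, and the exponent comparisons above — is mechanical.
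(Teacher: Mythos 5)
Your proposal is correct and follows exactly the route the paper takes: the paper's own justification for Proposition~\ref{prop} is precisely the one-line remark that $\mathbb{E}[S_T]=\tilde O(\tmix T)$ (from Lemma~\ref{lem:MLMC}) should be plugged into Lemma~\ref{lem:st1}, and your exponent bookkeeping — including the observation that the $N_t\le T^2$ truncation makes $\log S_T=O(\log T)$ deterministically, and the dichotomy needed to absorb $\sqrt{\tmix T}$ into $M_1+M_2$ — correctly fills in the details the paper leaves implicit.
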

We remark that the performance bounds given in Lemma~\ref{lem:st1} and Proposition~\ref{prop} are not comparable to the regret and constraint violation bounds for stochastic-constrained stochastic optimization. When each pair of loss and constraint functions for stochastic-constrained stochastic optimization corresponds to a single data, the associated regret and constraint violation measure are given by the following.
\begin{align*}
    \operatorname{Regret}(T) &= \sum_{t=1}^T\sum_{j=1}^{N_t} f_t^{(j)}(\bmc)-\sum_{t=1}^T\sum_{j=1}^{N_t} f_t^{(j)}(\bmx^*),\\
    \operatorname{Violation}(T) &= \sum_{t=1}^T\sum_{j=1}^{{N}_t} g_t^{(j)}(\bmc).
\end{align*}
Here, $f_t^{(1)},\ldots, f_t^{(N_t)}$ are the $N_t$ sampled functions from which we derive the MLMC estimator $f_t$ for $t\in[T]$. In contrast, Lemma~\ref{lem:st1} and Proposition~\ref{prop} analyze the performance of \Cref{alg1} on the sequence of the MLMC estimators. Despite this, it would be an interesting question to understand the performance of \Cref{alg1} for the latter online convex optimization setting where each function pair corresponds to a single data.

The proof of Lemma~\ref{lem:st1} is given in \Cref{sec:mdpp-proof}. One of the main components of the analysis is to provide adaptive bounds on the terms
$\mathbb{E}\left[Q_t\right]$ and $\mathbb{E}\left[{Q_t}/{V_t}\right]$. This is possible thanks to our subtle choice of parameters $V_t$ and $\alpha_t$. It turns out that the AdaGrad style analysis for drift-plus-penalty is more sophisticated than that for unconstrained stochastic gradient descent.

Finally, we state the following theorem providing upper bounds on the optimality gap and the feasibility gap under \Cref{alg1} for \ref{scso}. The argument is to use the results of Proposition~\ref{prop} and the estimation error bounds due to Lemma~\ref{lem:concentration}. In contrast to the setting of \Cref{sec:tmix} for which we had to rely on the mixing property of ergodic Markov chains directly, the MLMC estimators are already close to $\bar f$ and $\bar g$.

\begin{theorem}\label{thm5}
   Suppose that Assumptions \ref{ass:bounded} and \ref{ass:lip} hold. Then for stochastic-constrained stochastic optimization \eqref{scso}, \Cref{alg1} guarantees that
    \begin{align*}
        \mathbb{E}\left[\operatorname{Gap}(\bm{\bar x_T})\right] &=\tilde{O}\left(\frac{\tmix^{1-\beta}}{T^{\beta}}\right),\\
        \mathbb{E}\left[\operatorname{Infeasibility}(\bm{\bar x_T})\right] &=\tilde{O}\left(\frac{\tmix^{(2\beta+1)/4}}{T^{(1-\beta)/2}}+\frac{\tmix^{(3-2\beta)/4}}{T^{(\beta+1)/2}}\right).
    \end{align*}
\end{theorem}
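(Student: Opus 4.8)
The plan is to reduce Theorem~\ref{thm5} to the online guarantees of Proposition~\ref{prop} by combining convexity of $\bar f$ and $\bar g$ with the MLMC estimation accuracy of Lemma~\ref{lem:concentration}; in contrast to Section~\ref{sec:tmix}, no direct appeal to the mixing of the chain is needed, since the MLMC estimators are already within $\tilde O(\sqrt{\tmix}/T)$ of $\bar f$ and $\bar g$. First I would apply Jensen's inequality: since $\bar f$ and $\bar g$ are convex and $\bm{\bar x_T}=\frac{1}{T}\sum_{t=1}^T\bmc$, we get $\mathbb{E}[\operatorname{Gap}(\bm{\bar x_T})]\le\frac{1}{T}\mathbb{E}[\sum_{t=1}^T(\bar f(\bmc)-\bar f(\bm{x^\#}))]$ and $\mathbb{E}[\operatorname{Infeasibility}(\bm{\bar x_T})]\le\frac{1}{T}\mathbb{E}[\sum_{t=1}^T\bar g(\bmc)]$, so it suffices to bound the running sums $\mathbb{E}[\sum_t\bar f(\bmc)]$ and $\mathbb{E}[\sum_t\bar g(\bmc)]$.

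Next I would bridge between these population quantities and the MLMC estimators the algorithm actually receives. The decision $\bmc$ is committed before $(f_t,g_t)$ is revealed, so $\bmc$ is $\mathcal{F}_{t-1}$-measurable; hence Lemma~\ref{lem:MLMC} gives $\mathbb{E}_{t-1}[f_t(\bmc)]=\mathbb{E}_{t-1}[f_t^{2^{j_{\max}}}(\bmc)]$ and likewise for $g_t$, and the conditional Cauchy--Schwarz inequality together with Lemma~\ref{lem:concentration} yields $|\mathbb{E}_{t-1}[f_t(\bmc)-\bar f(\bmc)]|\le C(T)\sqrt{\tmix}/T$ and $|\mathbb{E}_{t-1}[g_t(\bmc)-\bar g(\bmc)]|\le C(T)\sqrt{\tmix}/T$. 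Taking expectations and summing over $t\in[T]$ gives $|\mathbb{E}[\sum_t(f_t(\bmc)-\bar f(\bmc))]|\le C(T)\sqrt{\tmix}$ and the analogous bound for $g$; the same estimate applied at the fixed feasible point $\bm{x^\#}$ (which is trivially $\mathcal{F}_{t-1}$-measurable) gives $|\mathbb{E}[\sum_t f_t(\bm{x^\#})]-T\bar f(\bm{x^\#})|\le C(T)\sqrt{\tmix}$.

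Then I would invoke Proposition~\ref{prop} with benchmark $\bmx=\bm{x^\#}$, which is admissible because $\bar g(\bm{x^\#})\le0$: this gives $\mathbb{E}[\sum_t f_t(\bmc)-\sum_t f_t(\bm{x^\#})]=\tilde O(\tmix^{1-\beta}T^{1-\beta})$ and $\mathbb{E}[\sum_t g_t(\bmc)]=\tilde O(\tmix^{\beta/2+1/4}T^{\beta/2+1/2}+\tmix^{3/4-\beta/2}T^{1/2-\beta/2})$. Chaining with the bridging bounds, dividing by $T$, and using $C(T)=\tilde O(1)$, I would obtain $\mathbb{E}[\operatorname{Gap}(\bm{\bar x_T})]=\tilde O(\tmix^{1-\beta}T^{-\beta}+\sqrt{\tmix}/T)$ and $\mathbb{E}[\operatorname{Infeasibility}(\bm{\bar x_T})]=\tilde O(\tmix^{(2\beta+1)/4}T^{-(1-\beta)/2}+\tmix^{(3-2\beta)/4}T^{-(\beta+1)/2}+\sqrt{\tmix}/T)$. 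A short computation of exponents shows that for $\beta\in(0,1/2]$ and $\tmix\ge1$ the residual $\sqrt{\tmix}/T$ is dominated by $\tmix^{1-\beta}T^{-\beta}$ in the first line and by $\tmix^{(3-2\beta)/4}T^{-(\beta+1)/2}$ in the second, which is exactly the stated bound.

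I do not expect a genuine technical obstacle here, since the argument is essentially a reduction; the two points requiring care are (i) the measurability bookkeeping ensuring $\bmc\in\mathcal{F}_{t-1}$ so that Lemmas~\ref{lem:MLMC} and~\ref{lem:concentration} can legitimately be invoked with $\bmx=\bmc$, and (ii) verifying that the $\tilde O(\sqrt{\tmix}/T)$ residuals produced by the MLMC estimation error are absorbed into the dominant terms of Proposition~\ref{prop} over the whole range $\beta\in(0,1/2]$.
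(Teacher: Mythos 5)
Your proposal is correct and follows essentially the same route as the paper's proof: Jensen's inequality on $\bar f,\bar g$, the three-term decomposition through the MLMC estimators at $\bmc$ and $\bm{x^\#}$, the concentration bound of Lemma~\ref{lem:concentration}, and Proposition~\ref{prop}, with the same final absorption of the $\tilde O(\sqrt{\tmix}/T)$ residual. If anything, your bookkeeping is slightly more careful than the paper's, since you pass through $\mathbb{E}_{t-1}[f_t]=\mathbb{E}_{t-1}[f_t^{2^{j_{\max}}}]$ before invoking Lemma~\ref{lem:concentration}, which is stated for $f_t^{2^{j_{\max}}}$ rather than for the MLMC estimator $f_t$ itself.
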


\section{Numerical Experiments}\label{sec:exp}

We examine the performance of the ergodic drift-plus-penalty algorithm (\Cref{alg0}) for the known mixing time case and the MLMC adaptive drift-plus-penalty algorithm (\Cref{alg1}) for the unknown mixing time case on a linear classification problem with fairness constraints using synthetic data. We follow the experimental setup of \cite{fairness-constraints-jmlr}. We adopt \cite{fairness-constraints-jmlr} for creating data points and sensitive features (with $\phi=\pi/2$) and imposing fairness constraints.
To make our Markov chain more meaningful and complex enough, we experiment with a 3-state chain instead of a 2-state chain in contrast to \cite{Levy22}. The $3$-state Markov chain is given with the following transition matrix
\begin{center}
$
\begin{pmatrix}
1-2p & p & p\\
p & 1-2p & p\\
p & p & 1-2p
\end{pmatrix}$ 
\end{center}
which has stationary distribution $(1/3,1/3,1/3)$. 

\begin{theorem}\label{thm:tmix}\citep[Theorems 12.4 and 12.5]{levin2017markov}
    For an ergodic and reversible Markov chain with $n$ states, whose transition matrix is $P$, let $1=\lambda_1>\lambda_2\geq\ldots\geq \lambda_n$ be the eigenvalues of $P$ and $\mu_{\min}$ be the minimum entry of the stationary distribution. Then, the mixing time of the chain satisfies
    $$\frac{|\lambda_2|}{1-\max\{|\lambda_2|, |\lambda_n|\}}\log 2 \leq \tmix\leq \frac{1}{1-\max\{|\lambda_2|, |\lambda_n|\}}\log\left(\frac{4}{\mu_{\min}}\right).$$
\end{theorem}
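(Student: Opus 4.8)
The displayed inequalities are the classical spectral-gap sandwich bound on the mixing time of a finite reversible chain, quoted verbatim from \citep[Theorems 12.4 and 12.5]{levin2017markov}, so in the paper it is really invoked as a black box; still, here is how I would reconstruct the two halves. The structural fact I would use throughout is that reversibility makes $P$ self-adjoint on $\ell^2(\mu)$, hence $P$ has real eigenvalues $1=\lambda_1>\lambda_2\ge\cdots\ge\lambda_n>-1$ and an $\ell^2(\mu)$-orthonormal eigenbasis $\varphi_1\equiv 1,\varphi_2,\dots,\varphi_n$ satisfying the Parseval identity $\sum_{j}\varphi_j(x)^2=1/\mu(x)$ for every state $x$. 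Write $\lambda_\ast:=\max\{|\lambda_2|,|\lambda_n|\}$, the quantity that appears in both denominators.

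For the lower bound I would pick $\lambda\in\{\lambda_2,\lambda_n\}$ with $|\lambda|=\lambda_\ast$ together with its eigenfunction $\varphi$, normalized so that $\|\varphi\|_\infty=|\varphi(x_0)|=1$ for some $x_0$; orthogonality to $\varphi_1$ gives $\sum_y\mu(y)\varphi(y)=0$. Since $P^t\varphi=\lambda^t\varphi$, one gets $|\lambda|^t=\bigl|\sum_y(P^t(x_0,y)-\mu(y))\varphi(y)\bigr|\le 2\,d_{\text{mix}}(t)$, so $d_{\text{mix}}(t)\le 1/4$ forces $\lambda_\ast^t\le 1/2$, i.e. $t\ge \log 2/\log(\lambda_\ast^{-1})$; combining with the elementary bound $\log(\lambda_\ast^{-1})\le(1-\lambda_\ast)/\lambda_\ast$ and then with $\lambda_\ast\ge|\lambda_2|$ yields $\tmix\ge \lambda_\ast\log 2/(1-\lambda_\ast)\ge |\lambda_2|\log 2/(1-\lambda_\ast)$.

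For the upper bound I would expand the transition density as $P^t(x,y)/\mu(y)=\sum_{j}\lambda_j^t\varphi_j(x)\varphi_j(y)$, subtract the $j=1$ term, and bound the resulting $\chi^2$-divergence by $\sum_{j\ge 2}\lambda_j^{2t}\varphi_j(x)^2\le\lambda_\ast^{2t}/\mu(x)$ using Parseval. Cauchy--Schwarz upgrades this to a total-variation bound, $2\,\|P^t(x,\cdot)-\mu\|_{TV}\le\lambda_\ast^t/\sqrt{\mu(x)}\le\lambda_\ast^t/\sqrt{\mu_{\min}}$, and requiring the right-hand side to be at most $2\cdot\tfrac14$ together with $\log(\lambda_\ast^{-1})\ge 1-\lambda_\ast$ gives $\tmix\le \log(2/\sqrt{\mu_{\min}})/(1-\lambda_\ast)\le \log(4/\mu_{\min})/(1-\lambda_\ast)$, the last step since $2/\sqrt{\mu_{\min}}\le 4/\mu_{\min}$ whenever $\mu_{\min}\le1$.

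The only real work is constant-chasing: making the $L^2\to L^1$ comparison tight enough, keeping track of $\sqrt{\mu_{\min}}$ versus $\mu_{\min}$ so that the clean form $\log(4/\mu_{\min})$ survives, and noting that the displayed lower bound carries $|\lambda_2|$ rather than the sharper $\lambda_\ast$ in the numerator, so it is implied by (but slightly weaker than) what the eigenfunction argument actually delivers. Since in the experiments this is applied only to the stated $3$-state chain, whose eigenvalues are $1$ and $1-3p$ (with multiplicity two) and whose stationary distribution has $\mu_{\min}=1/3$, one could alternatively just substitute these values directly into the two bounds.
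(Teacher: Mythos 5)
This theorem is not proved in the paper at all---it is quoted verbatim from \citet[Theorems 12.4 and 12.5]{levin2017markov}---and your reconstruction is exactly the standard spectral argument given there (eigenfunction test plus $\log(1/\lambda_\ast)\leq(1-\lambda_\ast)/\lambda_\ast$ for the lower bound; the $\ell^2(\mu)$ eigenexpansion, Parseval, and Cauchy--Schwarz for the upper bound), carried out correctly, including the observation that the stated lower bound with $|\lambda_2|$ in the numerator is a slight weakening of what the argument yields with $\lambda_\ast$. Nothing further is needed.
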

Then it follows that the mixing time of the 3-state Markov chain satisfies $$\frac{1-3p}{3p}\log 2\leq\tmix\leq \frac{1}{3p}\log 12.$$
In our experiment, we used $1/3p$ as an approximation of the mixing time.

For each set of data points, we generate two clusters, each of which has 1,000 data points in $\mathbb{R}^2$ sampled from a multivariate normal distribution. All data points from a cluster have the same label in $\{-1,1\}$. We denote the index set corresponding to each state $j\in\{1,2,3\}$ as $D_j$ and the whole index set as $D=D_1\cup D_2\cup D_3$.
The data clusters are drawn in \Cref{fig:data}. 
\begin{figure}[ht]
	\includegraphics[width=6.5cm]{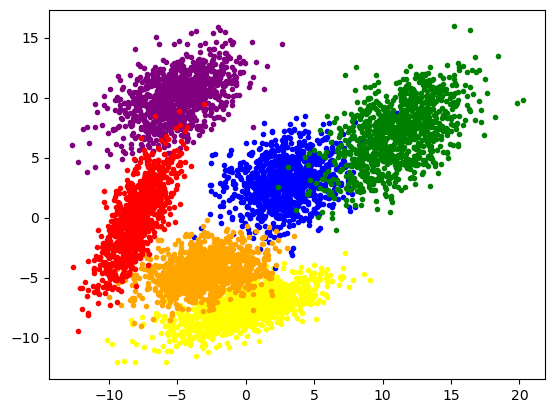}
	\centering
	\caption{Data Points}
	\label{fig:data}
\end{figure}
\noindent
The color corresponding to each state and label is summarized in Table \ref{tab:data}.
 \begin{table}[ht]
 	\begin{center}
 	\begin{tabular}{c|cc}\toprule
 	\backslashbox{state}{label} &1 &$-1$\\
 	\hline
 	1 & purple & yellow \\
 	2 & blue & orange \\
 	3 & green & red \\
 	\bottomrule
 \end{tabular}
 		\caption{Colors for (State, Label) Pairs}\label{tab:data}
 	\end{center}
 \end{table}
We then generate binary sensitive feature $z_i\in\{0,1\}$ randomly for each data point $x_i\in\mathbb{R}^d$, i.e., gender. We want the binary-sensitive feature of the data points to have low covariance with the results from our classifier. More details about how to create sensitive features are included in the supplement.

We use logistic regression classifiers with the following loss functions
$$f_j(w,b)=\frac{1}{|D_j|}\sum_{i\in D_j}\log(1+e^{-y_i(w^\top x_i+b)})$$
and constraint functions
\begin{align*}
    g_j(w,b) &= \frac{1}{|D_j|}\sum_{i\in D_j}(z_i-\bar{z})(w^\top x_i+b)-c\\
    h_j(w,b) &=-\frac{1}{|D_j|}\sum_{i\in D_j}(z_i-\bar{z})(w^\top x_i+b)-c
\end{align*}
for each state $j$, where $\bar{z}=\sum_{i\in D}z_i/|\mathcal{D}|$ and $c>0$.
Then the stochastic-constrained stochastic optimization problem is to minimize the usual logistic regression loss function under fairness constraints, which was proposed by \cite{fairness-constraints-jmlr}, as follows.
\begin{align}%
\begin{aligned}
    &\min_{(w,b)\in\mathcal{X}}\quad  \frac{1}{|D|}\sum_{i\in D}\log(1+e^{-y_i(w^\top x_i+b)})\notag\\
    &\text{s.t.} \quad -c\leq \frac{1}{|D|}\sum_{i\in D}(z_i-\bar{z})(w^\top x_i+b)\leq c. 
\end{aligned}
\end{align}
Solving this problem with our framework can be viewed as a distributed optimization scheme of \cite{ram09} and \cite{johansson2010}. Basically, 
there are agents 1,2, and 3 sharing $\bar{z}$, and agent $i$ has data ${D}_i$. After we update our parameters $w_t$ and $b_t$, they are sent to agent $i_t$, and the agent sends us back the information $\grad f_{i_t}(w_t,b_t), g_t(w_t, b_t), \grad g_{i_t}(w_t, b_t)$, $h_t(w_t, b_t), \grad h_{i_t}(w_t, b_t)$. Here, we may impose that the sequence of selected agents gives rise to an ergodic Markov chain.

The experimental setup involves two constraints. Although we consider the single constraint setting in the paper for simplicity, our results can be easily extended to the case with multiple constraint functions $\bar g_1,\ldots, \bar g_n$. For each $\bar g_i$, we obtain sampled functions $g_{t,i}$ for $t\in[T]$. Then for each $t$, we update $\bmx_t$ and $\{Q_{t,i}\}_{i=1}^n$ as
\begin{align*}
    \bmx_{t+1} &=\argmin_{\bmx\in\mathcal{X}}\left\{\left(V_t\grad f_t(\bmx_t)+\sum_{i=1}^n Q_{t,i} g_{t,i}(\bmx_t)\right)^\top \bmx+\alpha_t D(x,\bmx_t)\right\},\\
    Q_{t+1,i} &= \left[Q_{t,i}+g_{t,i}(\bmx_t)+\grad g_{t,i}(\bmx_t)^\top (\bmx_{t+1}-\bmx_t)\right]_+,\quad i=1,\ldots,n.
\end{align*}
For \Cref{alg0}, we use the parameters $V_t=(\tmix t)^\beta$ and $\alpha_t=\tmix t$ as before. For \Cref{alg1}, we define the MLMC estimator $g_{t,i}$ using $\{g_{t,i}^{(j)}\}_{j=1}^{N_t}$ for each $i\in [n]$ and define $$a_0=S_0=\delta,\quad a_t=\frac{F_t^2}{4}+\sum_{i=1}^n R^2 G_{t,i}^2+\sum_{i=1}^n H_{t,i}^2,\quad S_t=\delta+\sum_{s=1}^t a_s,$$
where $G_{t,i}=\|\grad g_{t,i}(\bmx_t)\|_*, H_{t,i}=|g_{t,i}(\bmx_t)|$.
The parameters $V_t$ and $\alpha_t$ are defined as in \eqref{eq:param}.

We compare our DPP-based algorithms with some existing algorithms developed for stochastic-constrained stochastic optimization with i.i.d. data. The list of algorithms that we tested is given as follows.
\begin{itemize}
    \item PD : Primal-dual method by \cite{long-term-1}.
    \item PD2 : Primal-dual method by \cite{long-term-2}.
    \item DPP : Drift-plus-penalty algorithm by \cite{OCO-stochastic}.
    \item EDPP-t : Ergodic drift-plus-penalty (\Cref{alg0}).
    \item EDPP-T : modification of \Cref{alg0} with non-adaptive parameters $V_t=\sqrt{\tmix T}$ and $\alpha_t=\tmix T$.
    \item MDPP : MLMC adaptive drift-plus-penalty (\Cref{alg1}).
\end{itemize}
For MDPP, we observed that the MLMC estimator usually has a high variance in practice, making experimental results unstable. Hence, we truncated MLMC sampling so that the number of samples per iteration is at most $2^4$. We chose $\delta=F^2/4+2R^2 G^2+2H^2$, for which we computed the constants $F,G,H>0$ such that $F_t\leq F, G_{t,i}\leq G, H_{t,i}\leq H$.

We set parameters to $p=0.001$ and $c=0.5$ with which we ran the list of algorithms with the same initial parameters and sequence of states. We first ran MDPP with 25,000 iterations which created 101,034 samples. The results on the optimality gap are summarized in \Cref{fig:opt_gap}, and the results on the infeasibility are presented in  \Cref{fig:c1v} and \Cref{tab:final-value}.
The results on the regret and the cumulative constraint violations are shown in  \Cref{fig:regret,fig:cc1v}, respectively.

\begin{figure}[ht]
    \vspace{.3in}
    \includegraphics[width=6.5cm]{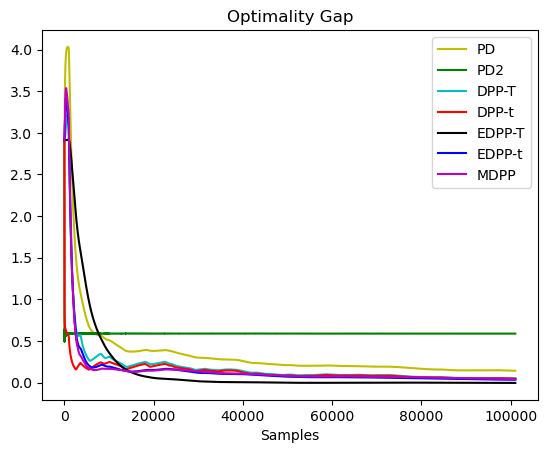}
    \quad 
    \includegraphics[width=6.5cm]{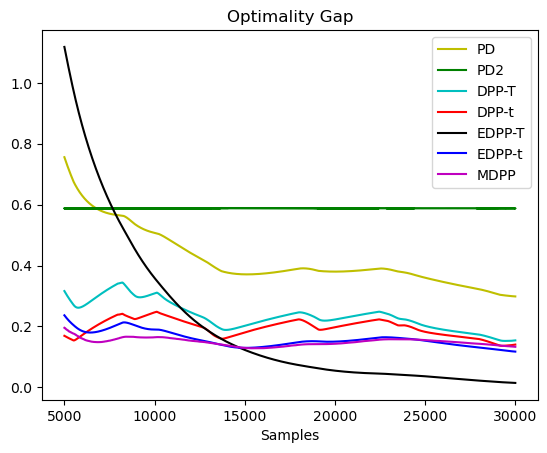}
    \centering
    \caption{Optimality Gap (Left), Enlarged Figure around 5,000 - 30,000 Samples (Right)}
    \label{fig:opt_gap}
\end{figure}

\begin{figure}[ht]
    \vspace{.3in}
    \includegraphics[width=6.35cm]{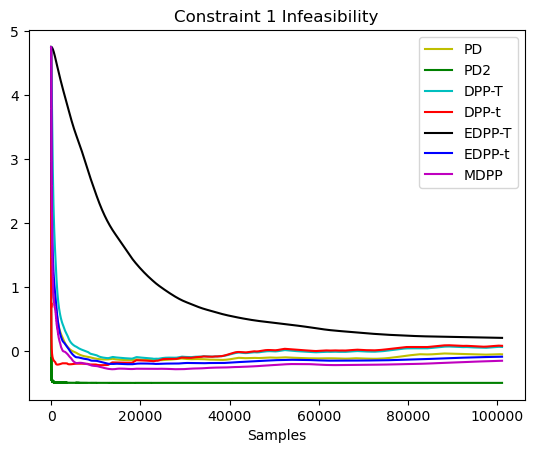}
    \quad 
     \includegraphics[width=6.5cm]{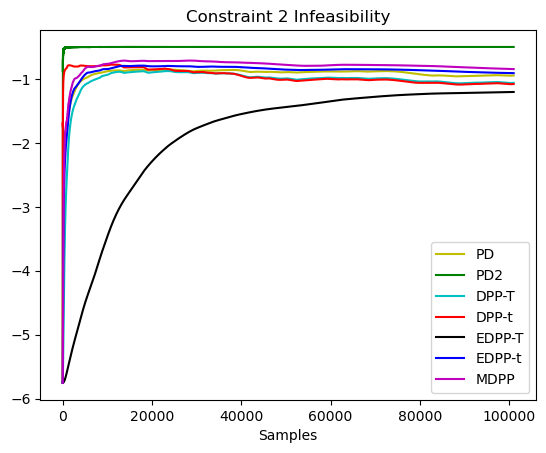}
    \centering
    \caption{Constraint 1 Infeasibility (Left), Constraint 2 Infeasibility (Right)}
    \label{fig:c1v}
\end{figure}

\begin{table}[ht]
   
    \begin{center}
    \begin{tabular}{c|c}\toprule
    Algorithm & Final values of\\ 
    & constraint 1 infeasibility\\
    \hline
    PD & $-0.0533$ \\
    PD2 & $-0.4997$ \\
    DPP-T & 0.0622 \\
    DPP-t & 0.0800\\
    EDPP-T & 0.2042\\
    EDPP-t & $-0.0904$\\
    MDPP & $-0.1536$\\
\bottomrule
    \end{tabular}
 \caption{Final Values of Constraint 1 Infeasibility}\label{tab:final-value}
    \end{center}
\end{table}

\begin{figure}[ht]
    \vspace{.3in}
    \includegraphics[width=6.5cm]{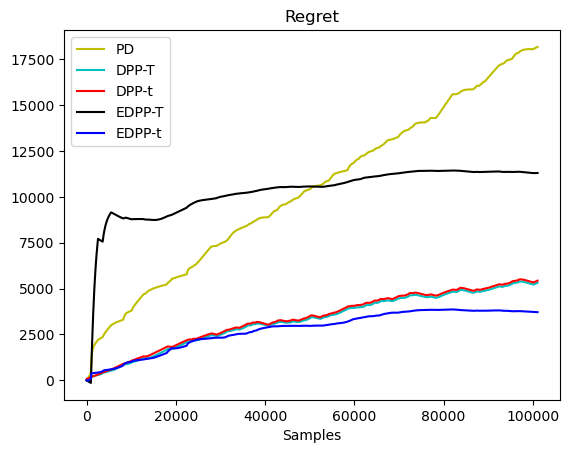}
    \centering
    \caption{Regret}
    \label{fig:regret}
\end{figure}

\begin{figure}[ht]
    \vspace{.3in}
    \includegraphics[width=6.4cm]{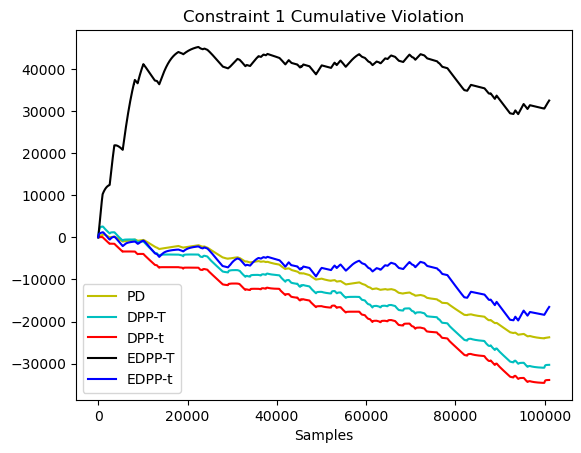}
    \quad 
    \includegraphics[width=6.5cm]{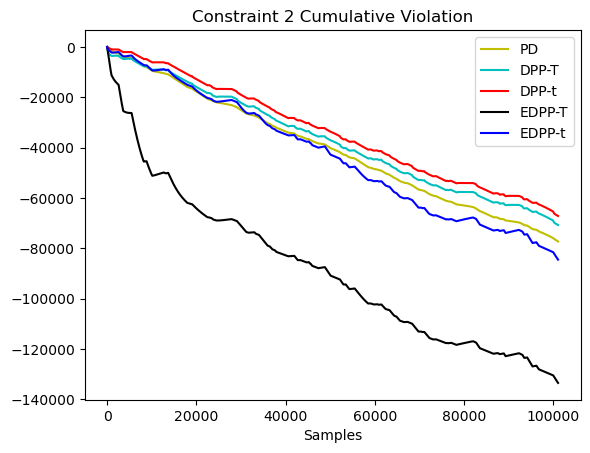}
    \centering
    \caption{Constraint 1 Cumulative Violation (Left), Constraint 2 Cumulative Violation (Right)}
    \label{fig:cc1v}
\end{figure}

As shown in the figures, Our algorithms (EDPP-T, EDPP-t, MDPP) outperform the other algorithms in terms of the optimality gap. DPP also shows a good optimality gap but it ends with a positive constraint 1 infeasibility. In contrast, EDPP-T, EDPP-t, and MDPP all end with a negative constraint infeasibility. Note that after 20,000 samples, EDPP-T achieves the smallest optimality gap, followed by EDPP-t and MDPP. However, \Cref{fig:c1v} shows that EDPP-T incurs a significantly higher infeasibility for constraint 1, given by
$$\frac{1}{|D|}\sum_{i\in D}(z_i-\bar{z})(w^\top x_i+b)\leq c$$
than the other algorithms. In contrast, EDPP-t outperforms DPP-T and DPP-t in terms of both the optimality gap and the infeasibility measure. It is also interesting to see that DPP-T and DPP-t behave similarly, while DPP-t performs  better than  DPP-T. 

\Cref{fig:regret} shows the regret values under various algorithms for the online convex optimization setting where each pair $(f_t,g_t)$ of functions corresponds to one data sample. That said, we excluded MDPP as it requires multiple samples in one round. In addition, we excluded PD2, which exhibits much higher regret values than the other algorithms, to focus on and better present the performance of the other algorithms. \Cref{fig:cc1v} shows that EDPP-T incurs a positive cumulative constraint 1 violation, while the other algorithms result in a negative cumulative constraint violation. We may check from \Cref{fig:regret,fig:cc1v} that EDPP-t performs the best for online convex optimization with ergodic constraints.

\section{Analysis of Ergodic Drift-Plus-Penalty for the Known Mixing Time Case}\label{sec:edpp-proof}

This section presents the proofs of \Cref{thm1,thm2,thm3} given in \Cref{sec:tmix}. \Cref{sec:thm1proof} contains the proof of \Cref{thm1} which provides regret and constraint violation bounds on the formulation of online convex optimization with ergodic constraints. Then \Cref{sec:thm2proof} presents the proof of \Cref{thm2} that gives bounds on the optimality gap and the feasibility gap for~\ref{scso}. In \Cref{sec:thm3proof}, we prove \Cref{thm3} for the case where Slater's condition is satisfied. As before, throughout this section, we denote by $\mathcal{F}_t$ the $\sigma$-field generated by the information accumulated up to time step $t$. That is,
$$\mathcal{F}_t=\sigma\left(\left\{\bm{\xi_1,\ldots, \xi_t}\right\}\right).$$
Moreover, $\mathbb{E}_t[\cdot]$ in this section refers to the conditional expectation with respect to $\mathcal{F}_t$, i.e., $\mathbb{E}_t\left[\cdot\right]=\mathbb{E}\left[\cdot \mid \mathcal{F}_t\right]$ (In \Cref{sec:unknown,sec:mdpp-proof}, $\mathcal{F}_t$ denotes the $\sigma$-field generated up to time step $t$ under the MLMC estimation scheme). Furthermore, $\mathbb{P}_{[t]}^s$ for $t>s$ denotes the probability measure of $\bm{\xi_s}$ conditional on $\mathcal{F}_t$.

\subsection{Ergodic Drift-Plus-Penalty for Online Convex Optimization with Ergodic Constraints}\label{sec:thm1proof}

The three important components of our analysis are the one (Lemma~\ref{Q_t-bound1}) bounding the term 
$\mathbb{E}\left[\sum_{t=1}^T Q_t g_t(\bmx)\right],$
the part (Lemma~\ref{Q_t-bound2}) providing an upper bound on the term
$\mathbb{E}\left[\sum_{t=1}^T ({Q_t}/{V_t}) g_t(\bmx)\right],$
and Lemma~\ref{lem:eq} that derives an upper bound on the expected queue size $\mathbb{E}\left[Q_t\right]$.
Then, plugging in the deduced bounds to the lemmas in \Cref{sec:adpp} analyzing the general template of adaptive drift-plus-penalty, we prove \Cref{thm1}.

By the update rule and the convexity of $g_t$, we deduce the following straightforward bound on $|Q_{t+1}-Q_t|$.
\begin{lemma}\label{lem:trivial}
For $t\geq 1$, we have $- H - GR\leq Q_{t+1}-Q_t\leq H$.
\end{lemma}
\begin{proof}
Note that we have
\begin{align*}
Q_{t+1}&=\left[Q_t+g_t(\bmc)+\grad g_t(\bmc)^T(\bmn-\bmc)\right]_+\leq \left[Q_t+g_t(\bmn)\right]_+\leq Q_t+H.
\end{align*}
For the lower bound, 
\begin{align*}
Q_{t+1}&\geq Q_t+g_t(\bmc)+\grad g_t(\bmc)^T(\bmn-\bmc)\\
&\geq Q_t- \left|g_t(\bmc)\right|-\left|\grad g_t(\bmc)\right\|_*\left\|\bmn-\bmc\right\|\\
&\geq Q_t - H - GR,
\end{align*}
as required.
\end{proof}
As $Q_1=0$, it follows that $Q_t\leq (t-1)H$. Recall that we denote $\tau=\tmix(T^{-1})$. %
The next lemma provides a bound on the term $\sum_{t=1}^T \mathbb{E}\left[ Q_t g_t(\bmx)\right]$. As mentioned in \Cref{sec:tmix-1}, we need to take into account that $g_1,\ldots, g_T$ are not i.i.d. while they are generated by an ergodic Markov chain.

\begin{lemma}\label{Q_t-bound1}
    For any $\bmx\in\mathcal{X}$ such that $\bar{g}(\bmx)\leq 0$,
    $$\mathbb{E}\left[\sum_{t=1}^T Q_t g_t(\bmx)\right]\leq H(H+GR)(\tau-1) T+\frac{2H}{T}\sum_{t=1}^T\mathbb{E}[Q_t].$$
\end{lemma}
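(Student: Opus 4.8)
The plan is to exploit two structural features of \Cref{alg0}. First, by the dual update the virtual queue $Q_t$ is measurable with respect to $\mathcal{F}_{t-1}$, and by Lemma~\ref{lem:trivial} it changes by at most $H+GR$ per step, so over any window of $\tau-1$ steps the queue is essentially frozen. Second, since $\tau=\tmix(1/T)$, after $\tau$ transitions the chain is within $1/T$ of $\mu$ in total variation; combining this with $|g(\bmx,\cdot)|\le H$ and the elementary bound $\left|\int h\,d(\mathbb{P}-\mathbb{Q})\right|\le 2\|h\|_\infty\lVert\mathbb{P}-\mathbb{Q}\rVert_{TV}$ gives, for the fixed benchmark $\bmx$ with $\bar g(\bmx)\le 0$ and for $t-\tau\ge 1$,
\[
\mathbb{E}\left[g_t(\bmx)\mid\mathcal{F}_{t-\tau}\right]\le \bar g(\bmx)+\frac{2H}{T}\le \frac{2H}{T},
\]
after using the Markov property to reduce the conditioning to the state $\bm{\xi_{t-\tau}}$ and identifying the conditional law of $\bm{\xi_t}$ as $\mathbb{P}^{\tau}(\delta_{\bm{\xi_{t-\tau}}},\cdot)$.

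Concretely, I would set $t':=\max\{t-\tau+1,1\}$ and decompose
\[
\sum_{t=1}^T Q_t g_t(\bmx)=\sum_{t=1}^T (Q_t-Q_{t'})\,g_t(\bmx)+\sum_{t=1}^T Q_{t'}\,g_t(\bmx).
\]
For the first sum, $|Q_t-Q_{t'}|\le(\tau-1)(H+GR)$ in all cases: for $t\ge\tau+1$ this telescopes Lemma~\ref{lem:trivial} over the $\tau-1$ steps from $t'$ to $t$, and for $t\le\tau$ it reads $Q_t-Q_1=Q_t\le(t-1)H\le(\tau-1)(H+GR)$; since $|g_t(\bmx)|\le H$, the first sum is at most $H(H+GR)(\tau-1)T$ deterministically. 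For the second sum, every term with $t\le\tau$ is zero because then $t'=1$ and $Q_1=0$; for $t\ge\tau+1$, $Q_{t'}=Q_{t-\tau+1}$ is nonnegative and $\mathcal{F}_{t-\tau}$-measurable with $t-\tau\ge 1$, so the tower rule together with the mixing estimate above gives $\mathbb{E}\left[Q_{t-\tau+1}g_t(\bmx)\right]=\mathbb{E}\left[Q_{t-\tau+1}\,\mathbb{E}[g_t(\bmx)\mid\mathcal{F}_{t-\tau}]\right]\le\frac{2H}{T}\mathbb{E}[Q_{t-\tau+1}]$; summing over $t$ and reindexing yields $\sum_{t}\mathbb{E}[Q_{t'}g_t(\bmx)]\le\frac{2H}{T}\sum_{t=1}^T\mathbb{E}[Q_t]$. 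Adding the two contributions gives the claim.

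The step I expect to be the main obstacle is the mixing estimate for $\mathbb{E}[g_t(\bmx)\mid\mathcal{F}_{t-\tau}]$: one has to pin down that conditioning on $\mathcal{F}_{t-\tau}$ leaves exactly $\tau$ transitions before $\bm{\xi_t}$, invoke $\lVert\mathbb{P}^{\tau}(\delta_{\bm{\xi_{t-\tau}}},\cdot)-\mu\rVert_{TV}\le d_{\text{mix}}(\tau)\le 1/T$, and handle the one-step index bookkeeping that forces conditioning on $\mathcal{F}_{t-\tau}$ rather than $\mathcal{F}_{t-\tau+1}$ — which is precisely why $Q_{t'}$ is taken as $Q_{t-\tau+1}$ and why the truncation $t'=\max\{t-\tau+1,1\}$ together with $Q_1=0$ is needed to discard the first $\tau$ terms cleanly. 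Verifying $Q_s\in\mathcal{F}_{s-1}$ (by induction along the primal/dual updates) and the remaining arithmetic are routine.
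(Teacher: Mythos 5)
Your proposal is correct and follows essentially the same route as the paper's proof: freeze the queue over a window of $\tau-1$ steps using the per-step bound from Lemma~\ref{lem:trivial}, then apply the total-variation mixing estimate $\lVert\mathbb{P}^{\tau}(\nu,\cdot)-\mu\rVert_{TV}\leq 1/T$ to the surviving term via the tower rule. The only differences are cosmetic bookkeeping — you shift the queue index backward ($Q_{t-\tau+1}$ paired with $g_t$) where the paper shifts the function index forward ($Q_t$ paired with $g_{t+\tau-1}$), and your truncation $t'=\max\{t-\tau+1,1\}$ absorbs the $T<\tau$ case that the paper treats separately.
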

\begin{proof}
    If $T<\tau$, then
    $$\sum_{t=1}^T Q_t g_t(\bmx)\leq \sum_{t=1}^T(t-1)H^2=\frac{T(T-1)H^2}{2}<H^2(\tau-1) T,$$
    and the statement follows.
    If $T\geq \tau$, then
    \begin{align*}
        &\mathbb{E}\left[\sum_{t=1}^T Q_t g_t(\bmx)\right]\\
        &=\sum_{t=1}^{\tau-1}\mathbb{E}[Q_t g_t(\bmx)]+\sum_{t=1}^{T-\tau+1}\mathbb{E}[Q_{t+\tau-1}g_{t+\tau-1}(\bmx)]\\
        &\leq \sum_{t=1}^{\tau-1} (t-1)H^2 + \sum_{t=1}^{T-\tau+1} \mathbb{E}[(Q_{t+\tau-1}-Q_t)g_{t+\tau-1}(\bmx)]+\sum_{t=1}^{T-\tau+1} \mathbb{E}[Q_t g_{t+\tau-1}(\bmx)]\\
        &\leq \frac{(\tau-1)(\tau-2)H^2}{2}+\sum_{t=1}^{T-\tau+1}(\tau-1) H(H+GR)+\sum_{t=1}^{T-\tau+1} \mathbb{E}\left[Q_t \mathbb{E}_{t-1}[g_{t+\tau-1}(\bmx)-\bar{g}(\bmx)]\right]\\
        &=\frac{(\tau-1)(2T-\tau)H(H+GR)}{2}+\sum_{t=1}^{T-\tau+1} \mathbb{E}\left[Q_t \mathbb{E}_{t-1}[g_{t+\tau-1}(\bmx)-\bar{g}(\bmx)]\right]
    \end{align*}
   where the second inequality holds because $(Q_{t+\tau-1}-Q_t)g_{t+\tau-1}(\bmx)\leq (H+GR)H$ due to Lemma~\ref{lem:trivial}, $\bar g(\bmx)\leq 0$, and $Q_t$ is $\mathcal{F}_{t-1}$-measurable. Here, to bound $\mathbb{E}_{t-1}[g_{t+\tau-1}(\bmx)-\bar{g}(\bmx)]$, we consider 
    \begin{align*}
        \mathbb{E}_{t-1}[g_{t+\tau-1}(\bmx)-\bar{g}(\bmx)]&= \mathbb{E}_{t-1}[g(\bmx,\xi_{t+\tau-1})-g(\bmx,\xi)]\\
        &\leq \int_A H (d\mathbb{P}_{[t-1]}^{t+\tau-1}-d\mu)+\int_{A^C} H (d\mathbb{P}_{[t-1]}^{t+\tau-1}-d\mu)\\
        &\leq 2H\lVert \mathbb{P}_{[t-1]}^{t+\tau-1}-\mu\rVert_{TV}
    \end{align*}
    where $\mathbb{P}_{[t-1]}^{t+\tau-1}$ is the probability measure of $\bm{\xi_{t+\tau-1}}$ conditional on $\mathcal{F}_{t-1}$, $\bmxi\sim \mu$, and $A$ is any measurable set. By the definition of $\tau = \tmix(T^{-1})$, it follows that
    $\lVert \mathbb{P}_{[t-1]}^{t+\tau-1}-\mu\rVert_{TV}\leq {1}/{T},$
    which implies that
    $\mathbb{E}_{t-1}[g_{t+\tau-1}(\bmx)-\bar{g}(\bmx)]\leq {2H}/{T}.$
    From this, we deduce the desired statement.
\end{proof}
Next we provide an upper bound on the term $\sum_{t=1}^T \mathbb{E}\left[ Q_t g_t(\bmx)/V_t\right]$, as we mentioned in \Cref{sec:tmix-1}.
\begin{lemma}\label{Q_t-bound2}
    For any $\bmx \in\mathcal{X}$ such that $\bar{g}(\bmx)\leq 0$,
    $$\mathbb{E}\left[\sum_{t=1}^T\frac{Q_t}{V_t}g_t(\bmx)\right]\leq \frac{2H(H+GR)(\tau+1)}{\tmix^\beta (1-\beta)}(T+1)^{1-\beta}.$$
\end{lemma}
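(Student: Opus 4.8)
The plan is to follow the proof of Lemma~\ref{Q_t-bound1} almost verbatim; the only new ingredient is that the deterministic weight $1/V_t=(\tmix t)^{-\beta}$ must be carried through each manipulation, and since $V_t=(\tmix t)^\beta$ is deterministic and nondecreasing in $t$, this causes no real difficulty.

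First I would dispose of the regime $T<\tau$. Since $Q_1=0$, Lemma~\ref{lem:trivial} gives $Q_t\leq (t-1)H$, and with $|g_t(\bmx)|\leq H$ and $V_t=(\tmix t)^\beta\geq \tmix^\beta t^\beta$ one gets $\tfrac{Q_t}{V_t}g_t(\bmx)\leq H^2 t^{1-\beta}/\tmix^\beta$ pointwise. Summing via $\sum_{t=1}^T t^{1-\beta}\leq (T+1)^{2-\beta}/(2-\beta)$ and using $T+1\leq\tau+1$ to write $(T+1)^{2-\beta}\leq(\tau+1)(T+1)^{1-\beta}$, together with $\tfrac{1}{2-\beta}\leq\tfrac{2}{1-\beta}$ and $H^2\leq H(H+GR)$, already yields the claimed bound in this case.

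For $T\geq\tau$ I would split, exactly as in Lemma~\ref{Q_t-bound1},
\[
\mathbb{E}\Big[\sum_{t=1}^T\tfrac{Q_t}{V_t}g_t(\bmx)\Big]=\sum_{t=1}^{\tau-1}\mathbb{E}\big[\tfrac{Q_t}{V_t}g_t(\bmx)\big]+\sum_{t=1}^{T-\tau+1}\mathbb{E}\big[\tfrac{Q_{t+\tau-1}}{V_{t+\tau-1}}g_{t+\tau-1}(\bmx)\big].
\]
The first sum is handled by the same crude bound as above, giving at most $H^2\tau^{2-\beta}/(\tmix^\beta(2-\beta))\leq H^2\tau(T+1)^{1-\beta}/(\tmix^\beta(2-\beta))$ since $\tau\leq T$. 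In the second sum I would write $Q_{t+\tau-1}=Q_t+(Q_{t+\tau-1}-Q_t)$. Iterating Lemma~\ref{lem:trivial} over the $\tau-1$ steps gives $|Q_{t+\tau-1}-Q_t|\leq(\tau-1)(H+GR)$, so the increment part contributes at most $\sum_{t=1}^{T-\tau+1}\tfrac{(\tau-1)(H+GR)H}{(\tmix(t+\tau-1))^\beta}=\tfrac{(\tau-1)(H+GR)H}{\tmix^\beta}\sum_{s=\tau}^{T}s^{-\beta}\leq \tfrac{(\tau-1)(H+GR)H\,T^{1-\beta}}{\tmix^\beta(1-\beta)}$. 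The leftover term $\sum_{t=1}^{T-\tau+1}\mathbb{E}[\tfrac{Q_t}{V_{t+\tau-1}}g_{t+\tau-1}(\bmx)]$ is where the Markov mixing enters: since $V_{t+\tau-1}$ is deterministic and $Q_t$ is $\mathcal{F}_{t-1}$-measurable, the tower property rewrites it as $\sum_t\mathbb{E}[\tfrac{Q_t}{V_{t+\tau-1}}\mathbb{E}_{t-1}[g_{t+\tau-1}(\bmx)]]$, and by the same total-variation estimate as in Lemma~\ref{Q_t-bound1}, namely $\lVert\mathbb{P}_{[t-1]}^{t+\tau-1}-\mu\rVert_{TV}\leq 1/T$ because $\tau=\tmix(T^{-1})$, together with $\bar g(\bmx)\leq 0$, we get $\mathbb{E}_{t-1}[g_{t+\tau-1}(\bmx)]\leq 2H/T$; hence this term is at most $\tfrac{2H}{T}\sum_t\mathbb{E}[\tfrac{Q_t}{V_{t+\tau-1}}]$. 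Using once more $Q_t\leq(t-1)H$ and $V_{t+\tau-1}\geq(\tmix t)^\beta$ bounds this by $\tfrac{2H^2}{T\tmix^\beta}\sum_{t=1}^{T}t^{1-\beta}\leq\tfrac{4H^2(T+1)^{1-\beta}}{\tmix^\beta(2-\beta)}$. Adding the three contributions and collapsing constants via $H^2\leq H(H+GR)$, $\tfrac{1}{2-\beta}\leq\tfrac{1}{1-\beta}$, $\tau\leq T$, and $T^{1-\beta}\leq(T+1)^{1-\beta}$ produces a bound of the form $\tfrac{O(1)\,H(H+GR)(\tau+1)(T+1)^{1-\beta}}{\tmix^\beta(1-\beta)}$, matching the statement.

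The main obstacle is not conceptual but a matter of discipline in the bookkeeping: one must resist plugging a sharp estimate for $\mathbb{E}[Q_t]$ into the leftover term, because this lemma is itself a prerequisite for such an estimate (Lemma~\ref{lem:eq}), so the deliberately loose bound $Q_t\leq(t-1)H$ is what is available, and it is exactly what forces the $(T+1)^{1-\beta}$ rate. The other point to watch is that shifting the index by $\tau-1$ turns $V_t$ into $V_{t+\tau-1}$ in the denominator; this is harmless here only because $V_t=(\tmix t)^\beta$ is deterministic and nondecreasing, a feature of \Cref{alg0} that would require a separate treatment for the adaptive parameters of \Cref{alg1}.
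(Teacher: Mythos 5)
Your proof is correct and follows essentially the same route as the paper's: the same split into the cases $T<\tau$ and $T\geq\tau$, the same re-indexing into the first $\tau-1$ terms plus the $\tau$-shifted tail, the same decomposition $Q_{t+\tau-1}=Q_t+(Q_{t+\tau-1}-Q_t)$ with Lemma~\ref{lem:trivial}, the same total-variation/mixing estimate giving $\mathbb{E}_{t-1}[g_{t+\tau-1}(\bmx)]\leq 2H/T$, and the same deliberately crude bound $Q_t\leq (t-1)H$ for the leftover sum. The only difference is a slightly looser absolute constant in the final collapse, which is immaterial since the lemma is only used inside $O(\cdot)$ bounds downstream.
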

\begin{proof}
Consider the case $T<\tau$ first. Note that
    $$\sum_{t=1}^T\frac{Q_t}{V_t}g_t(\bmx)\leq \frac{H^2}{\tmix^\beta}\sum_{t=1}^T t^{1-\beta}\leq \frac{H^2}{\tmix^{\beta}(2-\beta)}(T+1)^{2-\beta}\leq \frac{2H^2(\tau+1)}{\tmix^\beta (1-\beta)}(T+1)^{1-\beta}$$
    where the second inequality is due to Corollary~\ref{cor3} and the third inequality is from $T<\tau$. Next we consider the case $T\geq\tau$. Note that
    $$\mathbb{E}\left[\sum_{t=1}^T\frac{Q_t}{V_t}g_t(\bmx)\right]=\sum_{t=1}^{\tau-1} \mathbb{E}\left[\frac{Q_t}{V_t}g_t(\bmx)\right]+\sum_{t=1}^{T-\tau+1}\mathbb{E}\left[\frac{Q_{t+\tau-1}}{V_{t+\tau-1}}g_{t+\tau-1}(\bmx)\right].$$
    Here the first term on the right-hand side can be bounded as
    $$\sum_{t=1}^{\tau-1} \mathbb{E}\left[\frac{Q_t}{V_t}g_t(\bmx)\right]\leq \frac{H^2}{\tmix^\beta}\sum_{t=1}^{\tau-1} t^{1-\beta}\leq \frac{H^2}{\tmix^\beta (2-\beta)}\tau^{2-\beta}\leq \frac{H^2\tau}{\tmix^\beta (1-\beta)} (T+1)^{1-\beta}$$
    as above. Moreover, the second term can be bounded as follows.
    \begin{align*}
      &\sum_{t=1}^{T-\tau+1}\mathbb{E}\left[\frac{Q_{t+\tau-1}}{V_{t+\tau-1}}g_{t+\tau-1}(\bmx)\right]\\
        &\leq \sum_{t=1}^{T-\tau+1}\mathbb{E}\left[\frac{(Q_{t+\tau-1}-Q_t)}{V_{t+\tau-1}}g_{t+\tau-1}(\bmx)\right]+\sum_{t=1}^{T-\tau+1}\mathbb{E}\left[\frac{Q_t}{V_{t+\tau-1}} g_{t+\tau-1}(\bmx)\right]\\
        &\leq \frac{(\tau-1) H(H+GR)}{\tmix^\beta}\sum_{t=1}^{T-\tau+1}\frac{1}{(t+\tau-1)^\beta}+\sum_{t=1}^{T-\tau+1}\frac{1}{V_{t+\tau-1}}\mathbb{E}[Q_t \mathbb{E}_{t-1}[g_{t+\tau-1}(\bmx)-\bar{g}(\bmx)]]\\
        &\leq \frac{(\tau-1) H(H+GR)}{\tmix^\beta (1-\beta)}(T^{1-\beta}-(\tau-1)^{1-\beta})+\frac{2H}{T}\sum_{t=1}^{T-\tau+1}\mathbb{E}\left[\frac{Q_t}{V_{t+\tau-1}}\right]\\
        &\leq \frac{H(H+GR)}{\tmix^\beta (1-\beta)}\tau (T+1)^{1-\beta}+\frac{2H}{T}\sum_{t=1}^T \mathbb{E}\left[\frac{Q_t}{V_t}\right]\\
        &\leq \frac{H(H+GR)\tau}{\tmix^\beta (1-\beta)} (T+1)^{1-\beta}+\frac{2H^2}{\tmix^\beta T}\sum_{t=1}^T t^{1-\beta}\\
        &\leq \frac{H(H+GR)\tau}{\tmix^\beta (1-\beta)} (T+1)^{1-\beta}+\frac{2H^2}{\tmix^\beta T(2-\beta)}(T+1)^{2-\beta}\\
        &\leq \frac{H(H+GR)(\tau+1)}{\tmix^\beta (1-\beta)}(T+1)^{1-\beta},
    \end{align*}
    where the second inequality is from Lemma~\ref{lem:trivial} and $\bar g(\bmx)\leq 0$, the third inequality holds due to Corollary~\ref{cor3} and the choice of $\tau$, the sixth inequality comes from Corollary~\ref{cor3}, and the last inequality holds because $T(2-\beta)>(T+1)(1-\beta)$.
\end{proof}

Based on Lemmas~\ref{Q_t-bound1} and~\ref{Q_t-bound2}, we can now prove the first part of \Cref{thm1}, which upper bounds the regret of \cref{alg0} for online convex optimization with ergodic constraints.

\begin{proof}[The first part of Theorem \ref{thm1}] 
Lemma \ref{lem:regret} implies that
\begin{align*}
    \mathbb{E}\left[\sum_{t=1}^T f_t(\bmc)-\sum_{t=1}^T f_t(\bmo)\right]\leq \frac{\alpha_T}{V_T}R^2+\frac{F^2}{4}\sum_{t=1}^T\frac{V_t}{\alpha_t}+\frac{(H+GR)^2}{2}\sum_{t=1}^T \frac{1}{V_t}+\sum_{t=1}^T \mathbb{E}\left[\frac{Q_t}{V_t} g_t(\bmo)\right].
\end{align*}
Next using Lemma~\ref{Q_t-bound2} with $\bmo$ satisfying $\bar g(\bmo)\leq0$ and plugging in our choice of $\alpha_t$ and $V_t$, we deduce the following.
\begin{align*}
    &\mathbb{E}\left[\sum_{t=1}^T f_t(\bmc)-\sum_{t=1}^T f_t(\bmx)\right]\\
    &\leq R^2(\tmix T)^{1-\beta}+\frac{F^2 \tmix^{\beta-1}}{4\beta}T^\beta+\frac{(H+G R)^2\tmix^{-\beta}}{2(1-\beta)}T^{1-\beta}+\frac{2H(H+GR)(\tau+1)}{\tmix^\beta (1-\beta)}(T+1)^{1-\beta}\\
    &=O(\tmix^{-\beta}\tau T^{1-\beta}),
\end{align*}
as required.
\end{proof}

Next, we prove the second part of \Cref{thm1}, which gives an upper bound on constraint violation under \cref{alg0}. The following lemma provides a time-varying bound on the expected virtual queue size.

\begin{lemma}\label{lem:eq}
    For $t\in [T+1]$, $\mathbb{E}[Q_t]$ is bounded above by
    $$\frac{3}{2}\sqrt{2I(t-1)}+\frac{\beta+3}{\beta+1}\sqrt{\frac{2FR\tmix^\beta t^{\beta+1}}{1+\beta}}+\frac{3R}{2}\sqrt{2 \tmix (t-1)}+\frac{3H}{2}\sqrt{2(\tau-1) (t-1)}+4H$$
    where $I= H^2 +G^2R^2+ F^2/4$.
\end{lemma}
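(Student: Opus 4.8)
The plan is to run the standard drift-telescoping argument for the virtual queue $Q_t$, while (i) carefully carrying the time-varying parameters $V_s=(\tmix s)^\beta$, $\alpha_s=\tmix s$ through every sum, and (ii) routing the Markovian dependence of $g_s$ on $Q_s$ through Lemma~\ref{Q_t-bound1}; this produces a self-referential inequality for $\mathbb{E}[Q_t^2]$ that I then resolve by induction on $t$. Concretely, since $Q_1=0$ and $\Delta_s=\tfrac12(Q_{s+1}^2-Q_s^2)$, summing the drift bound (Lemma~\ref{lemma:drift-bound}) over $s=1,\dots,t-1$ gives $Q_t^2=2\sum_{s<t}\Delta_s\le 2\sum_{s<t}Q_s\big(g_s(\bms)+\grad g_s(\bms)^\top(\bmsn-\bms)\big)+\sum_{s<t}(H_s+G_sR)^2$. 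To control each increment I would fix a feasible benchmark $\bmo$ (any point with $\bar g(\bmo)\le 0$), use convexity of $g_s$ to pass to $g_s(\bmo)+\grad g_s(\bms)^\top(\bmsn-\bmo)$, and apply the three-point property of the Bregman divergence to the primal update, which gives $Q_s\grad g_s(\bms)^\top(\bmsn-\bmo)\le -V_s\grad f_s(\bms)^\top(\bmsn-\bmo)+\alpha_s\big(D(\bmo,\bms)-D(\bmo,\bmsn)\big)-\alpha_sD(\bmsn,\bms)$. Splitting $\|\bmsn-\bmo\|\le\|\bmsn-\bms\|+R$ and completing the square in $V_sF_s\|\bmsn-\bms\|-\alpha_s\|\bmsn-\bms\|^2\le V_s^2F_s^2/(4\alpha_s)$ then yields
$$Q_s\big(g_s(\bms)+\grad g_s(\bms)^\top(\bmsn-\bms)\big)\ \le\ Q_s g_s(\bmo)+\frac{V_s^2F_s^2}{4\alpha_s}+V_sFR+\alpha_s\big(D(\bmo,\bms)-D(\bmo,\bmsn)\big).$$

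Next I would evaluate the deterministic sums: $\sum_{s<t}V_s^2F_s^2/(4\alpha_s)\le\tfrac{F^2}{4}(t-1)$ (since $\tmix\ge1$ and $2\beta-1\le0$ force $\tmix^{2\beta-1}s^{2\beta-1}\le1$); $\sum_{s<t}V_sFR\le FR\tmix^\beta t^{\beta+1}/(\beta+1)$; the Bregman terms telescope by Abel summation (using that $\alpha_s$ is nondecreasing) to at most $\tmix R^2(t-1)$; and $\sum_{s<t}(H_s+G_sR)^2\le 2(H^2+G^2R^2)(t-1)$. Merging the $F^2/4$ and $H^2+G^2R^2$ contributions into $2I(t-1)$ with $I=H^2+G^2R^2+F^2/4$, taking expectations, and bounding $\mathbb{E}\big[\sum_{s<t}Q_s g_s(\bmo)\big]$ by Lemma~\ref{Q_t-bound1} applied with horizon $t-1$ — which still uses the single mixing quantity $\tau=\tmix(1/T)$ since $t-1\le T$ — gives the self-referential inequality
$$\mathbb{E}[Q_t^2]\ \le\ 2I(t-1)+\frac{2FR\tmix^\beta t^{\beta+1}}{\beta+1}+2\tmix R^2(t-1)+2H(H+GR)(\tau-1)(t-1)+\frac{4H}{t-1}\sum_{s=1}^{t-1}\mathbb{E}[Q_s].$$

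Finally I would close the recursion by induction on $t$, proving $\mathbb{E}[Q_t]\le B(t)$ for the stated bound $B(t)$. Using Jensen ($\mathbb{E}[Q_s]\le\sqrt{\mathbb{E}[Q_s^2]}\le B(s)$ by the inductive hypothesis) together with the elementary estimates $\sum_{s=1}^{t-1}s^{a/2}\le\tfrac{2}{a+2}t^{a/2+1}$ and $t^{a/2+1}/(t-1)\le 2t^{a/2}$ for $t\ge2$, the recursive term $\tfrac{4H}{t-1}\sum_{s<t}B(s)$ reduces to a sum of terms of the shape (constant)$\cdot H\cdot t^{a/2}$ with $a\in\{1,\beta+1\}$, plus $16H^2$; each of these is dominated by the corresponding cross term in the expansion of $B(t)^2$ (the cross term pairing the matching square-root piece of $B(t)$ with the $4H$ summand). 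This is exactly what forces the coefficients $3/2$, $(\beta+3)/(\beta+1)$, $3R/2$, $3H/2$ and the additive $4H$. Hence $\mathbb{E}[Q_t^2]\le B(t)^2$, so $\mathbb{E}[Q_t]\le B(t)$; the base case $t=1$ is immediate since $Q_1=0$.

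The main obstacle is this last step. One has to guess $B(t)$ in precisely the right additive sum-of-square-roots form so that the recursive term — which is the genuine price of the Markovian bias introduced by Lemma~\ref{Q_t-bound1} — is reabsorbed by the cross terms of $B(t)^2$, and verifying the inductive step requires keeping careful track of every constant (including bounding the $H(H+GR)(\tau-1)(t-1)$ term against a suitable multiple of $H^2(\tau-1)(t-1)$). A secondary subtlety, already flagged above, is that Lemma~\ref{Q_t-bound1} must be invoked with the shorter horizon $t-1$ while still being allowed to use the global mixing parameter $\tau=\tmix(1/T)$; everything else is routine bookkeeping with the sums $\sum s^\beta$ and $\sum s^{2\beta-1}$.
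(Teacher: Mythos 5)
Your proposal follows essentially the same route as the paper: telescope the drift bound (the paper's Lemmas~\ref{lemma:drift-bound}, \ref{lem:drift2} and~\ref{lem:q}), control $\sum_{s<t}\mathbb{E}[Q_s g_s(\bmx)]$ via Lemma~\ref{Q_t-bound1} applied with horizon $t-1$, and close the resulting self-referential inequality by induction on $t$; your choice to close it at the level of $\mathbb{E}[Q_t^2]\le B(t)^2$ by matching against the expansion of $B(t)^2$, rather than splitting $\sqrt{\sum_i A_i}\le\sum_i\sqrt{A_i}$ and applying AM--GM to the recursive term as the paper does, is an equivalent repackaging, as is your completing-the-square treatment of the $\grad f$ term (which is what makes the $F^2/4$ in $I$ do actual work). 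The constant-tracking caveat you flag --- absorbing $H(H+GR)(\tau-1)(t-1)$ into an $H^2$-type term --- is real, but the paper's own proof glosses over exactly the same point, so it does not distinguish the two arguments.
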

\begin{proof}
To prove the lemma, we argue by induction. Note that the statement of the lemma trivially holds when $t=1$ because $Q_1=0$. Suppose the statement holds for $t\leq s$  for some $s\geq 1$. What remains is to provide an upper bound on $\mathbb{E}[Q_{s+1}]$. Note that
\begin{align*}
    \mathbb{E}\left[Q_t\left(g_t(\bmc)+\grad g_t(\bmc)^\top (\bmx-\bmc)\right)\right]&=\mathbb{E}\left[Q_t\mathbb{E}_{t-1}\left[g_t(\bmc)+\grad g_t(\bmc)^\top (\bmx-\bmc)\right]\right]\\
    &\leq\mathbb{E}[Q_t \mathbb{E}_{t-1}[g_t(\bmx)]]\\
    &=\mathbb{E}[Q_t g_t(\bmx)].
\end{align*}
Then Lemma \ref{lem:q} together with Jensen's inequality implies the following.
\begin{align*}
    \mathbb{E}[Q_{s+1}]&\leq \sqrt{2s\left(H^2+R^2G^2\right)+2RF\sum_{t=1}^s V_t+2R^2\alpha_s+2\sum_{t=1}^s\mathbb{E}[Q_t g_t(\bmx)]}.
\end{align*}
Moreover, $\sum_{t=1}^s\mathbb{E}[Q_t g_t(\bmx)]$ can be upper bounded based on Lemma~\ref{Q_t-bound1}. Then it follows that

\begin{align*}
    &\mathbb{E}[Q_{s+1}]\\
    &\leq \sqrt{2Is+\frac{2FR\tmix^\beta (s+1)^{\beta+1}}{1+\beta}+{2R^2 \tmix s}+{2H(H+GR)(\tau-1)s+\frac{4H}{s}\sum_{t=1}^s \mathbb{E}[Q_t]}}\\
    &\leq \sqrt{2Is}+\sqrt{\frac{2FR\tmix^\beta (s+1)^{\beta+1}}{1+\beta}}+\sqrt{2R^2 \tmix s}+\sqrt{2H^2(\tau-1) s}+2\sqrt{\frac{H}{s}\sum_{t=1}^s\mathbb{E}[Q_t]}\\
    &\leq \sqrt{2Is}+\sqrt{\frac{2FR\tmix^\beta (s+1)^{\beta+1}}{1+\beta}}+\sqrt{2R^2 \tmix s}+\sqrt{2H^2(\tau-1) s}+2H+\frac{1}{2s}\sum_{t=1}^s\mathbb{E}[Q_t]
\end{align*}
for $s\leq T$. By the induction hypothesis, it follows that $\mathbb{E}[Q_t]$ for any $1\leq t\leq s$ is upper bounded by
    \begin{align*} 
    &\frac{3}{2}\sqrt{2I(t-1)}+\frac{\beta+3}{\beta+1}\sqrt{\frac{2FR\tmix^\beta t^{\beta+1}}{1+\beta}}+\frac{3R}{2}\sqrt{2 \tmix (t-1)}+\frac{3H}{2}\sqrt{2(\tau-1) (t-1)}+4H\\
        &\leq \frac{3}{2}\sqrt{2Is}+\frac{\beta+3}{\beta+1}\sqrt{\frac{2FR\tmix^\beta (s+1)^{\beta+1}}{1+\beta}}+\frac{3R}{2}\sqrt{2 \tmix s}+\frac{3H}{2}\sqrt{2(\tau-1) s}+4H.
    \end{align*}
    This leads to the desired upper bound on $\mathbb{E}[Q_{s+1}]$.
\end{proof}We are now ready to prove the second part of Theorem \ref{thm1}, which proves the constraint violation bound of \Cref{alg0}.

\begin{proof}[The second part of Theorem \ref{thm1}]
   Combining Lemmas~\ref{lem:constraint} and~\ref{lem:eq}, we deduce that
    \begin{align*}
        \mathbb{E}\left[\sum_{t=1}^T g_t(\bmc)\right] &\leq \mathbb{E}[Q_{T+1}]+\frac{FG}{2}\sum_{t=1}^T\frac{V_t}{\alpha_t}+\frac{G^2}{2}\sum_{t=1}^T\frac{\mathbb{E}[Q_t]}{\alpha_t}\notag=O\left(\tmix^{\beta/2}T^{\beta/2+1/2}+\sqrt{(\tau-1)T}\right), 
    \end{align*}
    as required.
\end{proof}

\subsection{Ergodic Drift-Plus-Penalty for Stochastic-Constrained Stochastic Optimization}\label{sec:thm2proof}

In this section, we provide our formal proof of \Cref{thm2}. To better organize and present the result, we divide the analysis into two, one of which is for bounding the optimality gap while the other is for bounding the feasibility gap. The main idea is to use the performance bounds given in \Cref{thm1} and relate them to the optimality gap and the feasibility gap. The relation between them is not as clear as in the i.i.d. setting, but we use the mixing property of ergodic Markov chains.
\begin{proof}[The first part of \Cref{thm2}]
For $T\geq\tau=\tmix(T^{-1})$, we have the following
\begin{align*}
    \sum_{t=1}^T\left(\bar{f}(\bmc)-\bar{f}(\bmx)\right)&= \sum_{t=1}^{T-\tau+1}\left(\bar{f}(\bmc)-\bar{f}(\bmx)-f_{t+\tau-1}(\bmc)+f_{t+\tau-1}(\bmx)\right)\\
    &\quad +\sum_{t=1}^{T-\tau+1}\left(f_{t+\tau-1}(\bmc)-f_{t+\tau-1}(\bm{x_{t+\tau-1}})\right)\\
    &\quad +\sum_{t=\tau}^T\left(f_t(\bmc)-f_t(\bmx)\right)+\sum_{t=T-\tau+2}^T\left(\bar{f}(\bmc)-\bar{f}(\bmx)\right).
\end{align*}
We consider the four parts of the right-hand side separately. Here, the first part satisfies
\begin{align*}
    &\mathbb{E}_{t-1}\left[\bar{f}(\bmc)-\bar{f}(\bmx)-f_{t+\tau-1}(\bmc)+f_{t+\tau-1}(\bmx)\right]\\&=\int \left(f(\bmc,\xi)-f(\bmx,\xi)\right) \left(d\mu(\xi)-d\mathbb{P}_{[t-1]}^{t+\tau-1}(\xi)\right)\\
    &\leq FR\int|d\mu(\xi)-d\mathbb{P}_{[t-1]}^{t+\tau-1}(\xi)|\\
    &\leq 2FR\lVert \mu-\mathbb{P}_{[t-1]}^{t+\tau-1}\rVert_{TV}\\
    &\leq\frac{2FR}{T}.
\end{align*}
Hence, it follows that $\sum_{t=1}^{T-\tau+1}\mathbb{E}[\bar{f}(\bmc)-\bar{f}(\bmx)-f_{t+\tau-1}(\bmc)+f_{t+\tau-1}(\bmx)]\leq 2FR$. To bound the second part, we consider the following.
\begin{align*}
    \mathbb{E}[f_{t+\tau-1}(\bmc)-f_{t+\tau-1}(\bm{x_{t+\tau-1}})]
    &=\sum_{s=t}^{t+\tau-2}\mathbb{E}[f_{t+\tau-1}(\bm{x_s})-f_{t+\tau-1}(\bm{x_{s+1}})]\notag\\
    &\leq \sum_{s=t}^{t+\tau-2}\mathbb{E}[F\lVert\bm{x_s}-\bm{x_{s+1}}\rVert]\\
    &\leq \sum_{s=t}^{t+\tau-2}\frac{F}{2\alpha_s}\mathbb{E}[V_s F+Q_s G]\\
    &=\frac{F^2 \tmix^{\beta-1}}{2}\sum_{s=t}^{t+\tau-2}s^{\beta-1}+\frac{FG}{2\tmix}\sum_{s=t}^{t+\tau-2}\frac{\mathbb{E}[Q_s]}{s}\\
    &\leq \frac{F^2\tmix^{\beta-1}}{2\beta}\left((t+\tau-2)^\beta-(t-1)^\beta\right)+\frac{FG}{2\tmix}\sum_{s=t}^{t+\tau-2}\frac{\mathbb{E}[Q_s]}{s}
\end{align*}
where the last inequality holds due to Corollary~\ref{cor3}. Here, we consider the following to provide an upper bound on the second term on the right-most side.
\begin{align*}
  \sum_{s=t}^{t+\tau-2}\frac{\mathbb{E}[Q_s]}{s}
   &\leq 3\left(\sqrt{2I(t+\tau-2)}-\sqrt{2I(t-1)}\right)\\
    &\quad +\frac{2(\beta+3)}{(\beta+1)^2}\sqrt{\frac{2FR\tmix^\beta}{\beta+1}}\left(\sqrt{(t+\tau-2)^{\beta+1}}-\sqrt{(t-1)^{\beta+1}}\right)\\
    &\quad +3R\left(\sqrt{2 \tmix(t+\tau-2)}-\sqrt{2\tmix(t-1)}\right)\\
    &\quad +3H\left(\sqrt{2(\tau-1)(t+\tau-2)}-\sqrt{2(\tau-1)(t-1)}\right)\\
    &\quad +4H\log \left[\frac{t+\tau-2}{\sigma(t-1)}\right].
\end{align*}
Here, we handle the above terms in the following manner. Note that
\begin{align*}
    &\sum_{t=1}^{T-\tau+1}\left(\sqrt{(t+\tau-2)^{\beta+1}}-\sqrt{(t-1)^{\beta+1}}\right)\\
    &=(T-1)^{\frac{\beta+1}{2}}+\cdots+(T-\tau+1)^{\frac{\beta+1}{2}}-(\tau-2)^{\frac{\beta+1}{2}}-\cdots-1\\
    &\leq (\tau-1)T^{\beta/2+1/2}
\end{align*}
holds and that
\begin{align*}
    \sum_{t=1}^{T-\tau+1}\left(\sqrt{(t+\tau-2)}-\sqrt{(t-1)}\right)&=(T-1)^{\frac{1}{2}}+\cdots+(T-\tau+1)^{\frac{1}{2}}-(\tau-2)^{\frac{1}{2}}-\cdots-1\\
    &\leq (\tau-1)T^{1/2}.
\end{align*}
The resulting terms form dominant terms,
which means that \begin{align*}
&\sum_{t=1}^{T-\tau+1}\mathbb{E}[f_{t+\tau-1}(\bmc)-f_{t+\tau-1}(\bm{x_{t+\tau-1}})]\\
&=O\left(\tmix^{\beta/2-1}(\tau-1)T^{\beta/2+1/2}+\tmix^{-1}(\tau-1)^{3/2}T^{1/2}\right).\end{align*}
Moreover, by  \Cref{thm1} and the triangular inequality, the third part is bounded as
$$\mathbb{E}\left[\sum_{t=\tau}^T \left(f_t(\bmc)-f_t(\bmx)\right)\right] =O\left(\tmix^{-\beta}\tau T^{1-\beta}+(\tau-1)\right).$$
The fourth part can be bounded as follows.
$$\mathbb{E}\left[\sum_{t=T-\tau+2}^T \left(\bar{f}(\bmc)-\bar{f}(\bmx)\right)\right]\leq RF(\tau-1).$$
Combining the bounds on the four parts, we can conclude that
for any $\bmx\in\mathcal{X}$ such that $\bar{g}(\bmx)\leq 0$,
\begin{align*}
&\mathbb{E}\left[\sum_{t=1}^T\left(\bar{f}(\bmc)-\bar{f}(\bmx)\right)\right]\\
&= O\left(\tmix^{-\beta}\tau T^{1-\beta}+\tmix^{\beta/2-1}(\tau-1)T^{\beta/2+1/2}+\tmix^{-1}(\tau-1)^{3/2}T^{1/2}+\tau\right),
\end{align*}
which implies
\begin{align*}
    \mathbb{E}\left[\bar{f}({\bm{\bar x_T}})-\bar{f}(\bmx)\right]&\leq\frac{1}{T}\mathbb{E}\left[\sum_{t=1}^T \left(\bar{f}(\bmc)-\bar{f}(\bmx)\right)\right]\\
    &=O\left(\frac{\tau}{\tmix^\beta T^\beta}+\frac{\tau-1}{\tmix}\frac{\tmix^{\beta/2}}{T^{(1-\beta)/2}}+\frac{(\tau-1)^{3/2}}{\tmix T^{1/2}}+\frac{\tau}{T}\right),
\end{align*}
as required.
\end{proof}

Next we prove the feasibility gap bound of \Cref{alg0}, which is given as the second part of \Cref{thm2}.

\begin{proof}[Proof of the second part of Theorem \ref{thm2}]
For $T\geq\tau=\tmix(T^{-1})$, we have the following,
\begin{align*}
    \sum_{t=1}^T \bar{g}(\bmc) &= \sum_{t=1}^{T-\tau+1}\left(\bar{g}(\bmc)-g_{t+\tau-1}(\bmc)\right)+\sum_{t=1}^{T-\tau+1}\left(g_{t+\tau-1}(\bmc)-g_{t+\tau-1}(\bm{x_{t+\tau-1}})\right)\\
    &\quad +\sum_{t=\tau}^T g_t(\bmc)+\sum_{t=T-\tau+2}^T \bar{g}(\bmc)
\end{align*}
where the right-hand side consists of four terms. We separately upper bound the four parts of the right-hand side. As before, the first part satisfies
\begin{align*}
    \mathbb{E}_{t-1}[\bar{g}(\bmc)-g_{t+\tau-1}(\bmc)]&=\int g(\bmc,\xi)\left(d\mu(\xi)-d\mathbb{P}_{[t-1]}^{t+\tau-1}(\xi)\right)\\
    &\leq H\int|d\mu(\xi)-d\mathbb{P}_{[t-1]}^{t+\tau-1}(\xi)|\\
    &\leq 2H\lVert\mu-\mathbb{P}_{[t-1]}^{t+\tau-1}\rVert_{TV}\\
    &\leq\frac{2H}{T}.
\end{align*}
This implies that $\sum_{t=1}^{T-\tau+1}\mathbb{E}[\bar{g}(\bmc)-g_{t+\tau-1}(\bmc)]\leq 2H$. Next, the second part satisfies
\begin{align*}
    \mathbb{E}[g_{t+\tau-1}(\bmc)-g_{t+\tau-1}(\bm{x_{t+\tau-1}})]
    &=\sum_{s=t}^{t+\tau-2}\mathbb{E}[g_{t+\tau-1}(\bm{x_s})-g_{t+\tau-1}(\bm{x_{s+1}})]\notag\\
    &\leq \sum_{s=t}^{t+\tau-2}\mathbb{E}[G\lVert\bm{x_s}-\bm{x_{s+1}}\rVert]
\end{align*}
As in the proof of the first part of \Cref{thm2}, we deduce that
$$\sum_{t=1}^{T-\tau+1}\mathbb{E}[g_{t+\tau-1}(\bmc)-g_{t+\tau-1}(\bmc)]\leq O\left(\tmix^{\beta/2-1}(\tau-1)T^{\beta/2+1/2}+\tmix^{-1}(\tau-1)^{3/2}T^{1/2}\right).$$
Moreover, it follows from \Cref{thm1} and the triangular inequality that the third part can be bounded as
\begin{equation*}%
\mathbb{E}\left[\sum_{t=\tau+2}^T g_t(\bmc)\right]=O\left(\tmix^{\beta/2}T^{\beta/2+1/2}+\sqrt{(\tau-1)T}+\tau-1\right).
\end{equation*}
Lastly, the following provides an upper bound on the fourth part.
$$\mathbb{E}\left[\sum_{t=T-\tau+2}^T \bar{g}(\bmc)\right]\leq H(\tau-1).$$
Combining the bounds on the four parts, we have that
$$\mathbb{E}\left[\sum_{t=1}^T \bar{g}(\bmc)\right]\leq O\left(\tmix^{\beta/2}T^{\beta/2+1/2}+\tmix^{\beta/2-1}(\tau-1)T^{\beta/2+1/2}+\tmix^{-1}(\tau-1)^{3/2}T^{1/2}+\tau\right),$$
implying in turn that
\begin{equation*}
    \mathbb{E}\left[\bar{g}({\bm{\bar x_T}})\right]\leq\frac{1}{T}\mathbb{E}\left[\sum_{t=1}^T \bar{g}(\bmc)\right]= O\left(\frac{\tmix^{\beta/2-1}\tau}{T^{(1-\beta)/2}}+\frac{(\tau-1)^{3/2}}{\tmix T^{1/2}}+\frac{\tau}{T}\right),
\end{equation*}
as required.
\end{proof}

\subsection{Ergodic Drift-Plus-Penalty under Slater's Condition}\label{sec:thm3proof}

In this section, we prove \Cref{thm3}, which analyzes the performance of \Cref{thm3} under Slater's constraint qualification stated in \Cref{ass:slater}. We will see that Slater's condition does lead to improvement.
Basically, we deduce a reduction on $\mathbb{E}[Q_t]$. To argue this, we follow the proof path of \cite{OCO-stochastic}. We first present a lemma, which is analogous to \citep[Lemma 5]{OCO-stochastic}. The difference is that we use a time-varying parameter $\theta(t)$ which allows time-varying algorithm parameters $V_t$ and $\alpha_t$. In fact, its proof is similar to that of \citep[Lemma 5]{OCO-stochastic}, so we defer it to the appendix (\Cref{sec:appendix-drift}). 

\begin{lemma}\label{lem:process}
    Let $\{Z_t, t\geq 0\}$ be a discrete-time stochastic process adapted to a filtration $\{\mathcal{W}_t, t\geq 0\}$ with $Z_0=0$ and $\mathcal{W}_0=\{\emptyset, \mathcal{S}\}$.
    Suppose there exists a positive integer $t_0$, real numbers $0<\zeta\leq\delta_{\max}$, and a non-decreasing function $\theta(t)>0$ such that
    \begin{align*}
        |Z_{t+1}-Z_t| &\leq\delta_{\max},\\
        \mathbb{E}[Z_{t+t_0}-Z_t|\mathcal{W}_t]&\leq
        \begin{cases}
            t_0\delta_{\max}, &\quad \text{if }Z_t<\theta(t)\\
            -t_0\zeta, &\quad \text{if }Z_t\geq\theta(t)
        \end{cases}
    \end{align*}
    for all positive integer $t$. Then,
    $$\mathbb{E}[Z_t]\leq \theta(t)+t_0 \delta_{\max}+t_0\frac{4\delta_{\max}^2}{\zeta}\log\left(\frac{8\delta_{\max}^2}{\zeta^2}\right)$$
    for all positive integer $t$.
\end{lemma}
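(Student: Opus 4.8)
The plan is to run the Hajek-type exponential-potential argument that Yu and Neely use for \citep[Lemma~5]{OCO-stochastic}, the only new feature being that the threshold $\theta(t)$ now moves with $t$; monotonicity of $\theta$ is precisely what lets the induction close. Fix $r = \zeta/(4 t_0 \delta_{\max}^2)$ and set $h_t := \mathbb{E}[e^{r Z_t}]$. Since $\zeta \le \delta_{\max}$ we have $r t_0 \delta_{\max} = \zeta/(4\delta_{\max}) \le 1/4$, which keeps every exponential that appears under control.

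The first step is a one-block contraction estimate. Writing $X = Z_{t+t_0} - Z_t$, the per-step bound gives $|X| \le t_0 \delta_{\max}$. Using $e^y \le 1 + y + \tfrac{y^2}{2} e^{|y|}$ and then $\mathbb{E}[\,\cdot \mid \mathcal{W}_t]$,
$$\mathbb{E}\!\left[e^{rX}\mid \mathcal{W}_t\right] \le 1 + r\,\mathbb{E}[X\mid \mathcal{W}_t] + \tfrac{r^2}{2}(t_0\delta_{\max})^2 e^{r t_0 \delta_{\max}}.$$
On the event $\{Z_t \ge \theta(t)\}$ the drift hypothesis bounds $\mathbb{E}[X\mid\mathcal{W}_t] \le -t_0\zeta$, so the right-hand side is at most $\rho := 1 - r t_0\zeta + \tfrac{r^2}{2}(t_0\delta_{\max})^2 e^{r t_0\delta_{\max}}$; plugging in $r$ and $e^{r t_0\delta_{\max}}\le e^{1/4}$ yields $\tfrac{\zeta^2}{8\delta_{\max}^2} \le 1-\rho \le \tfrac{\zeta^2}{4\delta_{\max}^2} < 1$, so $\rho\in(0,1)$. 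On the complementary event $\{Z_t < \theta(t)\}$ I only use the crude bounds $e^{rX} \le e^{r t_0\delta_{\max}}$ and $e^{rZ_t} \le e^{r\theta(t)}$.

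The second step is a recursion for $h_t$. Splitting on the two events, pulling out $e^{rZ_t}$ (which is $\mathcal{W}_t$-measurable), and taking expectations,
$$h_{t+t_0} = \mathbb{E}\!\left[e^{rZ_t}\,\mathbb{E}[e^{rX}\mid\mathcal{W}_t]\right] \le \rho\, h_t + e^{r t_0 \delta_{\max}} e^{r\theta(t)} \le \rho\, h_t + e^{r t_0 \delta_{\max}} e^{r\theta(t+t_0)},$$
the last inequality using that $\theta$ is non-decreasing. I then claim $h_t \le e^{r t_0\delta_{\max}} e^{r\theta(t)}/(1-\rho)$ for every $t\ge 1$, proved by induction in steps of $t_0$: for the base cases $1\le t \le t_0$, bounded increments and $Z_0=0$ give $|Z_t|\le t_0\delta_{\max}$, hence $h_t \le e^{r t_0\delta_{\max}} \le e^{r t_0\delta_{\max}} e^{r\theta(t)}/(1-\rho)$; for the inductive step the recursion together with $\theta(t)\le\theta(t+t_0)$ and $0<1-\rho<1$ gives $\rho\,h_t + e^{r t_0\delta_{\max}}e^{r\theta(t+t_0)} \le e^{r t_0\delta_{\max}}e^{r\theta(t+t_0)}/(1-\rho)$. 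Finally Jensen's inequality gives $e^{r\mathbb{E}[Z_t]} \le h_t$, so $\mathbb{E}[Z_t] \le \theta(t) + t_0\delta_{\max} + \tfrac1r \log\tfrac{1}{1-\rho}$; since $\tfrac1r = 4t_0\delta_{\max}^2/\zeta$ and $1/(1-\rho)\le 8\delta_{\max}^2/\zeta^2$, this is exactly the asserted bound.

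The argument is essentially routine once the exponential potential and the value of $r$ are fixed; the only genuine subtlety—and the place to be careful—is the interaction between the moving threshold $\theta(t)$ and the telescoping: the induction hypothesis must carry $e^{r\theta(t)}$ on the right-hand side, and it closes only because $\theta$ is non-decreasing, so a jump in $\theta$ always helps rather than hurts. A secondary point is the constant bookkeeping needed so that the logarithmic term comes out as $\tfrac{4\delta_{\max}^2}{\zeta}\log\tfrac{8\delta_{\max}^2}{\zeta^2}$ rather than merely $\Theta\!\big(\tfrac{\delta_{\max}^2}{\zeta}\log\tfrac{\delta_{\max}^2}{\zeta^2}\big)$, which is what pins down the choice $r = \zeta/(4 t_0\delta_{\max}^2)$.
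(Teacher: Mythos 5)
Your proposal is correct and follows essentially the same route as the paper's proof: the Hajek-style exponential potential with $r=\zeta/(4t_0\delta_{\max}^2)$, a one-block contraction split on $\{Z_t\ge\theta(t)\}$ versus $\{Z_t<\theta(t)\}$, an induction in steps of $t_0$ that closes because $\theta$ is non-decreasing, and Jensen at the end. The only cosmetic difference is the elementary bound ($e^y\le 1+y+\tfrac{y^2}{2}e^{|y|}$ versus the paper's $e^x\le 1+x+2x^2$ for $|x|\le 1$), which changes $\rho$ slightly but still yields $1/(1-\rho)\le 8\delta_{\max}^2/\zeta^2$ and hence the identical final constant.
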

Here, the important part is that although $\theta(t)$ is time-varying, the parameter $t_0$ is some fixed value that does not depend on $t$. That is why the condition in Lemma~\ref{lem:process} can be recursively applied. 

Lemma \ref{lem:process} implies that if the stochastic process given by $\{Q_t, t\geq 0\}$ where $Q_0=0$ satisfies the drift condition as in Lemma~\ref{lem:process} with appropriate parameters $\delta_{\max}$, $t_0$, and $\zeta$, then the expected queue size $\mathbb{E}\left[Q_t\right]$ can be bounded. Moreover, by properly setting $\theta(t)$ as well as the parameters, we may control the size of $\mathbb{E}\left[Q_t\right]$. The next lemma shows that the stochastic process given by $\{Q_t, t\geq 0\}$ indeed satisfies the desired drift condition.

While proving the lemma, we need to consider the term
$$\mathbb{E}_{t-1}\left[ Q_i g_i(\bm{\hat x})\right]$$ for $i\geq t$
where $\bm{\hat x}$ is the solution satisfying Slater's constraint qualification, i.e. $\bar g(\bm{\hat x})\leq -\epsilon$. Here, we need to relate $\bar g(\bm{\hat x})\leq -\epsilon$ and the term $\mathbb{E}_{t-1}\left[ Q_i g_i(\bm{\hat x})\right]$, from which we deduce the drift condition. Although $g_i(\bm{\hat x})$ is not necessarily negative, we again use the property of ergodic Markov chains that the distribution of $g_i(\bm{\hat x})$ conditional on $\mathcal{F}_{t-1}$ gets close to the stationary distribution for a sufficiently large $i$.

\begin{lemma}\label{slater-drift-ergodic}
    Let $T\geq 4H/\epsilon$, $t_0>2\tau=2\tmix(T^{-1})$, and 
    \begin{align*}
        \theta_{t_0}(t)&= \frac{2}{\epsilon(t_0-2\tau)}\left(\epsilon(t_0-\tau)^2(H+GR)+4\tau(t_0+t-1)(H+GR)^2\right)\\
        &\quad +\frac{2}{\epsilon(t_0-2\tau)}\left(2t_0(H^2+G^2 R^2)+2RF\tmix^\beta t_0(t+t_0-1)^{\beta}\right).
    \end{align*}
    Then \Cref{alg0} under \Cref{ass:slater} satisfies
    \begin{align*}
    |Q_{t+1}-Q_t|&\leq H+GR,\\
    \mathbb{E}_{t-1}[Q_{t+t_0}-Q_t]&\leq 
        \begin{cases}
            (H+GR)t_0, &\quad \text{if }Q_t<\theta_{t_0}(t)\\
            -\epsilon t_0/4, &\quad \text{if }Q_t\geq \theta_{t_0}(t).
        \end{cases}
        \end{align*}
\end{lemma}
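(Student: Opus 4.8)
The first inequality $|Q_{t+1}-Q_t|\le H+GR$ is exactly Lemma~\ref{lem:trivial}, and it immediately yields the first branch of the drift condition: summing $|Q_{i+1}-Q_i|\le H+GR$ over $i=t,\dots,t+t_0-1$ gives $Q_{t+t_0}-Q_t\le(H+GR)t_0$ deterministically, hence in conditional expectation, with no use of $Q_t<\theta_{t_0}(t)$. The whole content is therefore the ``drift-down'' estimate under $Q_t\ge\theta_{t_0}(t)$, and the plan is to adapt the argument of~\cite{OCO-stochastic} for controlling the expected virtual queue size under Slater's condition so that it is compatible with (i) the time-varying parameters $V_i=(\tmix i)^\beta$, $\alpha_i=\tmix i$ and (ii) the fact that $g_i(\bm{\hat x})$ is no longer independent of $\mathcal{F}_{i-1}$.

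\emph{Per-step and $t_0$-step drift against the Slater point.} Starting from the one-step bound $\tfrac12(Q_{i+1}^2-Q_i^2)\le Q_i\big(g_i(\bm{x_i})+\grad g_i(\bm{x_i})^\top(\bm{x_{i+1}}-\bm{x_i})\big)+\tfrac12(H_i+G_iR)^2$ of Lemma~\ref{lemma:drift-bound}, I would use the optimality of the primal update of \Cref{alg0} through the three-point property of the Bregman divergence with comparison point $\bm{\hat x}$, together with convexity of $g_i$ (and $Q_i\ge0$) and $\|\bm{\hat x}-\bm{x_{i+1}}\|\le R$ (\Cref{ass:bounded}), to obtain $\tfrac12(Q_{i+1}^2-Q_i^2)\le Q_ig_i(\bm{\hat x})+V_iFR+\alpha_i\big(D(\bm{\hat x},\bm{x_i})-D(\bm{\hat x},\bm{x_{i+1}})\big)+\tfrac12(H+GR)^2$. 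Summing over $i=t,\dots,t+t_0-1$, the Bregman terms telescope by Abel summation and, since $\alpha_i=\tmix i$ is non-decreasing, contribute at most $\alpha_{t+t_0-1}R^2=\tmix(t+t_0-1)R^2$, which (using $\tmix\le\tau$) is of the order of the $\tau(t_0+t-1)(H+GR)^2$ piece of $\theta_{t_0}(t)$; also $\sum_iV_i\le\tmix^\beta t_0(t+t_0-1)^\beta$ and the quadratic terms sum to at most $t_0(H^2+G^2R^2)$. This leaves $\tfrac12\big(Q_{t+t_0}^2-Q_t^2\big)\le\sum_{i=t}^{t+t_0-1}Q_ig_i(\bm{\hat x})+B_t$, where $B_t\ge0$ contains only terms not involving $Q_t$.

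\emph{Ergodic cancellation of $\sum_iQ_ig_i(\bm{\hat x})$ --- the crux.} For the first $\tau-1$ indices $i=t,\dots,t+\tau-2$ I would use the crude bounds $Q_i\le(i-1)H$ and $g_i(\bm{\hat x})\le H$, which contribute a $Q_t$-free amount $\le(\tau-1)(t+\tau)H^2$. For $i\ge t+\tau-1$ I would split $Q_i=Q_{i-\tau+1}+(Q_i-Q_{i-\tau+1})$: the remainder term is bounded by $(\tau-1)(H+GR)H$ per index (Lemma~\ref{lem:trivial}), again $Q_t$-free; and in the main term $Q_{i-\tau+1}$ is $\mathcal{F}_{i-\tau}$-measurable while $\bm{\xi_i}$ lies $\tau$ steps ahead, so $\mathbb{E}_{i-\tau}[g_i(\bm{\hat x})]\le\bar g(\bm{\hat x})+2H\lVert\mathbb{P}_{[i-\tau]}^{i}-\mu\rVert_{TV}\le-\epsilon+2H/T\le-\epsilon/2$ by $\bar g(\bm{\hat x})\le-\epsilon$, the definition $\tau=\tmix(T^{-1})$, and $T\ge4H/\epsilon$; hence $\mathbb{E}_{t-1}[Q_{i-\tau+1}g_i(\bm{\hat x})]\le-\tfrac\epsilon2\mathbb{E}_{t-1}[Q_{i-\tau+1}]\le-\tfrac\epsilon2\big(Q_t-(i-\tau+1-t)(H+GR)\big)$, where the last step uses Lemma~\ref{lem:trivial} and that $Q_t$ is $\mathcal{F}_{t-1}$-measurable. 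Summing over these indices produces the decisive term $-\tfrac\epsilon2(t_0-\tau+1)Q_t$ plus a $Q_t$-free remainder $\le\tfrac\epsilon4(t_0-\tau)^2(H+GR)$. Collecting everything, $\tfrac12\mathbb{E}_{t-1}\big[Q_{t+t_0}^2-Q_t^2\big]\le-\tfrac\epsilon2(t_0-\tau+1)Q_t+\tilde B_t$, where each $Q_t$-free summand of $\tilde B_t$ is of the order of a corresponding summand inside the bracket of $\theta_{t_0}(t)$, so that the bracket exceeds $2\tilde B_t$.

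\emph{From $Q^2$ to $Q$, and the main obstacle.} Since $t_0>2\tau$ we have $t_0-\tau+1\ge t_0/2$, so $-\epsilon(t_0-\tau+1)Q_t+2\tilde B_t\le-\tfrac{\epsilon t_0}{2}Q_t-\tfrac{\epsilon(t_0-2\tau)}{2}Q_t+2\tilde B_t$; when $Q_t\ge\theta_{t_0}(t)$ --- and $\theta_{t_0}(t)$ is defined precisely so that $\tfrac{\epsilon(t_0-2\tau)}{2}\theta_{t_0}(t)\ge 2\tilde B_t$ --- the last two terms are nonpositive, so $\mathbb{E}_{t-1}[Q_{t+t_0}^2]\le Q_t^2-\tfrac{\epsilon t_0}{2}Q_t\le\big(Q_t-\tfrac{\epsilon t_0}{4}\big)^2$, and Jensen's inequality gives $\mathbb{E}_{t-1}[Q_{t+t_0}]\le\sqrt{\mathbb{E}_{t-1}[Q_{t+t_0}^2]}\le Q_t-\tfrac{\epsilon t_0}{4}$. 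The step I expect to be the main obstacle is the ergodic cancellation: in the i.i.d.\ setting one has $\mathbb{E}[Q_ig_i(\bm{\hat x})]=\mathbb{E}[Q_i]\bar g(\bm{\hat x})\le-\epsilon\mathbb{E}[Q_i]$ for free, whereas here $g_i(\bm{\hat x})$ is correlated with the $\mathcal{F}_{i-1}$-measurable $Q_i$; resolving this forces the $\tau$-step index shift (hence the separate crude treatment of the first $\tau-1$ indices and of $|Q_i-Q_{i-\tau+1}|\le(\tau-1)(H+GR)$) together with the over-the-window bound $|Q_{i-\tau+1}-Q_t|\le(i-\tau+1-t)(H+GR)$, all while keeping the coefficient of $Q_t$ at least $\tfrac\epsilon2(t_0-\tau+1)$ so that, after every leftover is absorbed into $\theta_{t_0}(t)$, the residual slack $\tfrac{\epsilon(t_0-2\tau)}{2}$ --- which is available only under the hypothesis $t_0>2\tau$ --- remains.
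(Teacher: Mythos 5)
Your proposal is correct and follows essentially the same route as the paper's proof: the per-step drift bound against the Slater point $\bm{\hat x}$ (the paper's Lemma~\ref{lem:drift2}), telescoping of the Bregman terms using monotonicity of $\alpha_i$, the $\tau$-step index shift with the decomposition $Q_i g_i(\bm{\hat x})$ into a crude initial block, a $|Q_i-Q_{i-\tau+1}|\le(\tau-1)(H+GR)$ remainder, a total-variation correction $g_{i+\tau-1}(\bm{\hat x})-\bar g(\bm{\hat x})\le 2H/T\le\epsilon/2$, and the decisive $-\epsilon\,\mathbb{E}_{t-1}[Q_{i}]$ term, followed by absorption of all $Q_t$-free leftovers into $\theta_{t_0}(t)$ and Jensen's inequality. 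The one step you should make explicit is that $\theta_{t_0}(t)\ge(t_0-\tau)(H+GR)\ge\epsilon t_0/2$ (using $t_0>2\tau$ and $H\ge\epsilon$), so that on the event $Q_t\ge\theta_{t_0}(t)$ the quantity $Q_t-\epsilon t_0/4$ is nonnegative and the final inequality $\sqrt{\mathbb{E}_{t-1}[Q_{t+t_0}^2]}\le Q_t-\epsilon t_0/4$ is legitimate; the paper verifies this at the outset.
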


\begin{proof}
    $|Q_{t+1}-Q_t|\leq H + GR$ follows directly from Lemma~\ref{lem:trivial}.

    For the second part, note that  by Jensen's inequality, we have $$\mathbb{E}_{t-1}[Q_{t+t_0}]^2\leq \mathbb{E}_{t-1}[Q_{t+t_0}^2].$$
Moreover, observe that
    \begin{align*}
        \theta_{t_0}(t) &\geq \frac{2\epsilon(t_0-\tau)^2(H+GR)}{\epsilon(t_0-2\tau)}\geq (t_0-\tau)(H+GR)\geq \frac{\epsilon t_0}{2},
    \end{align*}
    where the last inequality holds because $t_0>2\tau$ and $H>\epsilon$. This means that $Q_t>\epsilon t_0/4$ if $Q_t>\theta(t_0)$. Therefore, it is sufficient to show that if $Q_t\geq \theta(t_0)$,
    $$\mathbb{E}_{t-1}[Q_{t+t_0}^2]\leq \left(Q_t-\frac{\epsilon t_0}{4}\right)^2.$$
    Recall that By Lemma~\ref{lem:drift2} and the convexity of $g_t$, we have for any $i\geq 1$, $$\Delta_i\leq H^2+G^2 R^2+ Q_i g_i(\bm{\hat x})+V_i RF+\alpha_i\left(D(\bm{\hat x}, \bm{x_i})-D(\bm{\hat x}, \bm{x_{i+1}})\right).$$ 
    Thus, we have 
    \begin{align*}
        &\sum_{i=t}^{t+t_0-1} \mathbb{E}_{t-1}[\Delta_i]\\
        &\leq t_0(H^2+G^2 R^2)+\sum_{i=t}^{t+t_0-1}\mathbb{E}_{t-1}[Q_i g_i(\bm{\hat x})]+RF\tmix^\beta\sum_{i=t}^{t+t_0-1}i^\beta\\
        &\quad +\alpha_t D(\bm{\hat x}, \bmc)+\sum_{i=t}^{t+t_0-2} (\alpha_{i+1}-\alpha_i)D(\bm{\hat x},\bm{x_{i+1}}) - \alpha_{t+t_0-1} D(\bm{\hat x}, x_{t+t_0})\\
        &\leq t_0(H^2+G^2 R^2)+RF\tmix^\beta t_0(t+t_0-1)^{\beta}+\alpha_{t+t_0-1}R^2+\sum_{i=t}^{t+t_0-1}\mathbb{E}_{t-1}[Q_i g_i(\bm{\hat x})].
    \end{align*}
    We factor the last term as follows.
    \begin{align*}
        &\sum_{i=t}^{t+t_0-1} \mathbb{E}_{t-1}[Q_i g_i(\bm{\hat x})] \\
        &= \sum_{i=t}^{t+\tau-2}\mathbb{E}_{t-1}[Q_i g_i(\bm{\hat x})]+\sum_{i=t}^{t+t_0-\tau}\mathbb{E}_{t-1}[(Q_{i+\tau-1}-Q_i)g_{i+\tau-1}(\bm{\hat x})]\\
        &\quad 
        +\sum_{i=t}^{t+t_0-\tau}\mathbb{E}_{t-1}[Q_i(g_{i+\tau-1}(\bm{\hat x})-\bar{g}(\bm{\hat x}))]+\sum_{i=t}^{t+t_0-\tau}\mathbb{E}_{t-1}[Q_i \bar{g}(\bm{\hat x})]\\
        &\leq H^2\sum_{i=t-1}^{t+\tau-3}i+(t_0-\tau+1)(\tau-1) H(H+GR)+\left(\frac{2H}{T}-\epsilon\right)\sum_{i=t}^{t+t_0-\tau}\mathbb{E}_{t-1}[Q_i]
\end{align*}
where the inequality is due to
\begin{align*}
    \mathbb{E}_{t-1}[\bar{g}(\bmc)-g_{t+\tau-1}(\bmc)]&\leq H\int|d\mu(\xi)-d\mathbb{P}_{[t-1]}^{t+\tau-1}(\xi)|\leq 2H\lVert\mu-\mathbb{P}_{[t-1]}^{t+\tau-1}\rVert_{TV}\leq\frac{2H}{T}.
\end{align*}
Then, since $T\geq 4H/\epsilon$ and thus $2H/T\leq \epsilon/2$, it follows that
\begin{align*}
    &\sum_{i=t}^{t+t_0-1} \mathbb{E}_{t-1}[Q_i g_i(\bm{\hat x})]     \\&\leq (t_0+t-2)(\tau-1) H(H+GR)-\frac{\epsilon}{2}\sum_{i=t}^{t+t_0-\tau}\mathbb{E}_{t-1}[Q_i]\\
        &\leq (t_0+t-2)(\tau-1) H(H+GR)-\frac{\epsilon}{2}\sum_{i=0}^{t_0-\tau}\left(Q_t-i(H+GR)\right)\\
        &\leq (t_0+t-2)(\tau-1) H(H+GR)-\frac{\epsilon}{2}\left((t_0-\tau)Q_t-(t_0-\tau)^2(H+GR)\right).
    \end{align*}
As a result, we obtain
    \begin{align*}
        \mathbb{E}_{t-1}[Q_{t+t_0}^2]&=Q_t^2+2\sum_{i=t}^{t+t_0-1}\mathbb{E}_{t-1}[\Delta_i]\\
        &\leq Q_t^2-\epsilon(t_0-\tau)Q_t+\epsilon(t_0-\tau)^2(H+GR)+2(t_0+t-2)(\tau-1) H(H+GR)\\
        &\quad +2t_0(H^2+G^2 R^2)+2RF\tmix^\beta t_0(t+t_0-1)^{\beta}+2\tmix(t+t_0-1)R^2\\
        &\leq Q_t^2-\epsilon(t_0-\tau)Q_t+\epsilon(t_0-\tau)^2(H+GR)+4\tau(t_0+t-1)(H+GR)^2\\
        &\quad +2t_0(H^2+G^2 R^2)+2RF\tmix^\beta t_0(t+t_0-1)^{\beta}
    \end{align*}
    If $Q_t$ satisfies
    \begin{align*}
        \frac{\epsilon}{2}(t_0-2\tau)Q_t&\geq \epsilon(t_0-\tau)^2(H+GR)+4\tau(t_0+t-1)(H+GR)^2\\
        &\quad +2t_0(H^2+G^2 R^2)+2RF\tmix^\beta t_0(t+t_0-1)^{\beta},
    \end{align*}
    which is equivalent to $Q_t\geq \theta_{t_0}(t)$, then 
    $$\mathbb{E}_{t-1}[Q_{t+t_0}^2]\leq Q_t^2-\frac{\epsilon}{2}t_0 Q_t\leq \left(Q_t-\frac{\epsilon t_0}{4}\right)^2$$
    as required.
\end{proof}

Having prepared Lemmas~\ref{lem:process} and~\ref{slater-drift-ergodic}, we can derive refined bounds on the expected virtual queue size. Based on this, we are ready to prove \Cref{thm3}.
\begin{proof}[Proof of Theorem \ref{thm3}]
As the result is trivial when $T\leq 4H/\epsilon$ where $H$ and $\epsilon$ are constants, we may assume without loss of generality that $T\geq 4H/\epsilon$. Moreover, we will use Lemma~\ref{lem:process} for many values of $t_0$ greater than $2\tau$. Setting $\delta_{\max} = H+GR$, $\zeta = \epsilon/4$, and $\theta= \theta_{t_0}$ where $\theta_{t_0}$ is defined as in Lemma~\ref{slater-drift-ergodic}, Lemma~\ref{lem:process} gives us that
$$\mathbb{E}[Q_{t+1}]\leq \theta_{t_0}(t)+t_0(H+GR)+t_0\frac{16(H+GR)^2}{\epsilon}\log \left(\frac{128(H+GR)^2}{\epsilon^2}\right).$$
In particular, as this inequality holds for any $t_0> 2\tau$, the inequality with $t_0 = 2\tau + \lceil\sqrt{\tmix t}\rceil$ holds. Recall that in Lemma~\ref{slater-drift-ergodic}, we set
\begin{align*}
        \theta_{t_0}(t)&= \frac{2}{\epsilon(t_0-2\tau)}\left(\epsilon(t_0-\tau)^2(H+GR)+4\tau(t_0+t-1)(H+GR)^2\right)\\
        &\quad +\frac{2}{\epsilon(t_0-2\tau)}\left(2t_0(H^2+G^2 R^2)+2RF\tmix^\beta t_0(t+t_0-1)^{\beta}\right).
    \end{align*}
Moreover, $$\frac{\epsilon(t_0-2\tau)}{2} \theta_{t_0}(t)=\frac{\epsilon \lceil\sqrt{\tmix t}\rceil}{2}\theta_{t_0}(t)\geq   \frac{\epsilon \sqrt{\tmix t}}{2}\theta_{t_0}(t).$$
This implies the following.
\begin{align*}
    &\frac{\epsilon \sqrt{\tmix t}}{2}\theta_{t_0}(t) \\
    &\leq \epsilon(\tau+\lceil\sqrt{\tmix t}\rceil)^2(H+GR)+4\tau(2\tau+\lceil\sqrt{\tmix t}\rceil+t-1)(H+GR)^2\\
        &\quad +2(2\tau+\lceil\sqrt{\tmix t}\rceil)(H^2+G^2 R^2)+2RF\tmix^\beta (2\tau+\lceil\sqrt{\tmix t}\rceil)(t+2\tau+\lceil\sqrt{\tmix t}\rceil-1)^{\beta}\\
    &\leq 2\epsilon(H+GR)(\tau^2+4\tmix t)+4\tau(2\tau+2\sqrt{\tmix t}+t)(H+GR)^2\\
    &\quad +4(\tau+\sqrt{\tmix t})(H^2+G^2 R^2)+4RF\tmix^\beta(\tau+\sqrt{\tmix t})(t+2\tau+2\sqrt{\tmix t})^\beta\\
    &\leq \underbrace{2\epsilon(H+GR)(\tau^2+4\tmix t)}_{(a)}+\underbrace{4\tau(2\tau+\tmix+2t)(H+GR)^2}_{(b)}\\
    &\quad +\underbrace{4(\tau+\sqrt{\tmix t})(H^2+G^2 R^2)}_{(c)}+\underbrace{4RF\tmix^\beta(\tau+\sqrt{\tmix t})(2t+2\tau+\tmix)^\beta}_{(d)}
\end{align*}
where the second inequality follows from $(a+b)^2\leq 2(a^2+b^2), \lceil x \rceil\leq 2x$ which holds for $x\geq 1$ and the third inequality follows from AM-GM inequality.
We see that term $(c)$ grows more slowly than term $(d)$ which is of order $$O\left(\tmix^\beta (\tau+t)^\beta(\tau+\sqrt{\tmix t})\right)=O\left(\tmix^\beta (\tau^\beta+t^\beta)(\tau+\sqrt{\tmix t})\right).$$
Here, we used the known fact that $(a+b)^\beta \leq a^\beta + b^\beta$ for $a,b\geq0$ and $\beta<1$.
Likewise, term $(a)$ grows more slowly than term $(b)$ which is of order $$O\left(\tau(\tau+t)\right).$$
Then it follows that
\begin{align*}
    \theta_{t_0}(t)&= O\left(\tmix^\beta(\tau^\beta+t^\beta)\left(1+\frac{\tau}{\sqrt{\tmix t}}\right)+\frac{\tau(\tau+t)}{\sqrt{\tmix t}}\right)\\
    &=O\left(\underbrace{\tmix^\beta \tau^\beta}_{(a')}+\underbrace{\tmix^\beta t^\beta}_{(b')} +\underbrace{\frac{\tau^{1+\beta}\tmix^{\beta}}{\sqrt{\tmix t}}}_{(c')}+\underbrace{\frac{\tau \tmix^\beta t^\beta}{\sqrt{\tmix t}}}_{(d')}+\underbrace{\frac{\tau^2}{\sqrt{\tmix t}}}_{(e')}+\underbrace{\frac{\tau t}{\sqrt{\tmix t}}}_{(f')}\right).
\end{align*}
Here, term $(c')$ grows more slowly than term $(e')$, and term $(f')$ dominates term $(b')$. Also, terms $(a')$ and $(d')$ are dominated by $$\tau=O\left(\tau\sqrt{\frac{\tau}{\tmix}}\right)=O\left( (e')+(f')\right),$$ which is due to the AM-GM inequality. Therefore,
$$\theta_{t_0}(t)=O\left(\frac{\tau(\tau+t)}{\sqrt{\tmix t}}\right).$$
Furthermore, since $$t_0=2\tau+\lceil \sqrt{\tmix t}\rceil= O\left(\frac{\tau(\tau+t)}{\sqrt{\tmix t}}\right)$$ which holds because $\tau(\tau +t) \geq 2 \tau \sqrt{\tau t}$, we have
\begin{align}
    \mathbb{E}[Q_t]&\leq O\left(\frac{\tau(\tau+t)}{\sqrt{\tmix t}}\right),\label{ineq:ergodic-q}
\end{align}
which essentially gives rise to the desired refined bound on the expected virtual queue size. 

Having provided the bound on the expected queue size, we now proceed to provide bounds on the performance guarantees of \Cref{alg0}.
First of all, note that the regret bound given by \Cref{thm1} still holds with $\beta =1/2$, as we use the same algorithm parameters. Then we deduce that
\begin{align*}
    \mathbb{E}\left[\operatorname{Regret}(T)\right]=O\left(\frac{\tau \sqrt{T}}{\sqrt{\tmix}}\right).
\end{align*}
For constraint violation, we deduce from Lemma \ref{lem:constraint} and~\eqref{ineq:ergodic-q} that
\begin{align}
\begin{aligned}
\mathbb{E}\left[\operatorname{Violation}(T)\right]&\leq \mathbb{E}[Q_{T+1}]+\frac{FG}{2}\sum_{t=1}^T\frac{V_t}{\alpha_t}+\frac{G^2}{2}\sum_{t=1}^T\frac{\mathbb{E}[Q_t]}{\alpha_t}\notag\\
    &=O\left(\frac{\tau(\tau+T+1)}{\sqrt{\tmix (T+1)}}+{\tmix^{\beta-1}}\sum_{t=1}^T t^{\beta-1}+\frac{1}{\tmix}\sum_{t=1}^T \frac{\tau(\tau+t)}{t\sqrt{\tmix t}}\right)\\
    &=O\left(\frac{\tau(\tau+T)}{\sqrt{\tmix T}}\right).
    \end{aligned}
\end{align}
Next, we consider the optimality gap. In the proof of \Cref{thm2}, we argued that
\begin{align*}
    \mathbb{E}[f_{t+\tau-1}(\bmc)-f_{t+\tau-1}(\bm{x_{t+\tau-1}})]
    &\leq \frac{F^2\tmix^{\beta-1}}{2\beta}\left((t+\tau-2)^\beta-(t-1)^\beta\right)+\frac{FG}{2\tmix}\sum_{s=t}^{t+\tau-2}\frac{\mathbb{E}[Q_s]}{s}.
\end{align*}
Here, using~\eqref{ineq:ergodic-q} again, the second term on the right-hand side can be bounded as follows.
\begin{align*}
&\frac{1}{\tmix}\sum_{s=t}^{t+\tau-2}\frac{\mathbb{E}[Q_s]}{s}\\
&=O\left(\frac{\tau^2}{\tmix^{3/2}}\left(\underbrace{\lambda((t-1)^{-1/2})-\lambda((t+\tau-2)^{-1/2})}_{(a'')}\right)+\frac{\tau}{\tmix^{3/2}}(\sqrt{t+\tau-2}-\sqrt{t-1})\right)
\end{align*}
where $$\lambda(x)=\begin{cases}
    x, &\text{ if }x>0\\
    3/2, &\text{ if }x=0
\end{cases}.$$ 
Therefore, it follows that
\begin{align}\label{tau2}
\begin{aligned}
    \sum_{t=1}^{T-\tau+1}\mathbb{E}[f_{t+\tau-1}(\bmc)-f_{t+\tau-1}(\bm{x_{t+\tau-1}})]&=O\left(\frac{\tmix^{\beta-1}}{2\beta}\left((T-1)^\beta+\cdots+(T-\tau+1)^\beta\right)\right.\\
    &\quad +\frac{\tau^2}{\tmix^{3/2}}\left(\frac{3}{2}+1^{-1/2}+\cdots+(\tau-2)^{-1/2}\right)\\
    &\quad\left.+\frac{\tau}{\tmix^{3/2}}\left(\sqrt{T-1}+\cdots+\sqrt{T-\tau+1}\right)\right)
    \end{aligned}
\end{align}
if $\tau\geq 2$.
Thus, we deduce
\begin{align*}
&\sum_{t=1}^{T-\tau+1}\mathbb{E}[f_{t+\tau-1}(\bmc)-f_{t+\tau-1}(\bm{x_{t+\tau-1}})]\\
&=O\left(\tmix^{\beta-1}(\tau-1)T^\beta+\tau^2\sqrt{\tau-1}\tmix^{-3/2}+\tau(\tau-1)\tmix^{-3/2}\sqrt{T}\right)\\
&=O\left(\frac{\tau^{5/2}}{\tmix^{3/2}}+\frac{\tau^{2}\sqrt{T}}{\tmix^{3/2}}\right).
\end{align*}
We combine this result with the other parts of the proof of the first part of \Cref{thm2} to get
\begin{align*}
    \mathbb{E}\left[\sum_{t=1}^T \left(\bar{f}(\bmc)-\bar{f}(\bmx)\right)\right]= O\left(\frac{\tau^{5/2}}{\tmix^{3/2}}+\frac{\tau^{2}\sqrt{T}}{\tmix^{3/2}}+\tmix^{-\beta}\tau T^{1-\beta}+\tau\right)=O\left(\frac{\tau^{5/2}}{\tmix^{3/2}}+\frac{\tau^{2}\sqrt{T}}{\tmix^{3/2}}\right)
\end{align*}
for any $\bmx\in\mathcal{X}$ such that $\bar{g}(\bmx)\leq 0$ since $\beta = 1/2$. Similarly, we get
\begin{align*}
\sum_{t=1}^{T-\tau+1}\mathbb{E}[g_{t+\tau-1}(\bmc)-g_{t+\tau-1}(\bm{x_{t+\tau-1}})]=O\left(\frac{\tau^{5/2}}{\tmix^{3/2}}+\frac{\tau^{2}\sqrt{T}}{\tmix^{3/2}}\right).
\end{align*}
As before, we combine this result with the other parts of the proof of the second part of \Cref{thm2}. Then we get
\begin{align*}
    \mathbb{E}\left[\sum_{t=1}^T \bar{g}(\bmc)\right]= O\left(\frac{\tau^{5/2}}{\tmix^{3/2}}+\frac{\tau^{2}\sqrt{T}}{\tmix^{3/2}}+\frac{\tau(\tau+T)}{\sqrt{\tmix T}}+\tau\right)=O\left(\frac{\tau^{5/2}}{\tmix^{3/2}}+\frac{\tau^{2}\sqrt{T}}{\tmix^{3/2}}+\frac{\tau^2}{\sqrt{\tmix T}}\right).
\end{align*}
Finally, we obtain that
\begin{align*}
    \mathbb{E}[\operatorname{Gap}(\bm{\bar x_T})]&=O\left(\frac{\tau^{5/2}}{\tmix^{3/2}T}+\frac{\tau^{2}}{\tmix^{3/2}\sqrt{T}}\right),\\
    \mathbb{E}[\operatorname{Infeasibility}(\bm{\bar x_T})]&=\left(\frac{\tau^{5/2}}{\tmix^{3/2}T}+\frac{\tau^{2}}{\tmix^{3/2}\sqrt{T}} + \frac{\tau^2}{\sqrt{\tmix} T^{3/2}}\right),
\end{align*}
as required.
\end{proof}

\section{Analysis of MLMC Adaptive Drift-Plus-Penalty for the Unknown Mixing Time Case}\label{sec:mdpp-proof}

This section presents a complete proof of \Cref{thm5}. \Cref{sec:mlmc-queue} provides bounds on several terms that involve the virtual queue size. In \Cref{sec:thm5proof}, we state the proof of Lemma~\ref{lem:st1}, and lastly, we prove \Cref{thm5}.

\subsection{Controlling the Expected Virtual Queue Size}\label{sec:mlmc-queue}

In this section we provide upper bounds on terms $\mathbb{E}\left[Q_t/V_t\right]$ and $\mathbb{E}\left[Q_t\right]$ under the MLMC Adaptive Drift-Plus-Penalty algorithm (\Cref{alg1}). Lemma~\ref{lem:newbound} gives a refined bound on the term $\mathbb{E}\left[g_t(\bmx)\right]$. Using this, Lemma~\ref{lem:qv} derives an adaptive upper bound on the term $\mathbb{E}\left[Q_t/V_t\right]$, and Lemma~\ref{lem:q-bound-mdpp} deduces an adaptive upper bound on the term $\mathbb{E}\left[Q_t\right]$.

Note that if $\bar x\in\mathcal{X}$ satisfies $\bar g(\bmx)\leq 0$, then $g_t(\bmx)\leq g_t(\bmx)- \bar g(\bmx)$. Then we deduce
\begin{align*}
    \mathbb{E}_{t-1}[g_t(\bmx)]\leq \mathbb{E}_{t-1}[g_t(\bmx)-\bar{g}(\bmx)]
    =\mathbb{E}_{t-1}\left[g_t^{2^{j_{\max}}}(\bmx)-\bar{g}(\bmx)\right]
    \leq \mathbb{E}_{t-1}\left[|g_t^{2^{j_{\max}}}(\bmx)-\bar{g}(\bmx)|\right].
\end{align*}
where the equality is due to Lemma~\ref{lem:MLMC}. Applying Lemma~\ref{lem:concentration} together with Jensen's inequality on the right-hand side, we obtain
\begin{equation}\label{ineq:gt}
    \mathbb{E}_{t-1}[g_t(\bmx)]\leq C(T)\tmix^{1/2}T^{-1}=\tilde O\left(\tmix^{1/2}T^{-1}\right).
\end{equation}
In fact, we may derive a tighter bound on the term $\mathbb{E}_{t-1}[g_t(\bmx)]$ as follows.
\begin{lemma}\label{lem:newbound}
    For any $\bmx\in\mathcal{X}$ such that $\bar{g}(\bmx)\leq 0$,
    $$\mathbb{E}_{t-1}[g_t(\bmx)]\leq D(T)\tmix^{1/4}T^{-1/2},$$
    where $D(T)=\sqrt{C(T)H(4\log_2 T+3)}$
\end{lemma}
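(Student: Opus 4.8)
The plan is to prove the sharper bound by interpolating between the uniform ($L^\infty$) control and the $L^2$ concentration control on the full-level MLMC average, rather than relying only on Cauchy--Schwarz, which is what produced the coarser estimate~\eqref{ineq:gt}. First I would repeat the opening reduction of~\eqref{ineq:gt}: since $\bar g(\bmx)\le 0$ and $\bmx$ is deterministic, hence $\mathcal{F}_{t-1}$-measurable, Lemma~\ref{lem:MLMC} gives
\[
\mathbb{E}_{t-1}[g_t(\bmx)]\;\le\;\mathbb{E}_{t-1}\!\left[g_t(\bmx)-\bar g(\bmx)\right]\;=\;\mathbb{E}_{t-1}\!\left[g_t^{2^{j_{\max}}}(\bmx)-\bar g(\bmx)\right]\;\le\;\mathbb{E}_{t-1}\!\left[|Y|\right],
\]
where $Y:=g_t^{2^{j_{\max}}}(\bmx)-\bar g(\bmx)$, so that it suffices to bound $\mathbb{E}_{t-1}[|Y|]$.

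Next I would record two facts about $Y$. First, $g_t^{2^{j_{\max}}}(\bmx)$ is an average of $2^{j_{\max}}$ values of $g(\bmx,\cdot)$, each lying in $[-H,H]$ by Assumption~\ref{ass:lip}, and $|\bar g(\bmx)|\le H$ as well, so $|Y|\le 2H$ almost surely; second, Lemma~\ref{lem:concentration} gives $\mathbb{E}_{t-1}[Y^2]\le C(T)^2\tmix/T^2$. Then I would interpolate: writing $|Y|=|Y|^{1/2}\cdot|Y|^{1/2}$, bounding one factor by $(2H)^{1/2}$ and applying Jensen's inequality to the concave map $u\mapsto u^{1/4}$ to the other,
\[
\mathbb{E}_{t-1}[|Y|]\;\le\;(2H)^{1/2}\,\mathbb{E}_{t-1}\!\left[(Y^2)^{1/4}\right]\;\le\;(2H)^{1/2}\bigl(\mathbb{E}_{t-1}[Y^2]\bigr)^{1/4}\;\le\;\sqrt{2H\,C(T)}\;\tmix^{1/4}T^{-1/2},
\]
and since $2\le 4\log_2 T+3$ for all $T\ge 1$, the right-hand side is at most $\sqrt{C(T)H(4\log_2 T+3)}\,\tmix^{1/4}T^{-1/2}=D(T)\,\tmix^{1/4}T^{-1/2}$, which is the claimed bound.

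The only genuinely delicate point — and the reason this does not simply follow from~\eqref{ineq:gt} — is that plain Cauchy--Schwarz discards the uniform bound $H$ and hence yields a constant that does not scale with $H$, whereas the interpolation above trades a half-power of $T$ for a factor $\sqrt H$, which is precisely what produces the constant $D(T)$. I would also take care to run the interpolation on the full-level average $g_t^{2^{j_{\max}}}(\bmx)$ — for which both the mean identity of Lemma~\ref{lem:MLMC} and the concentration estimate of Lemma~\ref{lem:concentration} are available — rather than on the raw MLMC estimator $g_t(\bmx)$, whose magnitude can be of order $HT^2$. If one preferred not to invoke the identity, expanding the MLMC telescoping sum and applying the same interpolation at each of the at most $2j_{\max}+3$ levels, weighted by the geometric probabilities of $N_t$, recovers the constant $D(T)$ directly; the route above is just the shortest.
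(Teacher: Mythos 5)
Your proof is correct, and it even yields a slightly sharper constant than the one in the statement. The paper proves the lemma by a different (though related) interpolation: it first establishes the two scalar bounds $\mathbb{E}_{t-1}[g_t(\bmx)]\leq (4\log_2 T+3)H$ — obtained from the \emph{raw} MLMC estimator via $|g_t(\bmx)|\leq (2N_t+1)H$ and $\mathbb{E}[N_t]\leq 2\log_2 T+1$ — and $\mathbb{E}_{t-1}[g_t(\bmx)]\leq C(T)\tmix^{1/2}T^{-1}$ (this is~\eqref{ineq:gt}), and then simply takes the geometric mean of the two, using $\min(b,c)\leq\sqrt{bc}$ for nonnegative $b,c$; this is exactly how the constant $D(T)=\sqrt{C(T)H(4\log_2 T+3)}$ arises. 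You instead interpolate pointwise on the centered full-level average $Y=g_t^{2^{j_{\max}}}(\bmx)-\bar g(\bmx)$, combining the uniform bound $|Y|\leq 2H$ with the $L^2$ bound of Lemma~\ref{lem:concentration} via $\mathbb{E}_{t-1}[|Y|]\leq (2H)^{1/2}\bigl(\mathbb{E}_{t-1}[Y^2]\bigr)^{1/4}$. The upshot is that you never need the expected magnitude of the raw estimator or the bound on $\mathbb{E}[N_t]$, and your constant $\sqrt{2HC(T)}$ has no $\log_2 T$ factor; the paper's route is marginally shorter to state given that~\eqref{ineq:gt} is already on the page, but is dominated by yours. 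Both arguments rest on the same two lemmas (\ref{lem:MLMC} for the mean identity and \ref{lem:concentration} for the variance), and your final step $2\leq 4\log_2 T+3$ correctly recovers the stated $D(T)$.
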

\begin{proof} Note that  $g_t(\bmx)\leq |g_t(\bmx)|\leq (2N_t+1)H$ by the definition of $g_t$. This implies that
 \begin{equation*}
     \mathbb{E}_{t-1}[g_t(\bmx)]\leq\mathbb{E}_{t-1}[(2N_t+1)H]=(4\log_2 T+3)H,
 \end{equation*}
 where the equality follows from Lemma \ref{lem:MLMC} and the fact that $N_t$ is independent of $\mathcal{F}_{t-1}$. Moreover, as we argued above, we have
 \begin{align*}
 \begin{aligned}
     \mathbb{E}_{t-1}[g_t(\bmx)]&\leq \mathbb{E}_{t-1}\left[|g_t^{2^{j_{\max}}}(\bmx)-\bar{g}(\bmx)|\right]\leq \sqrt{\mathbb{E}_{t-1}\left[|g_t^{2^{j_{\max}}}(\bmx)-\bar{g}(\bmx)|^2\right]}\leq C(T)\tmix^{1/2}T^{-1},
 \end{aligned}
 \end{align*}
 where the last inequality follows from Jensen's inequality and the equality follows from Lemma \ref{lem:concentration}.
By taking the geometric mean of these two upper bounds, we get that
$$\mathbb{E}_{t-1}[g_t(\bmx)]\leq \sqrt{C(T)H(4\log_2 T+3)}\tmix^{1/4}T^{-1/2},$$
as required.
\end{proof}

Next, Lemma \ref{lem:q} implies that $Q_{s+1}$ is at most
\begin{align*}
 \underbrace{\sqrt{2\sum_{t=1}^s RV_t F_t}}_{(a)}+\underbrace{\sqrt{2\sum_{t=1}^s (R^2G_t^2+H_t^2)}}_{(b)}+\underbrace{\sqrt{2R^2\alpha_s}}_{(c)}+\sqrt{2\sum_{t=1}^s Q_t(g_t(\bmc)+\grad g_t(\bmc)^\top(\bmx-\bmc))}.
\end{align*}
Here, term $(c)$ is equal to $\sqrt{2S_{s-1}}$ which is at most $\sqrt{2S_{s}}$, and term $(b)$ is at most $\sqrt{2S_{s}}$. For term $(a)$, we have
$$(a)^2= 2\sum_{t=1}^s S_{t-1}^\beta F_t\leq2 S_{s}^\beta\sum_{t=1}^s F_t\leq 2 S_{s}^\beta\sqrt{s\sum_{t=1}^s F_t^2}\leq 2\sqrt{s}S_{s}^{\beta+1/2},$$
where the second inequality holds by the power mean inequality.
Thus,
\begin{equation}\label{ineq:q-square}
    Q_{s+1}^2\leq 2\sqrt{s}S_s^{\beta+1/2}+4S_s+2\sum_{t=1}^s Q_t(g_t(\bmx_t)+\grad g_t(\bmx_t)^\top(\bmx-\bmx_t)),
\end{equation}
which in turn implies that
\begin{align}\label{ineq:q2}
\begin{aligned}
    Q_{s+1}&\leq2\sqrt{2}S_{s}^{1/2}+\sqrt{2}s^{1/4}S_{s}^{\beta/2+1/4}+\sqrt{2\sum_{t=1}^s Q_t\left(g_t(\bmc)+\grad g_t(\bmc)^\top(\bmx-\bmc)\right)}
    \end{aligned}
\end{align}
Moreover, \eqref{ineq:q2} implies that
$$\frac{Q_{s+1}}{V_{s+1}}\leq2\sqrt{2}RS_{s}^{1/2-\beta}+\sqrt{2}Rs^{1/4}S_{s}^{1/4-\beta/2}+\sqrt{\frac{2}{V_{s+1}}\sum_{t=1}^s \frac{Q_t}{V_t}\left(g_t(\bmc)+\grad g_t(\bmc)^\top(\bmx-\bmc)\right)}$$
as $V_t$ is non-decreasing. Taking the expectation, applying Jensen's inequality, and using the fact that $V_{s+1}\geq s\delta$, it follows that
\begin{align*}
    \mathbb{E}\left[\frac{Q_{s+1}}{V_{s+1}}\right]&\leq2\sqrt{2}R\mathbb{E}[S_{s}]^{1/2-\beta}+\sqrt{2}Rs^{1/4}\mathbb{E}[S_{s}]^{1/4-\beta/2}\\
    &\quad +\sqrt{\frac{2R}{(s\delta)^\beta}\sum_{t=1}^s \mathbb{E}\left[\frac{Q_t}{V_t}\mathbb{E}_{t-1}[g_t(\bmc)+\grad g_t(\bmc)^\top(\bmx-\bmc)]\right]}.
\end{align*}
Here, the last term on the right-hand side can be further upper bounded as follows.
\begin{align*}
    \sqrt{\frac{2R}{(s\delta)^\beta}\sum_{t=1}^s \mathbb{E}\left[\frac{Q_t}{V_t}\mathbb{E}_{t-1}[g_t(\bmc)+\grad g_t(\bmc)^\top(\bmx-\bmc)]\right]}\leq \sqrt{\frac{2R}{(s\delta)^\beta}\sum_{t=1}^s \mathbb{E}\left[\frac{Q_t}{V_t}\mathbb{E}_{t-1}[ g_t(\bmx)]\right]}
\end{align*}
where we used the fact that
\begin{align}\label{eq:convexity}
\begin{aligned}
\mathbb{E}_{t-1}[g_t(\bmc)+\grad g_t(\bmc)^\top(\bmx-\bmc)]&=\mathbb{E}_{t-1}\left[g^{j_{\max}}(\bmc)+\grad g^{j_{\max}}(\bmc)^\top(\bmx-\bmc)\right]\\
&\leq\mathbb{E}_{t-1}\left[g^{j_{\max}}(\bmx)\right]\\
&=\mathbb{E}_{t-1}\left[g_t(\bmx)\right]
\end{aligned}
\end{align}
holds due to Lemma~\ref{lem:MLMC}. Furthermore,
\begin{align*}
    \sqrt{\frac{2R}{(s\delta)^\beta}\sum_{t=1}^s \mathbb{E}\left[\frac{Q_t}{V_t}\mathbb{E}_{t-1}[ g_t(\bmx)]\right]} &\leq\sqrt{\frac{2R}{(s\delta)^\beta}D(T)\tmix^{1/4}T^{-1/2}\sum_{t=1}^s \mathbb{E}\left[\frac{Q_t}{V_t}\right]}\\
    &\leq \frac{2Rs}{(s\delta)^\beta}D(T)\tmix^{1/4}T^{-1/2}+\frac{1}{2s}\sum_{t=1}^s \mathbb{E}\left[\frac{Q_t}{V_t}\right]\\
    &\leq\frac{2RD(T)}{\delta^\beta}\tmix^{1/4}T^{1/2-\beta}+\frac{1}{2s}\sum_{t=1}^s \mathbb{E}\left[\frac{Q_t}{V_t}\right].
\end{align*}
where the second inequality follows from the AM-GM inequality and the last equality follows from $s\leq T$. Thus, we have
\begin{align}\label{ineq:qv}
\begin{aligned}
&\mathbb{E}\left[\frac{Q_{s+1}}{V_{s+1}}\right]\\
&\leq2\sqrt{2}R\mathbb{E}[S_{s}]^{1/2-\beta}+\sqrt{2}Rs^{1/4}\mathbb{E}[S_{s}]^{1/4-\beta/2}+\frac{2RD(T)}{\delta^\beta}\tmix^{1/4}T^{1/2-\beta}+\frac{1}{2s}\sum_{t=1}^s \mathbb{E}\left[\frac{Q_t}{V_t}\right]\\
&\leq 2\sqrt{2}R\mathbb{E}[S_{T}]^{1/2-\beta}+\sqrt{2}Rs^{1/4}\mathbb{E}[S_{T}]^{1/4-\beta/2}+\frac{2RD(T)}{\delta^\beta}\tmix^{1/4}T^{1/2-\beta}+\frac{1}{2s}\sum_{t=1}^s \mathbb{E}\left[\frac{Q_t}{V_t}\right].
\end{aligned}
\end{align}
Now we can bound $\mathbb{E}\left[{Q_t}/{V_t}\right]$ as follow.
\begin{lemma}\label{lem:qv}
    For $t\in [T+1]$, 
    $$\mathbb{E}\left[\frac{Q_t}{V_t}\right]\leq 4\sqrt{2}R\mathbb{E}[S_{T}]^{1/2-\beta}+2\sqrt{2}RT^{1/4}\mathbb{E}[S_{T}]^{1/4-\beta/2}+\frac{4RD(T)}{\delta^\beta}\tmix^{1/4}T^{1/2-\beta}.$$
\end{lemma}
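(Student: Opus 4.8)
The plan is a straightforward strong induction on $t$, using the recursive estimate~\eqref{ineq:qv} as the engine. Write the target bound as $M := 4\sqrt{2}R\,\mathbb{E}[S_T]^{1/2-\beta} + 2\sqrt{2}R\,T^{1/4}\mathbb{E}[S_T]^{1/4-\beta/2} + \frac{4RD(T)}{\delta^\beta}\tmix^{1/4}T^{1/2-\beta}$, and abbreviate the three additive pieces appearing in the last line of~\eqref{ineq:qv} as $A := 2\sqrt{2}R\,\mathbb{E}[S_T]^{1/2-\beta}$, $B := \sqrt{2}R\,\mathbb{E}[S_T]^{1/4-\beta/2}$, and $C := \frac{2RD(T)}{\delta^\beta}\tmix^{1/4}T^{1/2-\beta}$, so that $M = 2A + 2BT^{1/4} + 2C$ and, crucially, $M/2 = A + BT^{1/4} + C$. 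I claim $\mathbb{E}[Q_t/V_t] \le M$ for every $t \in [T+1]$.

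For the base case $t=1$, since $Q_1 = 0$ we have $\mathbb{E}[Q_1/V_1] = 0 \le M$. For the inductive step, fix $s \in [T]$ and assume $\mathbb{E}[Q_t/V_t] \le M$ for all $t \le s$. Applying the last line of~\eqref{ineq:qv} and then the induction hypothesis gives
\begin{align*}
\mathbb{E}\left[\frac{Q_{s+1}}{V_{s+1}}\right] &\le A + B s^{1/4} + C + \frac{1}{2s}\sum_{t=1}^s \mathbb{E}\left[\frac{Q_t}{V_t}\right] \le A + B s^{1/4} + C + \frac{1}{2s}\cdot sM \\
&= A + B s^{1/4} + C + \frac{M}{2} \le A + B T^{1/4} + C + \frac{M}{2} = \frac{M}{2} + \frac{M}{2} = M,
\end{align*}
where the penultimate inequality uses $s \le T$ and the last equality is the identity $M/2 = A + BT^{1/4} + C$ noted above. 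This closes the induction and proves the lemma.

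The only thing to be careful about is that the averaging coefficient in the recursion~\eqref{ineq:qv} is exactly $\tfrac{1}{2}$ (not larger), so that the self-referential term $\tfrac{1}{2s}\sum_{t\le s}\mathbb{E}[Q_t/V_t]$ contributes only $M/2$ under the induction hypothesis, leaving room for the remaining additive terms $A + Bs^{1/4} + C \le M/2$ to fit; this is precisely why the claimed constant $M$ is (twice) the sum of the three non-recursive terms. There is essentially no analytic obstacle here — all the real work (the concentration bound of Lemma~\ref{lem:concentration} feeding Lemma~\ref{lem:newbound}, the queue estimate of Lemma~\ref{lem:q}, and the AM--GM manipulation that produced~\eqref{ineq:qv}) has already been done; the remaining step is just verifying that the recursion's fixed point is the stated expression, which the computation above confirms. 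If one prefers, the same conclusion can be phrased via the elementary fact that a nonnegative sequence $a_s$ with $a_{s+1} \le c_s + \tfrac{1}{2s}\sum_{t\le s} a_t$ and $a_1 = 0$ satisfies $a_t \le 2\sup_s c_s$, applied with $c_s = A + Bs^{1/4} + C$.
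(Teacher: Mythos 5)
Your proof is correct and is essentially the paper's own argument: the paper likewise proves the lemma by induction on $t$, with base case $Q_1=0$ and the inductive step obtained by substituting the hypothesis into the recursion~\eqref{ineq:qv}; you have merely made explicit the fixed-point arithmetic (that the claimed bound is twice the sum of the non-recursive terms, using $s\le T$) which the paper leaves to the reader.
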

\begin{proof}
    For $t=1$, $Q_1=0$ and the inequality of the lemma holds. Assume that it holds for $t=1,\ldots, s$ with  $s\leq T$. Substituting the inequalities for $t=1,\ldots,s$ into \eqref{ineq:qv}, we derive the inequality for $t=s+1$, as required.
\end{proof}

Next, we deduce an upper bound on $\mathbb{E}\left[Q_t\right]$. First, we observe that \eqref{ineq:q2} and Jensen's inequality imply the following. For $s\in [T]$,
\begin{align}\label{ineq:q-bound-mdpp}
\begin{aligned}
    \mathbb{E}\left[Q_{s+1}\right]&\leq 2\sqrt{2}\mathbb{E}[S_{s}]^{1/2}+\sqrt{2}s^{1/4}\mathbb{E}[S_{s}]^{\beta/2+1/4}+\sqrt{2C(T)\frac{\sqrt{\tmix}}{T}\sum_{t=1}^s \mathbb{E}[Q_t]}\\
    &\leq 2\sqrt{2}\mathbb{E}[S_{s}]^{1/2}+\sqrt{2}s^{1/4}\mathbb{E}[S_{s}]^{\beta/2+1/4}+sC(T)\frac{\sqrt{\tmix}}{T}+\frac{1}{2s}\sum_{t=1}^s \mathbb{E}[Q_t]\\
    &\leq 2\sqrt{2}\mathbb{E}[S_{s}]^{1/2}+\sqrt{2}s^{1/4}\mathbb{E}[S_{s}]^{\beta/2+1/4}+C(T)\tmix^{1/2}+\frac{1}{2s}\sum_{t=1}^s \mathbb{E}[Q_t],
\end{aligned}
\end{align}
where the first inequality is implied by combining~\eqref{ineq:q2} and~\eqref{eq:convexity} and applying Lemma~\ref{lem:newbound}. Next we provide an adaptive upper bound on $\mathbb{E}\left[Q_t\right]$

\begin{lemma}\label{lem:q-bound-mdpp}
    For any $t\in [T+1]$,
    $$\mathbb{E}[Q_t]\leq 4\sqrt{2}\mathbb{E}[S_{t-1}]^{1/2}+\frac{5\sqrt{2}}{3}(t-1)^{1/4}\mathbb{E}[S_{t-1}]^{\beta/2+1/4}+2C(T)\tmix^{1/2}.$$
\end{lemma}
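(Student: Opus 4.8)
The plan is to prove the bound by induction on $t$, feeding the recursive estimate~\eqref{ineq:q-bound-mdpp} back into itself. The base case $t=1$ is immediate: $Q_1 = 0$, and the right-hand side of the claimed inequality is nonnegative.

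For the inductive step, suppose the claimed bound holds for all indices in $\{1,\dots,s\}$ with $s \le T$, and aim to deduce it for $t = s+1$ (note $(s+1)-1 = s$, so the target is phrased in terms of $\mathbb{E}[S_s]$, matching the right-hand side of~\eqref{ineq:q-bound-mdpp}). The key structural fact I would use is that $a_r = F_r^2/4 + R^2 G_r^2 + H_r^2 + \delta > 0$, so $\{S_r\}$ is non-decreasing; hence $S_{r-1} \le S_s$ pointwise, and since $x \mapsto x^{1/2}$ and $x \mapsto x^{\beta/2 + 1/4}$ are increasing on $[0,\infty)$, we get $\mathbb{E}[S_{r-1}]^{1/2} \le \mathbb{E}[S_s]^{1/2}$ and $\mathbb{E}[S_{r-1}]^{\beta/2+1/4} \le \mathbb{E}[S_s]^{\beta/2+1/4}$ for every $r \le s$. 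Substituting the induction hypothesis term by term into $\frac{1}{2s}\sum_{t=1}^s \mathbb{E}[Q_t]$, and using the elementary integral comparison $\sum_{t=1}^{s}(t-1)^{1/4} = \sum_{k=0}^{s-1} k^{1/4} \le \int_0^s x^{1/4}\,dx = \frac{4}{5}s^{5/4}$, I would bound
$$\frac{1}{2s}\sum_{t=1}^s \mathbb{E}[Q_t] \le \frac{1}{2s}\left(4\sqrt{2}\,s\,\mathbb{E}[S_s]^{1/2} + \frac{5\sqrt{2}}{3}\cdot\frac{4}{5}\,s^{5/4}\,\mathbb{E}[S_s]^{\beta/2+1/4} + 2s\,C(T)\tmix^{1/2}\right),$$
which simplifies to $2\sqrt{2}\,\mathbb{E}[S_s]^{1/2} + \frac{2\sqrt{2}}{3}\,s^{1/4}\,\mathbb{E}[S_s]^{\beta/2+1/4} + C(T)\tmix^{1/2}$.

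Plugging this into~\eqref{ineq:q-bound-mdpp} and adding the explicit terms there, the coefficients combine as $2\sqrt{2} + 2\sqrt{2} = 4\sqrt{2}$, $\sqrt{2} + \frac{2\sqrt{2}}{3} = \frac{5\sqrt{2}}{3}$, and $C(T)\tmix^{1/2} + C(T)\tmix^{1/2} = 2C(T)\tmix^{1/2}$, which yields exactly $\mathbb{E}[Q_{s+1}] \le 4\sqrt{2}\,\mathbb{E}[S_s]^{1/2} + \frac{5\sqrt{2}}{3}\,s^{1/4}\,\mathbb{E}[S_s]^{\beta/2+1/4} + 2C(T)\tmix^{1/2}$, closing the induction and covering all $t \in [T+1]$. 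There is no serious obstacle here: the argument is essentially mechanical, and the only thing requiring care is that the constants $4\sqrt{2}$ and $\frac{5\sqrt{2}}{3}$ are precisely the fixed points that make the induction self-sustaining — i.e. they are chosen so that halving-and-averaging the hypothesis and re-adding the base increments reproduces the same constants — together with the correct normalization in the integral comparison $\int_0^s x^{1/4}\,dx = \frac{4}{5}s^{5/4}$.
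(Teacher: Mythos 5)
Your proposal is correct and follows essentially the same route as the paper: induction on $t$, feeding the hypothesis into the recursion~\eqref{ineq:q-bound-mdpp}, using monotonicity of $S_t$ (so $\mathbb{E}[S_{t-1}]^{1/2}\leq\mathbb{E}[S_s]^{1/2}$ and likewise for the exponent $\beta/2+1/4$) and the bound $\sum_{t=1}^{s}(t-1)^{1/4}\leq\frac{4}{5}s^{5/4}$, which the paper obtains via Corollary~\ref{cor3} and you obtain by the equivalent integral comparison. The constant bookkeeping ($2\sqrt{2}+2\sqrt{2}=4\sqrt{2}$, $\sqrt{2}+\frac{2\sqrt{2}}{3}=\frac{5\sqrt{2}}{3}$, doubling of $C(T)\tmix^{1/2}$) matches the paper's computation exactly.
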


\begin{proof}
We argue by induction. The inequality trivially holds when $t=1$ as $Q_1=0$.  Suppose that the inequality holds for $t=1,2,\ldots,s$. Then
    \begin{align*}
        \mathbb{E}[Q_{s+1}]
        &\leq 2\sqrt{2}\mathbb{E}[S_{s}]^{1/2}+\sqrt{2}s^{1/4}\mathbb{E}[S_{s}]^{\beta/2+1/4}+C(T)\tmix^{1/2}\\
        &\quad +\frac{1}{2s}\sum_{t=1}^s \left( 4\sqrt{2}\mathbb{E}[S_{t-1}]^{1/2}+\frac{10}{3}(t-1)^{1/4}\mathbb{E}[S_{t-1}]^{\beta/2+1/4}+2C(T)\tmix^{1/2}\right)\\
        &\leq 2\sqrt{2}\mathbb{E}[S_s]^{1/2}+\sqrt{2}s^{1/4}\mathbb{E}[S_s]^{\beta/2+1/4}+C(T)\tmix^{1/2}\\
        &\quad +\frac{1}{2s}\left( 4\sqrt{2}s\mathbb{E}[S_{s}]^{1/2}+\frac{4\sqrt{2}}{3}s^{5/4}\mathbb{E}[S_{s}]^{\beta/2+1/4}+2sC(T)\tmix^{1/2}\right)\\
        &= 4\sqrt{2}\mathbb{E}[S_{s}]^{1/2}+\frac{5\sqrt{2}}{3}s^{1/4}\mathbb{E}[S_{s}]^{\beta/2+1/4}+2C(T)\tmix^{1/2},
    \end{align*}
    where the first inequality is from~\eqref{ineq:q-bound-mdpp} and the second inequality is by Corollary~\ref{cor3}.
\end{proof}

\subsection{MLMC Adaptive Drift-Plus-Penalty for Stochastic-Constrained Stochastic Optimization}\label{sec:thm5proof}

Based on the bounds on $\mathbb{E}_{t-1}[g_t(\bmx)]$, $\mathbb{E}[Q_t/V_t]$, and $\mathbb{E}[Q_t]$, in this section, we prove Lemma~\ref{lem:st1} and \Cref{thm5}. To better present the proof of Lemma~\ref{lem:st1}, we divide the analysis into two parts, one for the regret and the other for the constraint violation. Then, using Lemma~\ref{lem:st1}, we prove \Cref{thm5}.

\begin{proof}[Proof of the first part of lemma \ref{lem:st1}]
We first bound 
$$\mathbb{E}\left[\sum_{t=1}^T f_t(\bmc)-\sum_{t=1}^T f_t(\bmx)\right].$$
By Lemma \ref{lem:regret}, this is less than or equal to
$$\mathbb{E}\left[\frac{\alpha_T}{V_T}R^2\right]+\mathbb{E}\left[\sum_{t=1}^T\frac{V_t F_t^2}{4\alpha_t}\right]+\mathbb{E}\left[\frac{1}{2}\sum_{t=1}^T\frac{(H_t+G_t R)^2}{V_t}\right]+\mathbb{E}\left[\sum_{t=1}^T\frac{Q_t}{V_t}g_t(\bmx)\right].$$
Here, the first term is equal to $RS_{T-1}^{1-\beta}$ which is less or equal to $RS_T^{1-\beta}$, and
the second term satisfies
$$\mathbb{E}\left[\sum_{t=1}^T\frac{V_t F_t^2}{4\alpha_t}\right]=\frac{R}{4}\mathbb{E}\left[\sum_{t=1}^T S_{t-1}^{\beta-1}F_t^2\right]\leq R\left[\sum_{t=1}^T S_{t-1}^{\beta-1}a_t\right]=O\left(\mathbb{E}\left[S_T^{\beta}\right]\right)=O\left(\mathbb{E}\left[S_T^{\beta}\right]\right)$$
where the inequality holds by Corollary~\ref{cor4}.
The third term satisfies
$$\mathbb{E}\left[\sum_{t=1}^T\frac{(H_t+G_t R)^2}{2V_t}\right]\leq
\mathbb{E}\left[R\sum_{t=1}^T\frac{R^2G_t^2+H_t^2}{S_{t-1}^\beta}\right]\leq
\mathbb{E}\left[R\sum_{t=1}^T\frac{a_t}{S_{t-1}^\beta}\right]= O\left(\mathbb{E}\left[S_T^{1-\beta}\right]\right).$$
By Jensen's inequality, we have $\mathbb{E}\left[S_T^{1-\beta}\right]\leq \mathbb{E}\left[S_T\right]^{1-\beta}$ and $\mathbb{E}\left[S_T^{\beta}\right]\leq \mathbb{E}\left[S_T\right]^{\beta}$.
The fourth term can be bounded as follows. Note that
\begin{align*}
    \mathbb{E}\left[\sum_{t=1}^T \frac{Q_t}{V_t}g_t(\bmx)\right]&=\sum_{t=1}^T \mathbb{E}\left[\frac{Q_t}{V_t}\mathbb{E}_{t-1}[g_t(\bmx)]\right].
\end{align*}
We have two upper bounds on $\mathbb{E}_{t-1}\left[g_t(\bmx)\right]$, one given in \eqref{ineq:gt} and the other one due to Lemma \ref{lem:newbound}.
Let $m(T)$ be the minimum of the two upper bounds, i.e.,
$$m(T) = \min\left\{C(T)\tmix^{1/2} T^{-1},\ D(T) \tmix^{1/4} T^{-1/2}\right\}.$$
Then it follows that
\begin{align*}
\begin{aligned}
    &\sum_{t=1}^T \mathbb{E}\left[\frac{Q_t}{V_t}\mathbb{E}_{t-1}[g_t(\bmx)]\right]\\
    &\leq \sum_{t=1}^T m(T)\left(4\sqrt{2}R\mathbb{E}[S_{T}]^{1/2-\beta}+2\sqrt{2}RT^{1/4}\mathbb{E}[S_{T}]^{1/4-\beta/2}+\frac{4RD(T)}{\delta^\beta}\tmix^{1/4}T^{1/2-\beta}\right)\\
    &\leq \sum_{t=1}^T \left(C(T)\tmix^{1/2}T^{-1}\left(4\sqrt{2}R\mathbb{E}[S_{T}]^{1/2-\beta}+2\sqrt{2}RT^{1/4}\mathbb{E}[S_{T}]^{1/4-\beta/2}\right)+\frac{4RD(T)^2}{\delta^\beta}\tmix^{1/2}T^{-\beta}\right)\\
    &=\tilde{O}\left(\tmix^{1/2}\mathbb{E}[S_T]^{1/2-\beta}+\tmix^{1/2}T^{1/4}\mathbb{E}[S_T]^{1/4-\beta/2}+\tmix^{1/2}T^{1-\beta}\right)
    \end{aligned}
\end{align*}
where the first inequality follows from \eqref{ineq:gt} and Lemmas~\ref{lem:newbound} and~\ref{lem:qv}. Combining the bounds on the four terms, we have proved the desired bound on $\mathbb{E}\left[\sum_{t=1}^T f_t(\bmc)-\sum_{t=1}^T f_t(\bmx)\right].$
\end{proof}

\begin{proof}[Proof of the second part of Lemma \ref{lem:st1}]
By Lemma~\ref{lem:constraint}, we have $$\sum_{t=1}^T g_t(\bmc)\leq Q_{T+1}+\sum_{t=1}^T\frac{G_t}{2\alpha_t}(V_t F_t+Q_t G_t).$$ 
Moreover, by Lemma~\ref{lem:q-bound-mdpp}, we have $$\mathbb{E}[Q_{T+1}]\leq 
\tilde{O}\left(\mathbb{E}[S_T]^{1/2}+T^{1/4}\mathbb{E}[S_T]^{\beta/2+1/4}+\tmix^{1/2}\right).$$
Next it follows from Corollary~\ref{cor4} that 
$$\mathbb{E}\left[\sum_{t=1}^T \frac{V_t F_t G_t}{2\alpha_t}\right]\leq \mathbb{E}\left[\sum_{t=1}^T \frac{S_{t-1}^{\beta-1}a_t}{2}\right]=O\left(\mathbb{E}\left[S_T^\beta\right]\right)=O\left(\mathbb{E}\left[S_T\right]^\beta\right).$$ 
Furthermore,
$$\mathbb{E}\left[ \sum_{t=1}^T \frac{G_t^2}{2\alpha_t}Q_t\right]\leq \mathbb{E}\left[\max_{t\in[T]}Q_t\cdot\sum_{t=1}^T \frac{R^2G_t^2}{2S_{t-1}}\right]\leq \mathbb{E}\left[\left(\frac{I+\delta}{2\delta}+\frac{1}{2}\log\frac{S_T-I-\delta}{\delta}\right)\max_{t\in[T]}Q_t\right]$$
where the second inequality is implied by Lemma \ref{lem:sum3}.
Here,
the last term can be upper bounded using the AM-GM inequality as follows.
\begin{align*}
&\left(\frac{I+\delta}{2\delta}+\frac{1}{2}\log\frac{S_T-I-\delta}{\delta}\right)\max_{t\in[T]}Q_t\\
&\leq \frac{\max_{t\in [T]}Q_t^2}{2(I\tmix)^{\beta/2+1/4}T^{\beta/2+1/2}}+\frac{1}{2}\left(\frac{I+\delta}{2\delta}+\frac{1}{2}\log\frac{S_T}{\delta}\right)^2(I\tmix)^{\beta/2+1/4}T^{\beta/2+1/2}\label{ineq:delta2}.
\end{align*}
Let $s=\argmax_{t\in[T]} Q_t$. Then by~\eqref{ineq:q-square} and~\eqref{eq:convexity},
\begin{align*}
    \mathbb{E}\left[Q_s^2\right]&\leq4\mathbb{E}[S_{s-1}]+ 2\sqrt{s}\mathbb{E}[S_{s-1}]^{\beta+1/2}+2\sum_{t=1}^{s-1} \mathbb{E}\left[Q_t \mathbb{E}_{t-1}\left[g_t(\bmc)+\grad g_t(\bmc)^\top (\bmx-\bmc)\right]\right]\\
    &\leq 4\mathbb{E}[S_{T}]+ 2\sqrt{T}\mathbb{E}[S_{T}]^{\beta+1/2}+2\sum_{t=1}^{s-1} \mathbb{E}[Q_t \mathbb{E}_{t-1}[g_t(\bmx)]].
\end{align*}
Moreover, 
\begin{align*}
    &\sum_{t=1}^{s-1} \mathbb{E}[Q_t \mathbb{E}_{t-1}[g_t(\bmx)]]\\&\leq C(T)\tmix^{1/2}T^{-1}\sum_{t=1}^{s-1} \mathbb{E}[Q_t]\\
    &\leq C(T)\tmix^{1/2}T^{-1}\sum_{t=1}^T\left(4\sqrt{2}\mathbb{E}[S_{T}]^{1/2}+\frac{5\sqrt{2}}{3}(t-1)^{1/4}\mathbb{E}[S_{T}]^{\beta/2+1/4}+2C(T)\tmix^{1/2}\right)\\
    &\leq C(T)\tmix^{1/2}T^{-1}\left(4\sqrt{2}T\mathbb{E}[S_{T}]^{1/2}+\frac{4\sqrt{2}}{3}T^{5/4}\mathbb{E}[S_T]^{\beta/2+1/4}+2C(T)\tmix^{1/2}T\right)\\
    &=\tilde{O}\left(\tmix^{1/2}\mathbb{E}[S_T]^{1/2}+\tmix^{1/2}T^{1/4}\mathbb{E}[S_T]^{\beta/2+1/4}+\tmix\right),
\end{align*}
where the first inequality follows from \eqref{ineq:gt}, the second inequality follows from Lemma \ref{lem:q-bound-mdpp}, and the third inequality follows from Corollary \ref{cor3}.
Thus, we get
 \begin{align*}
        \mathbb{E}\left[\sum_{t=1}^T g_t(\bmc)\right]
        &=\tilde{O}\left(\mathbb{E}[S_T]^{1/2}+T^{1/4}\mathbb{E}[S_T]^{\beta/2+1/4}+\tmix^{1/2}+\frac{\mathbb{E}[S_T]+T^{1/2}\mathbb{E}[S_T]^{\beta+1/2}}{\tmix^{\beta/2+1/4}T^{\beta/2+1/2}}\right.\\
        &\qquad\quad \left.+\frac{\tmix^{1/2}+\mathbb{E}[S_T]^{1/2}+\mathbb{E}[S_T]^{\beta/2+1/4}}{\tmix^{\beta/2+1/4}T^{\beta/2+1/2}}+(\log S_T)^2\tmix^{\beta/2-1/4}T^{\beta/2+1/2}\right),
    \end{align*}
    as required.
\end{proof}

\begin{proof}[Proof of Theorem \ref{thm5}]
As $\bar f$ is convex,
$$\bar{f}(\bar{\bmx}_T)-\bar{f}(\bmx^\#)\leq \frac{1}{T}\sum_{t=1}^T \left(\bar{f}(\bmc)-\bar{f}(\bmx^\#)\right). $$
We can decompose the right-hand side as follows
\begin{equation*}
    \frac{\bar{f}(\bmc)-\bar{f}(\bmx^\#)}{T}= \frac{\bar{f}(\bmc)-f_t(\bmc)}{T} + \frac{f_t(\bmc)-f_t(\bmx^\#)}{T} + \frac{f_t(\bmx^\#)-\bar{f}(\bmx^\#)}{T}.
\end{equation*}
Here, Jensen's inequality and Lemma \ref{lem:concentration} imply that
\begin{align*}
    \frac{1}{T}\mathbb{E}\left[|\bar{f}(\bmc)-f_t(\bmc)|\right]\leq\frac{1}{T}\sqrt{\mathbb{E}\left[|\bar{f}(\bmc)-f_t(\bmc)|^2\right]}=\tilde{O}(\tmix^{1/2}T^{-2}),\\
    \frac{1}{T}\mathbb{E}\left[|\bar{f}(\bmx^\#)-f_t(\bmx^\#)|\right]\leq\frac{1}{T}\sqrt{\mathbb{E}\left[|\bar{f}(\bmx^\#)-f_t(\bmx^\#)|^2\right]}=\tilde{O}(\tmix^{1/2}T^{-2}).
\end{align*}
Moreover, we have derived Proposition \ref{prop} which provides an upper bound on the expectation of $\sum_{t=1}^T f_t(\bmc)-\sum_{t=1}^T f_t(\bmx^\#)$.
Consequently,
$$\mathbb{E}\left[\bar{f}(\bar{\bmx}_T)-\bar{f}(\bmx^\#)\right]\leq \tilde{O}\left((\tmix)^{1-\beta}T^{-\beta}\right). $$

For the second part, Jensen's inequality and Lemma \ref{lem:concentration} imply that
\begin{align*}
    \frac{1}{T}\mathbb{E}\left[|\bar{g}(\bmc)-g_t(\bmc)|\right]\leq\frac{1}{T}\sqrt{\mathbb{E}\left[|\bar{g}(\bmc)-g_t(\bmc)|^2\right]}=\tilde{O}(\tmix^{1/2}T^{-2})
\end{align*}
As a result, 
\begin{align*}
    \mathbb{E}[\bar{g}(\bar{\bmx}_T)]&\leq\frac{1}{T}\sum_{t=1}^T \mathbb{E}[\bar{g}(\bmc)]\\&=\frac{1}{T}\sum_{t=1}^T\mathbb{E}[\bar{g}(\bmc)-g_t(\bmc)]+\frac{1}{T}\sum_{t=1}^T\mathbb{E}[g_t(\bmc)]\\
    &=\tilde{O}\left(\tmix^{1/2}T^{-2}+\tmix^{\beta/2+1/4}T^{\beta/2-1/2}+\tmix^{3/4-\beta/2}T^{-\beta/2-1/2}\right)\\        
    &=\tilde{O}\left(\tmix^{\beta/2+1/4}T^{\beta/2-1/2}+\tmix^{3/4-\beta/2}T^{-\beta/2-1/2}\right),
\end{align*}
where the first inequality holds because $\bar{g}$ is convex and the second equality follows from Proposition \ref{prop}.
\end{proof}

\acks{This research is supported, in part, by KAIST Starting Fund (KAIST-G04220016), FOUR Brain Korea 21 Program (NRF-5199990113928), and National Research Foundation of Korea (NRF-2022M3J6A1063021).}

\newpage

\appendix

\section{Analysis of Adaptive Drift-Plus-Penalty}\label{sec:adpp}

\Cref{alg:dpp} is a general template for adaptive variants of the drift-plus-penalty algorithm whose parameters $V_t$ and $\alpha_t$ satisfy that $\{V_t\}_{t=1}^T$, $\{\alpha_t\}_{t=1}^T$, and $\{\alpha_t/V_t\}_{t=1}^T$ are non-decreasing sequences of non-negative numbers. In this section, we analyze the general template of DPP given by \Cref{alg:dpp}, based on which we deduce performance guarantees on \Cref{alg0,alg1}.
\begin{algorithm}[tb]
	\caption{Adaptive Drift-Plus-Penalty}
	\label{alg:dpp}
	\begin{algorithmic}
		\STATE {\bfseries Initialize:} Initial iterates $\bm{x_1}\in\mathcal{X}$, $Q_1=0$.
		\FOR{$t=1$ {\bfseries to} $T$}
		\STATE Observe $f_t$ and $g_t$.
		\STATE Set penalty parameter $V_t$ and step size parameter $\alpha_t$ such that $0\leq V_{t-1}\leq V_{t}$ , $0\leq \alpha_{t-1}\leq \alpha_{t}$, and $0\leq \alpha_{t-1}/V_{t-1}\leq \alpha_{t}/V_{t}$.
		 \STATE {\bfseries Primal update:} Set $\bmn$ as
		$$\bmn = \argmin_{\bmx\in\mathcal{X}}\left\{\left(V_t\grad f_t(\bmc)+Q_t\grad g_t(\bmc)\right)^\top \bmx + \alpha_t D(\bmx,\bmc)\right\}$$
		\STATE {\bfseries Dual update:} Set $Q_{t+1}$ as
		$$Q_{t+1}=\left[Q_t + g_t(\bmc) +\grad g_t(\bmc)^\top(\bmn-\bmc) \right]_+$$
		\ENDFOR
	\end{algorithmic}
\end{algorithm}

Recall that  $\Delta_t=({Q_{t+1}^2}-{Q_t^2})/{2}$ is the {Lyapunov drift} term. The following lemma provides a bound on the drift term. 

\begin{lemma}\label{lemma:drift-bound}
	For $t\geq 1$,  $$\Delta_t\leq Q_t\left(g_t(\bmc)+\grad g_t(\bmc)^\top(\bmn-\bmc)\right)+\frac{1}{2}(H_t+G_tR)^2.$$
\end{lemma}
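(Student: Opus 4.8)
\textbf{Proof proposal for Lemma~\ref{lemma:drift-bound}.}

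The plan is to expand the definition of the drift $\Delta_t = (Q_{t+1}^2 - Q_t^2)/2$ using the dual update rule, and then estimate the resulting square from above. First I would introduce the shorthand $z_t := g_t(\bmc) + \grad g_t(\bmc)^\top(\bmn-\bmc)$, so that the dual update reads $Q_{t+1} = [Q_t + z_t]_+$. The key elementary observation is that $[y]_+^2 \leq y^2$ for any real $y$; applying this with $y = Q_t + z_t$ gives $Q_{t+1}^2 \leq (Q_t + z_t)^2 = Q_t^2 + 2 Q_t z_t + z_t^2$. Subtracting $Q_t^2$ and dividing by $2$ yields $\Delta_t \leq Q_t z_t + z_t^2/2$, which is already the claimed inequality except that I still need to bound $z_t^2/2$ by $(H_t + G_t R)^2/2$.

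For the second step I would bound $|z_t|$ directly: by the triangle inequality and the Cauchy--Schwarz-type bound for a norm and its dual, $|z_t| \leq |g_t(\bmc)| + \|\grad g_t(\bmc)\|_* \,\|\bmn - \bmc\| = H_t + G_t \|\bmn - \bmc\|$, using the definitions $H_t = |g_t(\bmc)|$ and $G_t = \|\grad g_t(\bmc)\|_*$. Then I invoke Assumption~\ref{ass:bounded}: since $\bmn, \bmc \in \mathcal{X}$ and $\Phi$ is $2$-strongly convex with respect to $\|\cdot\|$, we have $\|\bmn - \bmc\|^2 \leq D(\bmn,\bmc) \leq R^2$, hence $\|\bmn - \bmc\| \leq R$. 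Therefore $|z_t| \leq H_t + G_t R$, so $z_t^2 \leq (H_t + G_t R)^2$, and combining with the first step gives $\Delta_t \leq Q_t z_t + \tfrac12 (H_t + G_t R)^2$, which is exactly the statement.

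I do not anticipate a genuine obstacle here; this is the standard "queue drift" estimate and every ingredient is immediate from the update rule and Assumption~\ref{ass:bounded}. The only point requiring a moment's care is the inequality $[y]_+^2 \leq y^2$ (it is tight when $y \geq 0$ and strict when $y < 0$), and making sure the norm bound $\|\bmn - \bmc\| \leq R$ is actually licensed — it is, because both iterates lie in $\mathcal{X}$ and the Bregman-divergence bound $D(\bmx,\bmy)\le R^2$ in Assumption~\ref{ass:bounded} is stated for all $\bmx,\bmy\in\mathcal{X}$, combined with the strong-convexity lower bound $\|\bmx-\bmy\|^2 \le D(\bmx,\bmy)$. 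Note the bound does not use the primal update at all; it holds for any choice of $\bmn \in \mathcal{X}$, which is why it appears as a lemma for the general template in \Cref{alg:dpp}.
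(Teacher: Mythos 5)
Your proposal is correct and follows essentially the same route as the paper: expand $\Delta_t$ via $[y]_+^2\leq y^2$ applied to the dual update, then bound the quadratic remainder by $|g_t(\bmc)+\grad g_t(\bmc)^\top(\bmn-\bmc)|\leq H_t+G_t\|\bmn-\bmc\|\leq H_t+G_tR$. Your explicit justification of $\|\bmn-\bmc\|\leq R$ via the strong convexity and Bregman bound in Assumption~\ref{ass:bounded} is, if anything, slightly more careful than the paper's citation of Assumption~\ref{ass:lip} alone.
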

\begin{proof}
	As $Q_{t+1}=\max\left\{Q_t + g_t(\bmc) +\grad g_t(\bmc)^\top(\bmn-\bmc),0\right\}$, we have that $Q_{t+1}^2\leq \big( Q_t+g_t(\bmc)+\grad g_t(\bmc)^\top(\bmn-\bmc)\big)^2$. Hence, it follows that
	\begin{align*}
		\Delta_t&=\frac{Q_{t+1}^2}{2}-\frac{Q_t^2}{2}\\&\leq Q_t\left(g_t(\bmc)+\grad g_t(\bmc)^\top(\bmn-\bmc)\right)
		+\frac{1}{2}\big(g_t(\bmc)+\grad g_t(\bmc)^\top(\bmn-\bmc)\big)^2\\
		&\leq Q_t\left(g_t(\bmc)+\grad g_t(\bmc)^\top(\bmn-\bmc)\right)
		+\frac{1}{2}(H_t+G_t R)^2
	\end{align*}
	where the last inequality is from \Cref{ass:lip}.
\end{proof}

Since our drift-plus-penalty algorithm is a mirror descent version, we need the following lemma, which is obtained by substituting 
$\bmy=\bmc$, $\bmx^*=\bmn$, and $f(\bmx)=\left(V_t\grad f_t(\bmc)+Q_t\grad g_t(\bmc)\right)^\top \bmx$ into \cite[Lemma 2.1]{dpp-md}.
\begin{lemma}\citep[Equation (22)]{dpp-md}\label{lem:bregman}
	For any $\bmx\in\mathcal{X}$ and $t\geq 1$, 
	\begin{align*}
		&\left(V_t\grad f_t(\bmc)+Q_t\grad g_t(\bmc)\right)^\top (\bmn-\bmc) + \alpha_t D(\bmn,\bmc)\\
		&\leq \left(V_t\grad f_t(\bmc)+Q_t\grad g_t(\bmc)\right)^\top (\bmx-\bmc) + \alpha_t D(\bmx,\bmc)-\alpha_t D(\bmx,\bmn).
	\end{align*}
\end{lemma}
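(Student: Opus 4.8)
The plan is to derive the inequality directly from the first-order optimality condition for the primal update, combined with the three-point identity for the Bregman divergence; this is the standard mirror-descent prox inequality, and it indeed follows from \citep[Lemma 2.1]{dpp-md} under the indicated substitution, but it is worth recording the two-line argument explicitly.

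\textbf{Setup and optimality condition.} Abbreviate $\bm{z}_t := V_t\grad f_t(\bmc)+Q_t\grad g_t(\bmc)$, so that the primal update is $\bmn=\argmin_{\bmx\in\mathcal{X}}\{\bm{z}_t^\top\bmx+\alpha_t D(\bmx,\bmc)\}$. The function $h(\bmx):=\bm{z}_t^\top\bmx+\alpha_t D(\bmx,\bmc)$ is convex, and since $\Phi$ is a differentiable mirror map on $\mathcal{C}\supseteq\mathcal{X}$ we have $\grad h(\bmx)=\bm{z}_t+\alpha_t\big(\grad\Phi(\bmx)-\grad\Phi(\bmc)\big)$. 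The first-order optimality condition for the minimizer $\bmn$ of $h$ over the convex set $\mathcal{X}$ then gives
$$\big(\bm{z}_t+\alpha_t(\grad\Phi(\bmn)-\grad\Phi(\bmc))\big)^\top(\bmx-\bmn)\;\geq\;0 \qquad\text{for every }\bmx\in\mathcal{X}.$$

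\textbf{Three-point identity and rearrangement.} The second ingredient is the purely algebraic identity
$$\big(\grad\Phi(\bmn)-\grad\Phi(\bmc)\big)^\top(\bmx-\bmn)\;=\;D(\bmx,\bmc)-D(\bmx,\bmn)-D(\bmn,\bmc),$$
which is obtained by expanding the three Bregman terms using $D(\bmx,\bmy)=\Phi(\bmx)-\Phi(\bmy)-\grad\Phi(\bmy)^\top(\bmx-\bmy)$ and cancelling. Substituting this into the optimality condition yields $\bm{z}_t^\top(\bmn-\bmx)\leq\alpha_t\big(D(\bmx,\bmc)-D(\bmx,\bmn)-D(\bmn,\bmc)\big)$; subtracting $\bm{z}_t^\top\bmc$ from both sides and moving $\alpha_t D(\bmn,\bmc)$ to the left-hand side then gives exactly the stated inequality
$$\bm{z}_t^\top(\bmn-\bmc)+\alpha_t D(\bmn,\bmc)\;\leq\;\bm{z}_t^\top(\bmx-\bmc)+\alpha_t D(\bmx,\bmc)-\alpha_t D(\bmx,\bmn).$$

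The argument is entirely routine, and no genuine obstacle is expected. The only point requiring a word of care is that $\grad\Phi$ must be well-defined at the iterates so that the optimality condition can be written with gradients rather than subdifferentials; this is guaranteed by the standing assumption (\Cref{ass:bounded} and the choice of mirror map in \Cref{subsec:notations}) that $\Phi$ is a differentiable convex function on a domain $\mathcal{C}$ containing $\mathcal{X}$.
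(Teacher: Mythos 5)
Your proof is correct: the first-order optimality condition for the prox step combined with the three-point identity for the Bregman divergence yields exactly the stated inequality, and your algebra checks out. The paper itself does not prove this lemma — it merely cites \citep[Lemma 2.1, Equation (22)]{dpp-md} with the substitution $\bmy=\bmc$, $\bmx^*=\bmn$, $f(\bmx)=(V_t\grad f_t(\bmc)+Q_t\grad g_t(\bmc))^\top\bmx$ — so what you have written is precisely the standard derivation that the citation encapsulates, and it is a valid self-contained replacement for it.
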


Recall that $f_t$ and $g_t$ for the known mixing time case in \Cref{sec:tmix} correspond to one sample and are assumed to be convex. In contrast, $f_t$ and $g_t$ for the unknown mixing time setting in \Cref{sec:unknown} come from the MLMC estimation scheme with multiple data samples and thus are not necessarily convex. Nevertheless, we use the fact that $\mathbb{E}_{t-1}[f_t]$ and  $\mathbb{E}_{t-1}[g_t]$ are convex. Based on this, we deduce the following lemma.

\begin{lemma}\label{lem:regret}
	Suppose that $\mathbb{E}_{t-1}[f_t]=\mathbb{E}_{t-1}[\hat f_t]$ and $\mathbb{E}_{t-1}[g_t]=\mathbb{E}_{t-1}[\hat g_t]$ where $\hat f_t$ and $\hat g_t$ are convex functions and that $\mathbb{E}_{t-1}[\grad f_t]=\mathbb{E}_{t-1}[\grad \hat f_t]$ and $\mathbb{E}_{t-1}[\grad g_t]=\mathbb{E}_{t-1}[\grad \hat g_t]$. Then for any $\bmx\in\mathcal{X}$,
	\begin{align*}\mathbb{E}\left[\sum_{t=1}^T f_t(\bmc)-\sum_{t=1}^T f_t(\bmx)\right]&\leq
	\mathbb{E}\left[\sum_{t=1}^T\frac{Q_t}{V_{t}}g_t(\bmx)\right]+\mathbb{E}\left[\frac{\alpha_T}{V_T}R^2\right]\\
	&\quad +\mathbb{E}\left[\sum_{t=1}^T\frac{V_t F_t^2}{4\alpha_t}\right]+\frac{1}{2}\mathbb{E}\left[\sum_{t=1}^T\frac{(H_t+G_t R)^2}{V_t}\right].
	\end{align*}
\end{lemma}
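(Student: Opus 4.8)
The plan is to combine the Lyapunov drift bound (Lemma~\ref{lemma:drift-bound}) with the Bregman mirror-descent inequality (Lemma~\ref{lem:bregman}), then telescope and take expectations, using convexity of the conditional expectations $\mathbb{E}_{t-1}[\hat f_t]$ and $\mathbb{E}_{t-1}[\hat g_t]$ to pass from linearized surrogates to the functions themselves. First I would fix $\bmx\in\mathcal{X}$ and, for each $t$, add the drift bound
$\Delta_t\le Q_t\big(g_t(\bmc)+\grad g_t(\bmc)^\top(\bmn-\bmc)\big)+\tfrac12(H_t+G_tR)^2$
to $V_t$ times the ``penalty comparison'' obtained from Lemma~\ref{lem:bregman}. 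Specifically, Lemma~\ref{lem:bregman} gives
$(V_t\grad f_t(\bmc)+Q_t\grad g_t(\bmc))^\top(\bmn-\bmc)+\alpha_tD(\bmn,\bmc)\le (V_t\grad f_t(\bmc)+Q_t\grad g_t(\bmc))^\top(\bmx-\bmc)+\alpha_t D(\bmx,\bmc)-\alpha_t D(\bmx,\bmn)$,
so that, after discarding $\alpha_tD(\bmn,\bmc)\ge0$ on the left, the term $Q_t\grad g_t(\bmc)^\top(\bmn-\bmc)$ appearing in $\Delta_t$ gets replaced by $Q_t\grad g_t(\bmc)^\top(\bmx-\bmc)$ plus a $V_t\grad f_t(\bmc)^\top(\bmn-\bmx)$ error term and the Bregman difference $\alpha_t(D(\bmx,\bmc)-D(\bmx,\bmn))$.

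Next I would control the stray term $V_t\grad f_t(\bmc)^\top(\bmn-\bmc)$ together with the remaining Bregman term: by $2$-strong convexity of $\Phi$ we have $\alpha_t D(\bmn,\bmc)\ge \alpha_t\|\bmn-\bmc\|^2$, so a Cauchy--Schwarz / Young step bounds $V_t\grad f_t(\bmc)^\top(\bmc-\bmn)\le \alpha_t\|\bmn-\bmc\|^2+\tfrac{V_t^2F_t^2}{4\alpha_t}$, i.e.\ the inner product against $\bmn$ is absorbed at the cost of $V_t^2F_t^2/(4\alpha_t)$ and cancels the kept $\alpha_tD(\bmn,\bmc)$. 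What survives for each $t$ is, after dividing through by $V_t$,
$\Delta_t/V_t + f_t(\bmc)^{\mathrm{lin}} - f_t(\bmx)^{\mathrm{lin}} \le \tfrac{Q_t}{V_t}\grad g_t(\bmc)^\top(\bmx-\bmc) + \tfrac{\alpha_t}{V_t}\big(D(\bmx,\bmc)-D(\bmx,\bmn)\big) + \tfrac{V_tF_t^2}{4\alpha_t} + \tfrac{(H_t+G_tR)^2}{2V_t}$,
where ``$\mathrm{lin}$'' denotes the first-order expansions at $\bmc$. Summing over $t$: the Bregman terms telescope using monotonicity of $\alpha_t/V_t$ (an Abel-summation argument, bounding $\sum_t(\alpha_t/V_t-\alpha_{t-1}/V_{t-1})D(\bmx,\bmc)\le (\alpha_T/V_T)R^2$ via $D\le R^2$), and the drift sum telescopes to $(Q_{T+1}^2-Q_1^2)/(2V_\bullet)$—here one needs monotonicity of $V_t$ so that $\sum_t\Delta_t/V_t\ge -\sum_t(1/V_t-1/V_{t+1})Q_{t+1}^2/2\ge$ a manageable nonpositive-ish quantity; more simply, since we only want an upper bound on the $f$-regret, I drop $\sum_t\Delta_t/V_t$ after noting $Q_1=0$ makes its telescoped form nonnegative, or equivalently keep the full $\sum_t\Delta_t/V_t\ge 0$ style estimate. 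Then I take total expectation, and finally invoke the hypothesis $\mathbb{E}_{t-1}[f_t]=\mathbb{E}_{t-1}[\hat f_t]$, $\mathbb{E}_{t-1}[\grad f_t]=\mathbb{E}_{t-1}[\grad\hat f_t]$ (and likewise for $g$) together with convexity of $\hat f_t,\hat g_t$: conditioning on $\mathcal{F}_{t-1}$ (note $\bmc,Q_t$ are $\mathcal{F}_{t-1}$-measurable), the linearized quantity $\mathbb{E}_{t-1}[f_t(\bmc)+\grad f_t(\bmc)^\top(\bmx-\bmc)] = \mathbb{E}_{t-1}[\hat f_t(\bmc)+\grad\hat f_t(\bmc)^\top(\bmx-\bmc)]\le \mathbb{E}_{t-1}[\hat f_t(\bmx)]=\mathbb{E}_{t-1}[f_t(\bmx)]$, which upgrades the first-order surrogates back to $f_t(\bmc)-f_t(\bmx)$ on the left, and similarly turns $\tfrac{Q_t}{V_t}\grad g_t(\bmc)^\top(\bmx-\bmc)+\tfrac{Q_t}{V_t}g_t(\bmc)$-type pieces into $\tfrac{Q_t}{V_t}g_t(\bmx)$ on the right.

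The main obstacle is bookkeeping the sign of the telescoped drift and Bregman terms under \emph{time-varying} $V_t,\alpha_t$: one must verify that monotonicity of $\{V_t\}$, $\{\alpha_t\}$, and $\{\alpha_t/V_t\}$ (guaranteed by the template in Algorithm~\ref{alg:dpp}) really does make the leftover telescoping residuals have the right sign so they can be dropped or bounded by the single boundary term $\tfrac{\alpha_T}{V_T}R^2$; in the original i.i.d.\ DPP analysis these parameters are constant and this issue is absent. The other delicate point is the order of operations with the conditional expectations—applying the convexity/unbiasedness hypothesis term-by-term before summing, so that the $\mathcal{F}_{t-1}$-measurability of $\bmc$ and $Q_t$ is available at each step—but this is exactly the device already used throughout the paper (e.g.\ in Lemma~\ref{Q_t-bound1}), so it is routine once set up carefully.
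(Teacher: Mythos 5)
Your proposal is correct and follows essentially the same route as the paper's proof: combine the drift bound (Lemma~\ref{lemma:drift-bound}) with the mirror-descent inequality (Lemma~\ref{lem:bregman}), absorb the $\grad f_t(\bmc)^\top(\bmn-\bmc)$ term via strong convexity and a Young-type step costing $V_tF_t^2/(4\alpha_t)$, telescope the Bregman terms using monotonicity of $\alpha_t/V_t$ and drop the nonnegative drift sum (Abel summation with $Q_1=0$ and non-decreasing $V_t$), then take expectations and use the convexity of $\hat f_t,\hat g_t$ with $\mathcal{F}_{t-1}$-measurability of $\bmc$ and $Q_t$ to upgrade the linearizations. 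The two delicate points you flag (signs of the telescoped residuals under time-varying parameters, and the order of conditioning) are handled in the paper exactly as you describe.
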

\begin{proof}
	Dividing both sides of the inequality given in \Cref{lem:bregman} by $V_t$ and addding $f_t(\bmc)+{Q_t}g_t(\bmc)/ V_t$ to both sides, we get
	\begin{align*}
		&f_t(\bmc)+\frac{Q_t}{V_t}g_t(\bmc)+\left(\grad f_t(\bmc)+\frac{Q_t}{V_t}\grad g_t(\bmc)\right)^\top (\bmn-\bmc) + \frac{\alpha_t}{V_t} D(\bmn,\bmc)\notag\\
		&\leq f_t(\bmc)+\frac{Q_t}{V_t}g_t(\bmc)+\left(\grad f_t(\bmc)+\frac{Q_t}{V_t}\grad g_t(\bmc)\right)^\top (\bmx-\bmc) + \frac{\alpha_t}{V_t} D(\bmx,\bmc)-\frac{\alpha_t}{V_t} D(\bmx,\bmn).
	\end{align*}
Here, the left-hand side is bounded below by
	$$f_t(\bmc)+\grad f_t(\bmc)^\top (\bmn-\bmc)+\frac{\alpha_t}{V_t} D(\bmn,\bmc)+\frac{\Delta_t}{V_t}-\frac{1}{2V_t}(H_t+G_t R)^2$$
	by Lemma \ref{lemma:drift-bound}. Moreover, we have	
	\begin{align*}
		\grad f_t(\bmc)^\top(\bmn-\bmc)+\frac{\alpha_t}{V_t} D(\bmn,\bmc)&\geq -F_t\|\bmn-\bmc\|+\frac{\alpha_t}{V_t}\|\bmn-\bmc\|^2\\
		&=-\frac{V_t F_t^2}{4\alpha_t}+\frac{\alpha_t}{V_t}\left( \|\bmn-\bmc\|-\frac{V_t F_t}{2\alpha_t}\right)^2\\
		&\geq -\frac{V_t F_t^2}{4\alpha_t}
	\end{align*}
	where the first inequality holds by $2$-strong convexity of $\Phi$ with respect to $\| \cdot\|$. Hence, we deduce that
		\begin{align*}
		&f_t(\bmc)-\frac{V_t F_t^2}{4\alpha_t}+\frac{\Delta_t}{V_t}-\frac{1}{2V_t}(H_t+G_t R)^2\\
		&\leq f_t(\bmc)+\frac{Q_t}{V_t}g_t(\bmc)+\left(\grad f_t(\bmc)+\frac{Q_t}{V_t}\grad g_t(\bmc)\right)^\top (\bmx-\bmc) + \frac{\alpha_t}{V_t} D(\bmx,\bmc)-\frac{\alpha_t}{V_t} D(\bmx,\bmn).
	\end{align*}
	For the right-hand side of this inequality, consider
	\begin{align*}
		\sum_{t=1}^T\frac{\alpha_t}{V_t}\left(D(\bmx,\bmc)-D(\bmx,\bmn)\right)&= \frac{\alpha_1}{V_1} D(\bmx,\bmc)+\sum_{t=2}^T D(\bmx,\bmc)\left(\frac{\alpha_t}{V_t}-\frac{\alpha_{t-1}}{V_{t-1}}\right)-\frac{\alpha_T}{V_T}D(\bmx, \bmn)\\
		&\leq  \frac{\alpha_1}{V_1}R^2+\sum_{t=2}^T R^2\left(\frac{\alpha_t}{V_t}-\frac{\alpha_{t-1}}{V_{t-1}}\right)\\
		&=\frac{\alpha_T}{V_T}R^2
	\end{align*}
	where the inequality holds since $\left\{{\alpha_t}/{V_t}\right\}_{t=1}^T$ is a non-decreasing sequence. Furthermore,
	\begin{equation*}
		\sum_{t=1}^T \frac{\Delta_t}{V_t}=\frac{1}{2}\sum_{t=1}^T \frac{1}{V_t}(Q_{t+1}^2-Q_t^2)=-\frac{Q_1^2}{2V_1}+\frac{Q_{T+1}^2}{2V_T}+\frac{1}{2}\sum_{t=2}^T Q_t^2\left(\frac{1}{V_{t-1}}-\frac{1}{V_t}\right)\geq -\frac{Q_1^2}{2V_1}=0,
	\end{equation*}
	where the inequality holds since the sequence $\{V_t\}_{t=1}^T$ is non-negative and non-decreasing. Combining these inequalities, we deduce that
	\begin{align}\label{lem:regret-1}
 \begin{aligned}
		&\sum_{t=1}^T f_t(\bmc)-\sum_{t=1}^T \left(f_t(\bmc)+\grad f_t(\bmc)^\top(\bmx-\bmc)\right)\\
		&\leq
		\sum_{t=1}^T\frac{Q_t}{V_t}\left(g_t(\bmc)+\grad g_t(\bmc)^\top(\bmx-\bmc)\right)+\frac{\alpha_T}{V_T}R^2+\sum_{t=1}^T\frac{V_t F_t^2}{4\alpha_t}+\frac{1}{2}\sum_{t=1}^T\frac{(H_t+G_t R)^2}{V_t}.
  \end{aligned}
	\end{align}
    Recall that $\mathbb{E}_{t-1}[f_t]=\mathbb{E}_{t-1}[\hat f_t]$ and $\mathbb{E}_{t-1}[g_t]=\mathbb{E}_{t-1}[\hat g_t]$ where $\hat f_t$ and $\hat g_t$ are convex and that $\mathbb{E}_{t-1}[\grad f_t]=\mathbb{E}_{t-1}[\grad \hat f_t]$ and $\mathbb{E}_{t-1}[\grad g_t]=\mathbb{E}_{t-1}[\grad \hat g_t]$. Then it follows that
    \begin{align*}
        \mathbb{E}_{t-1}[f_t(\bmc)]+\mathbb{E}_{t-1}[\grad f_t(\bmc)]^\top(\bmx-\bmc)&=\mathbb{E}_{t-1}[\hat f_t(\bmc)]+\mathbb{E}_{t-1}[\grad \hat f_t(\bmc)]^\top(\bmx-\bmc)\\
        &\leq \mathbb{E}_{t-1}[\hat f_t(\bmx)]\\
        &= \mathbb{E}_{t-1}[f_t(\bmx)]
    \end{align*}
    where the second inequality holds because $\hat f_t$ is convex and $\bmc$ is $\mathcal{F}_{t-1}$-measurable. Likewise, we deduce that
    $$\mathbb{E}_{t-1}[g_t(\bmc)]+\mathbb{E}_{t-1}[\grad g_t(\bmc)]^\top(\bmx-\bmc)\leq \mathbb{E}_{t-1}[g_t(\bmx)].$$
    Taking the expectations of both sides of~\eqref{lem:regret-1}, we obtain the inequality of this lemma, as required.
\end{proof}

Next, we state a lemma that will be useful to provide an upper bound on the constraint violation.
\begin{lemma}\label{lem:constraint}
\cref{alg:dpp} achieves
	$$\|\bmn-\bmc\|\leq\frac{1}{2\alpha_t}(V_t F_t+Q_t G_t),\quad \sum_{t=1}^T g_t(\bmc)\leq Q_{T+1}+\sum_{t=1}^T\frac{G_t}{2\alpha_t}(V_t F_t+Q_t G_t).$$
\end{lemma}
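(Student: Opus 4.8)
The plan is to prove the two inequalities of Lemma~\ref{lem:constraint} in order: first the per-step bound on $\|\bmn-\bmc\|$, and then the telescoping bound on $\sum_{t=1}^T g_t(\bmc)$, which uses the first inequality together with the dual update.

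For the first inequality, I would start from the optimality condition (or the Bregman inequality of Lemma~\ref{lem:bregman}) for the primal update. Plugging $\bmx=\bmc$ into Lemma~\ref{lem:bregman} gives $\left(V_t\grad f_t(\bmc)+Q_t\grad g_t(\bmc)\right)^\top(\bmn-\bmc)+\alpha_t D(\bmn,\bmc)\leq -\alpha_t D(\bmc,\bmn)\leq 0$, using $D(\bmc,\bmc)=0$. Hence $\alpha_t D(\bmn,\bmc)\leq -\left(V_t\grad f_t(\bmc)+Q_t\grad g_t(\bmc)\right)^\top(\bmn-\bmc)\leq (V_t F_t+Q_t G_t)\|\bmn-\bmc\|$ by Cauchy--Schwarz and the definitions $F_t=\|\grad f_t(\bmc)\|_*$, $G_t=\|\grad g_t(\bmc)\|_*$. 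Since $\Phi$ is $2$-strongly convex, $\|\bmn-\bmc\|^2\leq D(\bmn,\bmc)$, so $\alpha_t\|\bmn-\bmc\|^2\leq (V_tF_t+Q_tG_t)\|\bmn-\bmc\|$; dividing by $\|\bmn-\bmc\|$ (trivial if it is zero) yields $\|\bmn-\bmc\|\leq (V_tF_t+Q_tG_t)/(2\alpha_t)$. Here I should double-check the constant: $2$-strong convexity gives $D(\bmn,\bmc)\geq\|\bmn-\bmc\|^2$, so the factor $1/(2\alpha_t)$ follows only if strong convexity is used with the right constant — this matches Assumption~\ref{ass:bounded}, which states $\|\bmx-\bmy\|^2\leq D(\bmx,\bmy)$, so in fact $\alpha_t\|\bmn-\bmc\|\leq V_tF_t+Q_tG_t$ and the $\tfrac12$ must come from a sharper step; I would instead use $\alpha_t D(\bmn,\bmc)+\alpha_t D(\bmc,\bmn)\le (V_tF_t+Q_tG_t)\|\bmn-\bmc\|$ and bound the sum of Bregman terms below by $2\|\bmn-\bmc\|^2$ to recover the $1/(2\alpha_t)$.

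For the second inequality, I would use the dual update $Q_{t+1}=[Q_t+g_t(\bmc)+\grad g_t(\bmc)^\top(\bmn-\bmc)]_+\geq Q_t+g_t(\bmc)+\grad g_t(\bmc)^\top(\bmn-\bmc)$, so that $g_t(\bmc)\leq Q_{t+1}-Q_t-\grad g_t(\bmc)^\top(\bmn-\bmc)\leq Q_{t+1}-Q_t+G_t\|\bmn-\bmc\|$. Summing over $t=1,\dots,T$, the $Q$-terms telescope to $Q_{T+1}-Q_1=Q_{T+1}$, and substituting the first inequality bounds $G_t\|\bmn-\bmc\|\leq \tfrac{G_t}{2\alpha_t}(V_tF_t+Q_tG_t)$, giving exactly the claimed bound $\sum_{t=1}^T g_t(\bmc)\leq Q_{T+1}+\sum_{t=1}^T\tfrac{G_t}{2\alpha_t}(V_tF_t+Q_tG_t)$.

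**The main obstacle** is getting the constant right in the first inequality: one must combine $\alpha_t D(\bmn,\bmc)$ with the ``extra'' $\alpha_t D(\bmc,\bmn)$ term (or use the symmetrized Bregman divergence) so that strong convexity produces $2\alpha_t\|\bmn-\bmc\|^2$ rather than $\alpha_t\|\bmn-\bmc\|^2$ on the left-hand side; everything else is a short application of Cauchy--Schwarz and telescoping. The rest of the argument is routine once that factor is pinned down.
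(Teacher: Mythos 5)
Your proposal is correct and follows essentially the same route as the paper: set $\bmx=\bmc$ in Lemma~\ref{lem:bregman}, keep both Bregman terms so that $2$-strong convexity yields $2\alpha_t\|\bmn-\bmc\|^2$ on the left and Cauchy--Schwarz gives $(V_tF_t+Q_tG_t)\|\bmn-\bmc\|$ on the right, then combine with the dual-update inequality and telescope with $Q_1=0$. Your self-correction on the constant (retaining $\alpha_t D(\bmc,\bmn)$ rather than discarding it) is exactly what the paper's proof does.
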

\begin{proof}
As $Q_{t+1}\geq Q_t+g_t(\bmc)+\grad g_t(\bmc)^\top(\bmn-\bmc)$, it follows that
	$$g_t(\bmc)\leq Q_{t+1}-Q_t-\grad g_t(\bmc)^\top(\bmn-\bmc)\leq Q_{t+1}-Q_t+G_t\|\bmn-\bmc\|.$$
	On the other hand, if we set $\bmx=\bmc$ for the inequaity of Lemma~\ref{lem:bregman}, we get
	\begin{equation*}
		\alpha_t D(\bmn,\bmc)+\alpha_t D(\bmc, \bmn)\leq \big(V_t\grad f_t(\bmc)+Q_t\grad g_t(\bmc)\big)^\top(\bmc-\bmn).
	\end{equation*}
	Here, the left-hand side is greater or equal to $2\alpha_t \|\bmn-\bmc\|^2$ while the right-hand side is less or equal to $(V_t F_t+Q_t G_t)\|\bmn-\bmc\|$. Therefore, it follows that
	\begin{equation}\label{ineq:distance}
		\|\bmn-\bmc\|\leq\frac{1}{2\alpha_t}(V_t F_t+Q_t G_t),
	\end{equation}
	which implies
	\begin{equation*}
		\sum_{t=1}^T g_t(\bmc)\leq Q_{T+1}+\sum_{t=1}^T\frac{G_t}{2\alpha_t}(V_t F_t+Q_t G_t),
	\end{equation*}
 as required.
\end{proof}

To bound the constraint violation, we still need to bound the virtual queue size $Q_{T+1}$. We also need the following lemma.
\begin{lemma}\label{lem:drift2}
	For any $\bmx\in\mathcal{X}$,
	$$\Delta_t\leq H_t^2+R^2G_t^2+Q_t \left(g_t(\bmc)+\grad g_t(\bmc)^\top (\bmx-\bmc)\right)+V_t RF_t+\alpha_t(D(\bmx,\bmc)-D(\bmx,\bmn).$$
\end{lemma}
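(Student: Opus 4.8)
\textbf{Proof proposal for Lemma~\ref{lem:drift2}.} The plan is to combine the drift bound of Lemma~\ref{lemma:drift-bound} with the Bregman three-point inequality of Lemma~\ref{lem:bregman}, exactly as in the proof of Lemma~\ref{lem:regret} but without dividing by $V_t$ and without taking expectations. Starting point: by Lemma~\ref{lemma:drift-bound},
$$\Delta_t\leq Q_t\left(g_t(\bmc)+\grad g_t(\bmc)^\top(\bmn-\bmc)\right)+\frac12(H_t+G_tR)^2,$$
so it suffices to control the term $Q_t\grad g_t(\bmc)^\top(\bmn-\bmc)$ that depends on the next iterate $\bmn$.

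To do this I would invoke Lemma~\ref{lem:bregman} with the given $\bmx$, which after rearranging gives
$$Q_t\grad g_t(\bmc)^\top(\bmn-\bmc)\leq Q_t\grad g_t(\bmc)^\top(\bmx-\bmc)+V_t\grad f_t(\bmc)^\top(\bmx-\bmn)+\alpha_t\big(D(\bmx,\bmc)-D(\bmx,\bmn)\big)-\alpha_t D(\bmn,\bmc).$$
The term $\alpha_t D(\bmn,\bmc)\geq0$ can simply be dropped. For the cross term, I would use Cauchy--Schwarz in the dual norm, $V_t\grad f_t(\bmc)^\top(\bmx-\bmn)\leq V_t F_t\|\bmx-\bmn\|$, together with the fact that $\|\bmx-\bmn\|^2\leq D(\bmx,\bmn)\leq R^2$ from Assumption~\ref{ass:bounded}, which gives $\|\bmx-\bmn\|\leq R$ and hence $V_t\grad f_t(\bmc)^\top(\bmx-\bmn)\leq V_t R F_t$.

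Substituting these two bounds back yields
$$\Delta_t\leq Q_t\left(g_t(\bmc)+\grad g_t(\bmc)^\top(\bmx-\bmc)\right)+V_tRF_t+\alpha_t\big(D(\bmx,\bmc)-D(\bmx,\bmn)\big)+\tfrac12(H_t+G_tR)^2,$$
and a final application of $(H_t+G_tR)^2\leq 2(H_t^2+G_t^2R^2)$ replaces the last term by $H_t^2+R^2G_t^2$, giving the claimed inequality. There is no real obstacle here: everything is a one-line estimate, and the only points requiring care are (i) remembering that $\bmc$ (but not $\bmn$) is $\mathcal{F}_{t-1}$-measurable is irrelevant since no expectation is taken, so the statement holds pathwise, and (ii) that the diameter bound in the chosen norm follows from $2$-strong convexity plus $D\leq R^2$, so $\|\bmx-\bmn\|\leq R$ is legitimate.
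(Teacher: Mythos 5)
Your proposal is correct and follows essentially the same route as the paper's proof: combine Lemma~\ref{lemma:drift-bound} with the three-point inequality of Lemma~\ref{lem:bregman}, drop the nonnegative term $\alpha_t D(\bmn,\bmc)$, bound $V_t\grad f_t(\bmc)^\top(\bmx-\bmn)\leq V_tRF_t$ via Cauchy--Schwarz and the diameter bound from Assumption~\ref{ass:bounded}, and use $(A+B)^2\leq 2(A^2+B^2)$. The only (immaterial) difference is that you apply the last estimate at the end rather than at the start.
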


\begin{proof}
	By Lemma \ref{lemma:drift-bound},
	\begin{align*}
		\Delta_t
		&\leq Q_t\left(g_t(\bmc)+\grad g_t(\bmc)^\top (\bmn-\bmc)\right)  + \frac{1}{2}(H_t+R G_t)^2\\
		&\leq Q_t\left(g_t(\bmc)+\grad g_t(\bmc)^\top (\bmn-\bmc)\right)  + H_t^2+R^2 G_t^2
	\end{align*}
	where the last inequality comes from the fact that $(A+B)^2\leq 2(A^2+B^2)$. Here, using Lemma~\ref{lem:bregman}, the right-hand side can be further bounded above as follows.
	\begin{align*}
		&Q_t(g_t(\bmc)+\grad g_t(\bmc)^\top(\bmn-\bmc))+H_t^2+R^2 G_t^2 \\
  &\leq 
		Q_t\big(g_t(\bmc)+\grad g_t(\bmc)^\top(\bmx-\bmc)\big)+V_t\grad f_t(\bmc)^\top(\bmx-\bmc)
		-V_t\grad f_t(\bmc)^\top(\bmn-\bmc)\\
		&\quad -\alpha_t D(\bmn,\bmc)
		+\alpha_t\big(D(\bmx,\bmc)-D(\bmx,\bmn)\big) +H_t^2+R^2 G_t^2.
	\end{align*}
Moreover, it follows from the Cauchy-Schwarz inequality that
	$$V_t\grad f_t(\bmc)^\top(\bmx-\bmc)-V_t\grad f_t(\bmc)^\top(\bmn-\bmc)=V_t\grad f_t(\bmc)^\top (\bmx-\bmn)\leq V_tRF_t$$ 
	by Cauchy-Schwarz inequality. Then we have proved the lemma, as desired.
\end{proof}

Based on Lemma~\ref{lem:drift2}, we may provide the following bound on the virtual queue size.
\begin{lemma}\label{lem:q}
	For any $\bmx\in\mathcal{X}, s\in [T]$,
	$$Q_{s+1}\leq \sqrt{2\sum_{t=1}^s\big(H_t^2+R^2 G_t^2+V_t RF_t\big)+ 2\sum_{t=1}^s Q_t \left(g_t(\bmc)+\grad g_t(\bmc)^\top (\bmx-\bmc)\right) +2R^2\alpha_s}.$$
\end{lemma}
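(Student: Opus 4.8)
\textbf{Proof proposal for Lemma \ref{lem:q}.} The plan is to sum the per-step drift bound of Lemma~\ref{lem:drift2} over $t=1,\dots,s$, telescope the Bregman-divergence terms, and take a square root. First I would recall that $\Delta_t=(Q_{t+1}^2-Q_t^2)/2$, so that $\sum_{t=1}^s\Delta_t=(Q_{s+1}^2-Q_1^2)/2=Q_{s+1}^2/2$ since $Q_1=0$. Applying Lemma~\ref{lem:drift2} with the given $\bmx\in\mathcal{X}$ to each term and summing yields
\[
\frac{Q_{s+1}^2}{2}\leq \sum_{t=1}^s\big(H_t^2+R^2G_t^2+V_tRF_t\big)+\sum_{t=1}^s Q_t\big(g_t(\bmc)+\grad g_t(\bmc)^\top(\bmx-\bmc)\big)+\sum_{t=1}^s\alpha_t\big(D(\bmx,\bmc)-D(\bmx,\bmn)\big).
\]

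The only nonroutine piece is the last sum. I would rewrite it by Abel summation as
\[
\sum_{t=1}^s\alpha_t\big(D(\bmx,\bmc)-D(\bmx,\bmn)\big)=\alpha_1 D(\bmx,\bm{x_1})+\sum_{t=2}^s D(\bmx,\bmc)\,(\alpha_t-\alpha_{t-1})-\alpha_s D(\bmx,\bm{x_{s+1}}),
\]
and then use that $\{\alpha_t\}_{t=1}^T$ is non-decreasing and non-negative (a standing hypothesis for Algorithm~\ref{alg:dpp}), together with $0\leq D(\bmx,\bmy)\leq R^2$ on $\mathcal{X}$ from Assumption~\ref{ass:bounded}, to bound this by $\alpha_1R^2+\sum_{t=2}^s R^2(\alpha_t-\alpha_{t-1})=\alpha_s R^2$ (the negative term $-\alpha_s D(\bmx,\bm{x_{s+1}})\le 0$ is simply dropped). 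This is essentially the same telescoping estimate already used in the proof of Lemma~\ref{lem:regret}.

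Substituting this bound back in gives $Q_{s+1}^2/2\leq \sum_{t=1}^s(H_t^2+R^2G_t^2+V_tRF_t)+\sum_{t=1}^sQ_t(g_t(\bmc)+\grad g_t(\bmc)^\top(\bmx-\bmc))+R^2\alpha_s$; multiplying by $2$ and taking square roots (the right-hand side is nonnegative since $Q_{s+1}^2/2$ lies below it) yields exactly the claimed inequality. There is no real obstacle here: the argument is a direct telescoping, and the only point requiring a little care is invoking monotonicity of $\{\alpha_t\}$ and the uniform upper bound $R^2$ on the Bregman divergence so that the Abel-summed divergence terms collapse to $\alpha_s R^2$; one should also note that $Q_1=0$ is what removes the boundary term and makes the sum of drifts equal to $Q_{s+1}^2/2$.
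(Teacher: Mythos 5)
Your proposal is correct and follows essentially the same route as the paper's proof: summing the drift bound of Lemma~\ref{lem:drift2}, using $Q_1=0$ so that $\sum_{t=1}^s\Delta_t=Q_{s+1}^2/2$, telescoping the Bregman terms via Abel summation with the monotonicity of $\{\alpha_t\}$ and $D(\bmx,\bmy)\leq R^2$ to obtain the $R^2\alpha_s$ bound, and then taking the square root. No gaps.
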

\begin{proof}
	From Lemma \ref{lem:drift2}, we get
	\begin{align*}
		\frac{Q_{s+1}^2}{2}&=\sum_{t=1}^s\Delta_t\\
  &\leq  \sum_{t=1}^s \big(H_t^2+R^2 G_t^2+Q_t \left(g_t(\bmc)+\grad g_t(\bmc)^\top (\bmx-\bmc)\right)+V_t RF_t\big)\\
	&\quad 	+\alpha_1 D(\bmx,\bm{x_1})+\sum_{t=2}^s D(\bmx,\bmc)(\alpha_t-\alpha_{t-1})-\alpha_s D(\bmx, \bmn).
	\end{align*}
	Since $\alpha_t$ is non-decreasing and $D(\bmx, \bmc)\leq R^2$, it follows that
 $$\alpha_1 D(\bmx,\bm{x_1})+\sum_{t=2}^s D(\bmx,\bmc)(\alpha_t-\alpha_{t-1})-\alpha_s D(\bmx, \bmn)\leq R^2\alpha_s.$$ This implies the desired bound on $Q_{s+1}$.
\end{proof}

\section{Sum of Sequences}\label{sec:sums}

In this section, we consider some series of numbers and provide bounds on their partial sums to make our paper self-contained. Given a sequence $\{x_t\}_{t=1}^\infty$ of numbers, we use notation $X_s:=\sum_{t=1}^s x_t$ to denote its partial sums.

\begin{lemma}\label{lem:sum1}
    If $f:\mathbb{R}_{+}\to\mathbb{R}_{+}$ is continuous and non-increasing, then
    $$\sum_{t=1}^T f(X_t) x_t \leq x_1 f(X_1)+\int_{X_1}^{X_T} f(x) dx\leq \int_{0}^{X_T} f(x)dx$$
    for any nonnegative $x_1,\ldots,x_T$.
\end{lemma}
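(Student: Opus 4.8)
The plan is to prove the two inequalities separately, with the left inequality being the substantive one and the right inequality following trivially from nonnegativity of $f$.

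For the left inequality, the idea is a standard integral comparison exploiting that $f$ is non-increasing. Write $X_0 = 0$ and note $X_t - X_{t-1} = x_t$. For each $t \geq 2$, since $f$ is non-increasing and $X$ is non-decreasing, on the interval $[X_{t-1}, X_t]$ we have $f(x) \geq f(X_t)$ for all $x$ in that interval, hence
\begin{equation*}
    f(X_t)\, x_t = f(X_t)(X_t - X_{t-1}) \leq \int_{X_{t-1}}^{X_t} f(x)\, dx.
\end{equation*}
Summing this over $t = 2, \ldots, T$ telescopes the integration limits to give $\sum_{t=2}^T f(X_t) x_t \leq \int_{X_1}^{X_T} f(x)\, dx$. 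Adding the $t=1$ term $f(X_1) x_1$ to both sides yields the first inequality. One caveat to handle: if some $x_t = 0$, then $X_{t-1} = X_t$ and the corresponding term and integral both vanish, so the bound is unaffected; more generally, when $x_t > 0$ the interval $[X_{t-1}, X_t]$ is nondegenerate and the comparison is valid. The continuity of $f$ ensures the integrals are well-defined (Riemann integrability), though monotonicity alone would suffice.

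For the right inequality, simply observe that $f(X_1) x_1 = f(X_1)(X_1 - 0) \leq \int_0^{X_1} f(x)\, dx$ by the same non-increasing argument applied on $[0, X_1]$ (here $f(x) \geq f(X_1)$ for $x \in [0,X_1]$), and then $\int_0^{X_1} f + \int_{X_1}^{X_T} f = \int_0^{X_T} f$. Alternatively one can just note $x_1 f(X_1) \geq 0$ is not what we want — rather we bound it above by the integral as stated. This gives $x_1 f(X_1) + \int_{X_1}^{X_T} f(x)\, dx \leq \int_0^{X_T} f(x)\, dx$.

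I do not anticipate a genuine obstacle here; the only thing to be careful about is bookkeeping with the index shift (associating $f(X_t) x_t$ with the integral over $[X_{t-1}, X_t]$ rather than $[X_t, X_{t+1}]$), which is precisely why the $t=1$ term is singled out and why the lower limit of the final integral in the middle expression is $X_1$ rather than $0$. The choice to compare $f(X_t)$ against the integral over the interval ending at $X_t$ (rather than starting at $X_t$) is what makes the telescoping work cleanly and is dictated by $f$ being non-increasing.
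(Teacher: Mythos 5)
Your proof is correct and follows essentially the same argument as the paper: comparing each term $f(X_t)x_t$ with the integral of $f$ over $[X_{t-1},X_t]$ (using monotonicity), telescoping the sum for $t\geq 2$, and bounding the singled-out $t=1$ term by $\int_0^{X_1} f$ for the second inequality. No issues.
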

\begin{proof}
    By considering the area between $f(x)$ and the $x$-axis over the interval $[X_{t-1}, X_t]$ of length $x_t$, we have $f(X_t)x_t\leq \int_{X_{t-1}}^{X_t} f(x) dx$ since $f$ is non-increasing.
    Then it follows that $$\sum_{t=1}^T f(X_t) x_t= x_1 f(X_1)+\sum_{t=2}^T f(X_t) x_t\leq x_1 f(X_1)+\int_{X_1}^{X_T} f(x) dx\leq \int_{0}^{X_T} f(x)dx,$$
    as required.
\end{proof}
As a consequence of Lemma~\ref{lem:sum1}, we deduce the following list of bounds on series.
\begin{corollary}\citep{AUER200248}\label{cor1}
    $$\sum_{t=1}^T \frac{x_t}{\sqrt{X_t}}\leq 2\sqrt{X_T}.$$
\end{corollary}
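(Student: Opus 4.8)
The plan is to apply Lemma~\ref{lem:sum1} with the specific choice $f(x) = 1/\sqrt{x}$, which is continuous and non-increasing on $\mathbb{R}_+$. First I would verify the hypotheses: the sequence $\{x_t\}_{t=1}^T$ consists of nonnegative numbers, so the partial sums $X_t = \sum_{s=1}^t x_s$ are non-decreasing, and $f(x) = x^{-1/2}$ is indeed continuous and non-increasing on $(0,\infty)$. Then Lemma~\ref{lem:sum1} directly gives
\begin{equation*}
\sum_{t=1}^T \frac{x_t}{\sqrt{X_t}} = \sum_{t=1}^T f(X_t)\, x_t \leq \int_0^{X_T} f(x)\, dx = \int_0^{X_T} \frac{dx}{\sqrt{x}}.
\end{equation*}

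Next I would evaluate the integral: $\int_0^{X_T} x^{-1/2}\,dx = \left[2\sqrt{x}\right]_0^{X_T} = 2\sqrt{X_T}$, where the improper integral at the lower endpoint converges since $x^{-1/2}$ is integrable near $0$. Combining the two displays yields $\sum_{t=1}^T x_t/\sqrt{X_t} \leq 2\sqrt{X_T}$, which is exactly the claim.

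One minor technical point worth addressing is the case where some early terms $x_t$ vanish, so that $X_t = 0$ and the summand $x_t/\sqrt{X_t}$ is formally $0/0$; the natural convention is to read such terms as $0$ (consistent with $x_t = 0$), and this does not affect the bound. A cleaner route avoiding this issue entirely is to note that the terms with $x_t = 0$ contribute nothing, so we may assume $x_1 > 0$ without loss of generality, in which case $X_t > 0$ for all $t \geq 1$ and the application of Lemma~\ref{lem:sum1} is unambiguous. There is no real obstacle here — the entire argument is a one-line specialization of the already-proved Lemma~\ref{lem:sum1} together with an elementary antiderivative computation; the only thing to be careful about is the degenerate-denominator convention, which I would dispatch in a single sentence.
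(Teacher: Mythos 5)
Your proof is correct and matches the paper's intended argument: the paper presents this corollary as an immediate consequence of Lemma~\ref{lem:sum1} with $f(x)=x^{-1/2}$, followed by the antiderivative computation $\int_0^{X_T}x^{-1/2}\,dx=2\sqrt{X_T}$, exactly as you do. Your remark on the $0/0$ convention when early $x_t$ vanish is a reasonable extra care that the paper leaves implicit.
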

\begin{corollary}\label{cor2}
    $$\sum_{t=1}^T \frac{x_t}{X_t}\leq 1+\log_{X_1}(X_T).$$
\end{corollary}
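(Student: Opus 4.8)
\textbf{Proof proposal for Corollary~\ref{cor2}.}

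The plan is to invoke Lemma~\ref{lem:sum1} with the choice $f(x) = 1/x$, which is continuous and non-increasing on $\mathbb{R}_{+}$ (well, on $(0,\infty)$; one has to be slightly careful with the value at $0$, but since the partial sums $X_t$ are strictly positive whenever $x_1 > 0$, and the statement is vacuous if $x_1 = 0$ makes $X_1 = 0$, I would first note that we may assume all $X_t > 0$, or more precisely that we only sum over indices with $X_t > 0$). Applying the first inequality of Lemma~\ref{lem:sum1} gives
\begin{equation*}
\sum_{t=1}^T \frac{x_t}{X_t} \;\leq\; x_1 f(X_1) + \int_{X_1}^{X_T} f(x)\, dx \;=\; \frac{x_1}{X_1} + \int_{X_1}^{X_T} \frac{dx}{x}.
\end{equation*}

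Next I would simplify the two terms on the right. Since $X_1 = x_1$, the first term is exactly $x_1/X_1 = 1$. The integral evaluates to $\log X_T - \log X_1 = \log(X_T/X_1)$, which is precisely $\log_{X_1}(X_T)$ only after a change of logarithm base — actually $\log_{X_1}(X_T) = \ln X_T / \ln X_1$, so I should be careful: the intended reading of the corollary's notation ``$\log_{X_1}(X_T)$'' is almost certainly $\log(X_T) - \log(X_1)$ in whatever fixed base is used elsewhere (natural log), i.e. $\log(X_T/X_1)$. I would write the bound as $1 + \log(X_T/X_1)$ and remark that this matches the stated form. Combining, $\sum_{t=1}^T x_t/X_t \leq 1 + \log(X_T/X_1)$, which is the claim.

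The only genuine subtlety — and the step I would flag as needing a sentence of care rather than a hard obstacle — is the degenerate case $x_1 = 0$ (hence possibly several leading $x_t = 0$), where $f(X_1)$ is undefined. The clean fix is to observe that terms with $x_t = 0$ contribute nothing to the sum, so we may discard them and relabel, after which the first nonzero partial sum is positive and Lemma~\ref{lem:sum1} applies verbatim; alternatively, one notes the corollary is only ever used with strictly positive sequences (as in the applications to $S_t \geq \delta > 0$ and $\alpha_t$ in the main proofs), so assuming $x_1 > 0$ is harmless. Everything else is a one-line computation of an elementary integral, so there is no real difficulty here; the corollary is essentially just a packaged special case of Lemma~\ref{lem:sum1}.
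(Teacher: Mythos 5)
Your proposal is correct and matches the paper's (implicit) argument exactly: the corollary is stated as a direct consequence of Lemma~\ref{lem:sum1} with $f(x)=1/x$, giving $x_1/X_1+\int_{X_1}^{X_T}x^{-1}\,dx = 1+\log(X_T/X_1)$, which is the intended reading of the notation $\log_{X_1}(X_T)$. Your side remarks on the degenerate case $x_1=0$ and on the logarithm notation are reasonable clarifications but do not change the substance.
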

Next we consider the following.
\begin{lemma}\label{lem:sum2}
    If $f:\mathbb{R}_{+}\to\mathbb{R}_{+}$ is continuous and non-decreasing, then
    $$\sum_{t=1}^T f(X_t) x_{t+1} \leq \int_{X_1}^{X_{T+1}} f(x)dx$$
    for any nonnegative $x_1,\ldots,x_T$.
\end{lemma}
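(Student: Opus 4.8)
The plan is to mimic the argument used for Lemma~\ref{lem:sum1}, but exploiting monotonicity in the opposite direction. Recall $X_s = \sum_{t=1}^s x_t$, so $X_{t+1} - X_t = x_{t+1} \ge 0$, which means the interval $[X_t, X_{t+1}]$ is well-defined (possibly degenerate when $x_{t+1}=0$, in which case its contribution on both sides vanishes). The key pointwise estimate I would establish is that for each $t \in [T]$,
$$f(X_t)\, x_{t+1} = f(X_t)\,(X_{t+1}-X_t) \le \int_{X_t}^{X_{t+1}} f(x)\, dx,$$
which holds because $f$ is non-decreasing, so $f(X_t) \le f(x)$ for every $x \in [X_t, X_{t+1}]$, and hence the constant $f(X_t)$ underestimates the average of $f$ over that interval.

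Next I would sum this inequality over $t = 1, \ldots, T$ and observe that the right-hand side telescopes, since consecutive intervals $[X_t, X_{t+1}]$ partition $[X_1, X_{T+1}]$:
$$\sum_{t=1}^T f(X_t)\, x_{t+1} \le \sum_{t=1}^T \int_{X_t}^{X_{t+1}} f(x)\, dx = \int_{X_1}^{X_{T+1}} f(x)\, dx,$$
which is exactly the claimed bound. I would also remark that this implicitly uses nonnegativity of $x_1, \ldots, x_{T+1}$ (so that the $X_s$ are non-decreasing and the intervals are oriented correctly); the statement should be read with $x_{T+1} \ge 0$ included among the hypotheses.

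There is essentially no hard part here: the only point requiring a moment's care is the direction of the monotonicity comparison (for non-decreasing $f$ the left endpoint value is the one bounded by the integral, as opposed to the right endpoint value in Lemma~\ref{lem:sum1}), and handling the degenerate case $x_{t+1} = 0$, which is immediate. This lemma is presumably a stepping stone toward bounds of the form $\sum_t f(X_t) x_{t+1} = O\big(\int_0^{X_{T+1}} f\big)$ used in the AdaGrad-style analysis, e.g. with $f(x) = x^{\beta}$ or $f(x) = x^{\beta-1}$, so after proving it I would expect it to feed directly into corollaries analogous to Corollaries~\ref{cor1} and~\ref{cor2}.
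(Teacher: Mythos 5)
Your proof is correct and uses essentially the same argument as the paper: the pointwise bound $f(X_t)x_{t+1}\leq \int_{X_t}^{X_{t+1}}f(x)\,dx$ from monotonicity, summed and telescoped over $t$ (the paper splits off the $t=1$ term before integrating, but that is a cosmetic difference). Your remark that the hypothesis should implicitly cover $x_{T+1}\geq 0$ is a fair observation about the statement, not a gap in the argument.
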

\begin{proof}
    By considering the area between $f(x)$ and the $x$-axis over the interval $[X_t, X_{t+1}]$ of length $x_{t+1}$, we have $f(X_t)x_{t+1}\leq \int_{X_t}^{X_{t+1}} f(x) dx$ since $f$ is non-decreasing. Then it follows that 
    $$\sum_{t=1}^T f(X_t) x_{t+1}= x_2f(X_1)+\sum_{t=2}^T f(X_t) x_{t+1}\leq x_2 f(X_1)+\int_{X_2}^{X_{T+1}} f(x) dx\leq \int_{X_1}^{X_{T+1}} f(x)dx,$$
    as required.
\end{proof}
Lemma~\ref{lem:sum2} implies the following.
\begin{corollary}\label{cor3}
    For $q>0$,
    $$\sum_{t=1}^T t^q\leq \frac{1}{q+1}\big( (T+1)^{q+1}-1\big).$$
\end{corollary}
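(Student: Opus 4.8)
The plan is to obtain Corollary~\ref{cor3} as an immediate special case of Lemma~\ref{lem:sum2}. First I would apply that lemma to the constant sequence $x_t = 1$ for every $t$, so that the partial sums reduce to $X_s = \sum_{t=1}^s x_t = s$. Next I would take $f(x) = x^q$; since $q > 0$, this function is continuous and non-decreasing on $\mathbb{R}_{+}$, so the hypotheses of Lemma~\ref{lem:sum2} are satisfied.

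With these choices the left-hand side of Lemma~\ref{lem:sum2} becomes $\sum_{t=1}^T f(X_t)\, x_{t+1} = \sum_{t=1}^T t^q$, while the right-hand side is
\[
\int_{X_1}^{X_{T+1}} f(x)\,dx = \int_{1}^{T+1} x^q\,dx = \frac{1}{q+1}\left((T+1)^{q+1} - 1\right),
\]
so combining the two inequalities yields precisely the claimed bound. (Alternatively, one could argue directly by comparing $t^q$ with the area $\int_{t}^{t+1} x^q\,dx$ under the non-decreasing curve $x\mapsto x^q$, summing over $t=1,\dots,T$, and evaluating the telescoped integral; this is really the same computation that underlies Lemma~\ref{lem:sum2}.)

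There is essentially no obstacle here: the only points needing a line of justification are that $x \mapsto x^q$ is non-decreasing for $q>0$, which is elementary, and that the integration limits are $X_1 = 1$ and $X_{T+1} = T+1$, which is immediate from $X_s = s$. I would therefore keep the proof to two or three lines, simply instantiating Lemma~\ref{lem:sum2} and carrying out the elementary integral.
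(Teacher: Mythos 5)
Your proposal is correct and is exactly the paper's argument: the paper derives Corollary~\ref{cor3} by instantiating Lemma~\ref{lem:sum2} with $x_t\equiv 1$ (so $X_s=s$) and $f(x)=x^q$, then evaluating $\int_1^{T+1}x^q\,dx$. Nothing further is needed.
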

Moreover, when each $x_t$ is bounded by some fixed constant, we can deduce the following result. For ease of notation, we start a sequence with $x_0$ and, with abuse of notation, define partial sum $X_s=\sum_{t=0}^s x_t$ with $x_0=X_0=\delta$.

\begin{lemma}\label{lem:sum3}
 If $0\leq x_t\leq C$ for $t=0,\ldots, T$ for some fixed constant $C$ and  $f:\mathbb{R}_{++}\to\mathbb{R}_{+}$ is continuous and non-increasing, then
    $$\sum_{t=1}^T f(X_{t-1})x_{t}\leq
        Cf(\delta)+\int_{\delta}^{\max\{\delta, X_T-C\}}f(x)dx.$$
\end{lemma}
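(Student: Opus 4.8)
The plan is to mimic the proof of Lemma~\ref{lem:sum1}, but to account for the ``shift'' that the summand uses $f(X_{t-1})$ rather than $f(X_t)$, and to handle the first term separately because $f$ is only assumed finite on $\mathbb{R}_{++}$ (so we cannot integrate down to $0$ and must start at $\delta = X_0$). Since $f$ is non-increasing and $X_{t-1}\le X_t$, we have, for each $t\ge 1$, the geometric estimate $f(X_t)x_t \le \int_{X_{t-1}}^{X_t} f(x)\,dx$, exactly as in Lemma~\ref{lem:sum1}. But here the summand is $f(X_{t-1})x_t$, which is larger. The standard fix is to shift the interval of integration: because $x_t\le C$, the interval $[X_{t-1}, X_t]$ has length at most $C$, so $[X_{t-1}-C, X_{t-1}]$ still overlaps a region on which the value of $f$ is at least $f(X_{t-1})$ (as $f$ is non-increasing). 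Concretely, $f(X_{t-1})x_t \le \int_{X_{t-1}-C}^{X_{t-1}} f(x)\,dx$ provided $X_{t-1}-C\ge \delta$, i.e. provided $X_{t-1}\ge \delta + C$.

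First I would split off the indices $t$ with $X_{t-1} < \delta + C$. Since $\{X_t\}$ is non-decreasing and $x_t\le C$ with $X_0=\delta$, the set of such $t$ is an initial segment, and on it $x_t\le C$ gives a crude bound; in fact one can absorb the whole contribution of the ``small'' indices into a single term $C f(\delta)$: for the very first index ($t=1$), $f(X_0)x_1 = f(\delta)x_1 \le C f(\delta)$, and for later small indices one telescopes $\sum f(X_{t-1})x_t \le \int$ over $[\delta, X_{t-1}]$-type intervals that all lie below $\delta+C$ — alternatively, one treats the boundary case directly. Then, for the ``large'' indices with $X_{t-1}\ge \delta+C$, apply the shifted estimate $f(X_{t-1})x_t \le \int_{X_{t-1}-C}^{X_t - C} f(x)\,dx$ (using $x_t = X_t - X_{t-1}$, so the shifted interval $[X_{t-1}-C, X_t-C]$ has the right length $x_t$ and lies in the region where $f\ge f(X_{t-1})$ is NOT quite what we want — rather we want the interval to the left so that $f$ there dominates $f(X_{t-1})$; so the correct interval is $[X_{t-1}-C, X_t-C]$ only if we instead argue $f(X_{t-1}) \le f(y)$ for $y\le X_{t-1}$, so $f(X_{t-1})x_t \le \int_{X_{t-1}-C}^{X_{t-1}} f$, and these intervals $[X_{t-1}-C,X_{t-1}]$ telescope with overlaps). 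I would therefore use the cleaner telescoping form: for large indices, $f(X_{t-1})x_t \le \int_{X_t - C}^{X_{t-1}} f \;+\; (\text{correction})$, but the simplest rigorous route is the bound $f(X_{t-1})x_t\le\int_{X_{t-1}-C}^{X_t-C}f(x)\,dx$ combined with monotonicity, after which the integrals over the consecutive intervals $[X_{t-1}-C, X_t-C]$ telescope to $\int_{\delta'-C}^{X_T-C}f$ for the first large index's left endpoint $\delta'$, which is $\ge \delta$; extending the lower limit down to $\delta$ only increases the integral, and the whole ``small-index'' block plus the $t=1$ term is bounded by $Cf(\delta)$, giving the claimed $Cf(\delta) + \int_\delta^{\max\{\delta, X_T-C\}} f(x)\,dx$. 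The $\max\{\delta, X_T - C\}$ simply handles the degenerate case $X_T < \delta + C$, where there are no large indices and the first term alone suffices.

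The main obstacle — really a bookkeeping obstacle rather than a conceptual one — is getting the index-shift and the interval endpoints exactly right so that (a) the shifted intervals $[X_{t-1}-C, X_t - C]$ genuinely tile a subinterval of $[\delta, X_T - C]$ without double counting, and (b) the leftover initial indices (those with $X_{t-1}-C<\delta$) are bounded by $Cf(\delta)$ and not something larger, using only $x_t\le C$ and $f$ non-increasing so that $f(X_{t-1})\le f(\delta)$ for all $t\ge 1$. I expect the clean way is: let $t^\ast$ be the smallest index with $X_{t^\ast-1}\ge \delta+C$ (if none, the sum is at most $\sum_{t} f(\delta)x_t$ restricted appropriately, bounded by $Cf(\delta)$ since the partial sums never exceed $\delta+C$... this needs a touch of care); for $t<t^\ast$ bound by $f(\delta)x_t$ and sum to at most $C f(\delta)$; for $t\ge t^\ast$ use the shifted-integral telescoping. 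Assembling these two pieces yields the stated inequality.
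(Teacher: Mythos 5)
Your core idea --- compare each rectangle of height $f(X_{t-1})$ with an integral shifted left by $C$ --- is exactly the device the paper uses (it formalizes the shift by extending $f$ to $\tilde f$, equal to $f(\delta)$ on $(-\infty,\delta]$, and bounding every term by $\int_{X_{t-1}}^{X_t}\tilde f(x-C)\,dx$). However, your treatment of the initial block has a genuine error, and it is precisely the point where the constant $C$ in the statement is at stake. With your definition of $t^\ast$ as the smallest index with $X_{t^\ast-1}\ge\delta+C$, the increments of the ``small'' block sum to
\begin{equation*}
\sum_{t=1}^{t^\ast-1}x_t \;=\; X_{t^\ast-1}-X_0 \;=\; X_{t^\ast-1}-\delta \;\in\;[C,\,2C),
\end{equation*}
since $X_{t^\ast-1}\ge\delta+C$ by the definition of $t^\ast$ while $X_{t^\ast-1}=X_{t^\ast-2}+x_{t^\ast-1}<(\delta+C)+C$. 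So the step ``for $t<t^\ast$ bound by $f(\delta)x_t$ and sum to at most $Cf(\delta)$'' fails; the crude bound only gives up to $2Cf(\delta)$. And this loss is real, not just slack in the argument: take $X_{t^\ast-2}=\delta+C-\epsilon$, $x_{t^\ast-1}=C$, and $f$ dropping steeply just after $\delta$ (so $\int_\delta^{X_{t^\ast-1}-C}f\approx 0$); then your achievable total, $f(\delta)(X_{t^\ast-1}-\delta)+\int_{X_{t^\ast-1}-C}^{X_T-C}f$, exceeds the claimed $Cf(\delta)+\int_\delta^{\max\{\delta,X_T-C\}}f$ by nearly $Cf(\delta)$. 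The same issue appears in your ``no large indices'' case, which you flagged but did not resolve: there $\sum_t x_t=X_T-\delta$ can also approach $2C$.

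The missing idea is to refine the per-term bound on the straddling/small indices rather than flatten them all to $f(\delta)x_t$: for every $t$, bound $f(X_{t-1})x_t$ by $f(\delta)\cdot\bigl|[X_{t-1}-C,\,X_t-C]\cap(-\infty,\delta]\bigr|+\int_{[X_{t-1}-C,\,X_t-C]\cap[\delta,\infty)}f(u)\,du$, which is legitimate because $u\le X_t-C\le X_{t-1}$ on the shifted interval and $f(X_{t-1})\le f(\delta)$ (as $X_{t-1}\ge\delta$). Summing, the shifted intervals tile $[\delta-C,\,X_T-C]$, whose portion below $\delta$ has length $\min\{C,\,X_T-\delta\}\le C$, giving exactly $Cf(\delta)+\int_\delta^{\max\{\delta,X_T-C\}}f(x)\,dx$. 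This refined bookkeeping is what the paper's $\tilde f$-translation argument accomplishes in one stroke; as written, your proposal proves only the weaker inequality with $2Cf(\delta)$ (harmless for the paper's $O(\cdot)$ applications, but not the lemma as stated).
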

\begin{proof}
    If we consider the area between $f(x)$ and the $x$-axis over the interval $[X_{t-1}, X_t]$, we have $f(X_{t-1})x_t\geq \int_{X_{t-1}}^{X_t}f(x)dx$ in which the inequality direction is the opposite of what we want. However, since $x_t\leq C$, we can use the idea of translation by $C$ in the $x$-axis direction in the following way. Let
    $$\tilde{f}(x)=\begin{cases}
        f(\delta), &\quad x\in (-\infty,\delta],\\
        f(x), &\quad x\in (\delta,\infty).
    \end{cases}$$
    Then the graph of the translation $\tilde{f}(x-C)$ is above the squares of height $f(X_{t-1})$ on the interval $[X_{t-1}, X_t]$. Thus, 
    $$\sum_{t=1}^T f(X_{t-1})x_{t}\leq\int_{X_0}^{X_T}\tilde{f}(x-C)dx\leq\begin{cases}
        (X_T-\delta)f(\delta)\leq Cf(\delta), &\quad \text{if }X_T\leq \delta+C,\\
        Cf(\delta)+\int_{\delta}^{X_T-C}f(x)dx, &\quad \text{if }X_T> \delta+C
    \end{cases}$$
   which implies the desired statement of this lemma.
\end{proof}
As a corollary of Lemma~\ref{lem:sum3} with $f(x) = x^{-\gamma}$, we deduce the following inequality.
\begin{corollary}\label{cor4}
    If $x_t\leq C, 0<\gamma\neq 1$, then
    $$\sum_{t=1}^T X_{t-1}^{-\gamma}x_t\leq C\delta^{-\gamma}+\frac{1}{1-\gamma}\left(\max\{\delta, X_T-C\}^{1-\gamma}-\delta^{1-\gamma}\right).$$
\end{corollary}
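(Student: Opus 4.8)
The plan is to obtain \Cref{cor4} as an immediate specialization of \Cref{lem:sum3} to the power function $f(x)=x^{-\gamma}$. First I would verify that this choice of $f$ meets the hypotheses of \Cref{lem:sum3}: on $\mathbb{R}_{++}$ the map $x\mapsto x^{-\gamma}$ is continuous, nonnegative, and --- since $\gamma>0$ --- strictly decreasing, hence non-increasing. The sequence $\{x_t\}_{t=0}^T$ satisfies $0\le x_t\le C$ by assumption (recall that in the setup of \Cref{lem:sum3} one has $x_0=X_0=\delta$, so in particular $\delta\le C$ is implicit in the corollary's hypothesis). Therefore \Cref{lem:sum3} applies and yields
$$\sum_{t=1}^T X_{t-1}^{-\gamma}x_t \;\le\; C\delta^{-\gamma} + \int_{\delta}^{\max\{\delta,\,X_T-C\}} x^{-\gamma}\,dx.$$

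The remaining step is purely a computation of the antiderivative: for $\gamma\neq 1$ and any $M\ge\delta>0$,
$$\int_{\delta}^{M} x^{-\gamma}\,dx \;=\; \left[\frac{x^{1-\gamma}}{1-\gamma}\right]_{\delta}^{M} \;=\; \frac{1}{1-\gamma}\left(M^{1-\gamma}-\delta^{1-\gamma}\right).$$
Substituting $M=\max\{\delta,\,X_T-C\}$ into the previous display gives exactly the claimed bound. One can sanity-check the sign in both regimes: if $\gamma>1$ then $1-\gamma<0$ and $M^{1-\gamma}-\delta^{1-\gamma}\le 0$, so the added term is nonnegative; if $0<\gamma<1$ the same conclusion holds with both factors reversed.

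There is no genuine obstacle here --- \Cref{cor4} is a routine corollary whose only content beyond \Cref{lem:sum3} is the antiderivative of $x^{-\gamma}$ and the requirement $\gamma\neq1$ (which rules out the logarithmic antiderivative). The single point that warrants a moment's care is confirming that the hypothesis ``$0\le x_t\le C$ for $t=0,\ldots,T$'' of \Cref{lem:sum3}, which includes the index $t=0$ with $x_0=\delta$, is indeed what is intended by the corollary's shorthand ``$x_t\le C$''; this matters because the translation-by-$C$ argument inside the proof of \Cref{lem:sum3} uses the \emph{same} constant $C$ as a bound on every increment, $x_0$ included.
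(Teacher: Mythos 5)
Your proposal is correct and matches the paper's argument: the paper obtains \Cref{cor4} precisely by applying \Cref{lem:sum3} with $f(x)=x^{-\gamma}$ and evaluating the resulting integral, which is exactly what you do. Your additional remark about the index $t=0$ (so that $\delta\leq C$ is implicitly assumed) is a fair point of care but does not change the argument.
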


\section{Online Convex Optimization with Adversarial Losses and Constraints}\label{sec:oco-adversarial}

In this section, we show the performance of \Cref{alg1}, which is an AdaGrad-style variant of the drift-plus-penalty algorithm for online convex optimization with adversarial loss and constraint functions. To deal with adversarial constraint functions, we set the parameters $V_t$ and $\alpha_t$ differently as follows.
\begin{align*}
    a_t:=\frac{F_t^2}{4}+R^2 G_t^2+H_t^2, \quad S_t:=\sum_{s=1}^t a_s
\end{align*}
and then set parameters as
\begin{equation}\label{eq:param2}
    V_t=\frac{S_{t}^\beta}{R},\quad \alpha_t=\frac{S_{t}}{R^2},
\end{equation}
for some $0<\beta\leq 1/2$. One distinction from~\eqref{sec:unknown} is the presence of additional parameter $\delta$, and another difference is that $V_t$ and $\alpha_t$ are defined with $S_t$, not $S_{t-1}$.
We now assume that the convex constraint functions $g_1, \ldots, g_T$ as well as the convex loss functions $f_1, \ldots, f_T$ are chosen adversarially. 
Following~\cite{yu-neely}, we set the benchmark $\bm{x^\circ}$ as an optimal solution to
    $$\min\quad \sum_{t=1}^T f_t(\bmx)\quad\text{subject to}\quad g_t(\bmx)\leq 0\quad \text{for $t=1,\ldots,T$}.$$
Then the goal is to obtain upper bounds on
\begin{align*}
    \operatorname{Regret}(T)=\sum_{t=1}^T f_t(\bmc)-\sum_{t=1}^T f_t(\bm{x^\circ}),\quad
    \operatorname{Violation}(T)=\sum_{t=1}^T g_t(\bmc)
\end{align*}
in sublinear orders of $T$ by properly choosing our inputs $\bmx_t$. For this, we need the following theorem.
\begin{theorem}\label{thm:main}
    Algorithm~\ref{alg1} with $V_t$ and $\alpha$ set as in~\eqref{eq:param2} guarantees that 
    \begin{align*}
        \operatorname{Regret}(T)&= O\left(S_T^{1-\beta}\right),\quad \operatorname{Violation}(T)= O\left(S_T^{1/2}+T^{1/4}S_T^{\beta/2+1/4}\right).
    \end{align*}
\end{theorem}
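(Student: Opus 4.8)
\textbf{Proof plan for \Cref{thm:main}.} The strategy is to run the generic adaptive drift-plus-penalty analysis of \Cref{sec:adpp} with the parameters of~\eqref{eq:param2}, and to exploit the stronger feasibility of the comparator: here $g_t(\bm{x^\circ})\le 0$ for \emph{every} $t$, not merely on average, which is precisely what lets us dispose of all the queue-coupling terms that in the Markovian setting required mixing-time arguments. Since $f_t,g_t$ are genuinely convex in this section, I take $\hat f_t=f_t$, $\hat g_t=g_t$ in Lemma~\ref{lem:regret}, so inequality~\eqref{lem:regret-1} holds deterministically. Convexity of $g_t$ gives $g_t(\bmc)+\grad g_t(\bmc)^\top(\bm{x^\circ}-\bmc)\le g_t(\bm{x^\circ})\le 0$, and since $Q_t\ge 0$, $V_t>0$, the term $\sum_t\frac{Q_t}{V_t}\big(g_t(\bmc)+\grad g_t(\bmc)^\top(\bm{x^\circ}-\bmc)\big)$ is nonpositive and drops out; using $f_t(\bmc)+\grad f_t(\bmc)^\top(\bm{x^\circ}-\bmc)\le f_t(\bm{x^\circ})$ this leaves
$$\operatorname{Regret}(T)\le\frac{\alpha_T}{V_T}R^2+\sum_{t=1}^T\frac{V_tF_t^2}{4\alpha_t}+\frac12\sum_{t=1}^T\frac{(H_t+G_tR)^2}{V_t}.$$

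With $V_t=S_t^\beta/R$ and $\alpha_t=S_t/R^2$, the first term is $RS_T^{1-\beta}$; the second is $\le R\sum_t S_t^{\beta-1}a_t$ because $F_t^2/4\le a_t$; the third is $\le 2R\sum_t S_t^{-\beta}a_t$ because $(H_t+G_tR)^2\le 2(H_t^2+R^2G_t^2)\le 2a_t$. Both sums are controlled by Lemma~\ref{lem:sum1} applied with the non-increasing functions $x^{\beta-1}$ and $x^{-\beta}$ (non-increasing as $0<\beta\le 1/2$), giving $\sum_t S_t^{\beta-1}a_t\le S_T^{\beta}/\beta$ and $\sum_t S_t^{-\beta}a_t\le S_T^{1-\beta}/(1-\beta)$. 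Since $\beta\le 1-\beta$, the $S_T^{1-\beta}$ contributions dominate and $\operatorname{Regret}(T)=O(S_T^{1-\beta})$.

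For the violation I start from Lemma~\ref{lem:constraint}, $\sum_t g_t(\bmc)\le Q_{T+1}+\sum_t\frac{G_t}{2\alpha_t}(V_tF_t+Q_tG_t)$. I bound $Q_{s+1}$ via Lemma~\ref{lem:q} with $\bmx=\bm{x^\circ}$, where once again $2\sum_t Q_t\big(g_t(\bmc)+\grad g_t(\bmc)^\top(\bm{x^\circ}-\bmc)\big)\le 0$ and drops; using $H_t^2+R^2G_t^2\le a_t$, $2R^2\alpha_s=2S_s$, and (as in the derivation of~\eqref{ineq:q-square}) $\sum_{t\le s}V_tRF_t=\sum_{t\le s}S_t^{\beta}F_t\le S_s^{\beta}\sqrt{s}\sqrt{\sum_{t\le s}F_t^2}\le 2\sqrt{s}\,S_s^{\beta+1/2}$ by Cauchy--Schwarz and $\sum F_t^2\le 4S_s$, I obtain $Q_t\le 2\sqrt{S_t}+\sqrt2\,t^{1/4}S_t^{\beta/2+1/4}$, hence $\max_{t\in[T+1]}Q_t=O(S_T^{1/2}+T^{1/4}S_T^{\beta/2+1/4})$. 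The term $\sum_t\frac{G_tV_tF_t}{2\alpha_t}=\frac R2\sum_t G_tF_tS_t^{\beta-1}=O(R\sum_t a_tS_t^{\beta-1})=O(S_T^{\beta})$ by Lemma~\ref{lem:sum1} and $G_tF_t=O(a_t)$ (AM--GM). For the remaining term I write $\sum_t\frac{G_t^2}{2\alpha_t}Q_t=\frac{R^2}{2}\sum_t\frac{G_t^2}{S_t}Q_t$ and insert the pointwise bound on $Q_t$: $\frac{Q_t}{S_t}\le 2S_t^{-1/2}+\sqrt2\,t^{1/4}S_t^{\beta/2-3/4}$, so the sum is at most $O\big(\sum_t a_tS_t^{-1/2}\big)+O\big(T^{1/4}\sum_t a_tS_t^{\beta/2-3/4}\big)=O(\sqrt{S_T})+O(T^{1/4}S_T^{\beta/2+1/4})$, the first by Corollary~\ref{cor1} and the second by Lemma~\ref{lem:sum1} (note $\beta/2-3/4<0$). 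Collecting, and absorbing $S_T^{\beta}$ into $S_T^{1/2}$, gives $\operatorname{Violation}(T)=O(S_T^{1/2}+T^{1/4}S_T^{\beta/2+1/4})$.

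The monotonicity hypotheses needed by the template lemmas are immediate since $V_t$, $\alpha_t=S_t/R^2$, and $\alpha_t/V_t=S_t^{1-\beta}/R$ are all non-decreasing. The only genuine nuisance is that~\eqref{eq:param2} carries no $\delta$-regularizer, so $S_t$ can vanish for the first few rounds; this is harmless because such rounds have $F_t=G_t=H_t=0$ and contribute nothing to either performance measure (equivalently, assume $a_1>0$ without loss of generality). The step I expect to be the crux is the last one: bounding $\sum_t\frac{G_t^2}{2\alpha_t}Q_t$ at the rate $O(S_T^{1/2}+T^{1/4}S_T^{\beta/2+1/4})$ forces one to feed the adaptive queue estimate back into the sum term-by-term; the coarser route of pulling out $\max_tQ_t$ and bounding $\sum_t a_t/S_t$ by Corollary~\ref{cor2} would cost an extra $\log S_T$ factor and miss the stated bound.
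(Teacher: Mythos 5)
Your proposal is correct and follows essentially the same route as the paper's proof in \Cref{sec:oco-adversarial}: dropping the queue-coupled term via $g_t(\bm{x^\circ})\le 0$, bounding the three regret terms with Lemma~\ref{lem:sum1}, bounding $Q_t$ pointwise via Lemma~\ref{lem:q} with the Cauchy--Schwarz step $\sum_t S_t^\beta F_t\le 2\sqrt{T}S_T^{\beta+1/2}$, and then feeding that pointwise queue bound term-by-term into $\sum_t \frac{G_t^2}{2\alpha_t}Q_t$ (exactly the paper's inequality~\eqref{ineq:q-adpp} and the subsequent display), rather than pulling out $\max_t Q_t$. The only deviations are immaterial constants and your remark about the missing $\delta$-regularizer, which the paper handles implicitly in the same spirit.
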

\begin{proof}
  Applying Lemma~\ref{lem:regret} with $\hat{f}_t=f_t$, $\hat{g}_t=g_t$, and $\bmx=\bm{x^\circ}$, we obtain
    $$\operatorname{Regret}(T)\leq
    \frac{\alpha_T}{V_T}R^2+\sum_{t=1}^T\frac{V_t F_t^2}{4\alpha_t}+\frac{1}{2}\sum_{t=1}^T\frac{(H_t+G_t R)^2}{V_t}.$$
    The first term of the right hand side is $RS_{T}^{1-\beta}=O(S_T^{1-\beta})$, and
    the second term satisfies
    $$\sum_{t=1}^T\frac{V_t F_t^2}{4\alpha_t}=\frac{R}{4}\sum_{t=1}^T S_{t}^{\beta-1}F_t^2\leq R\sum_{t=1}^T S_{t}^{\beta-1}a_t\leq \frac{R}{\beta}S_T^\beta= O\left(S_T^{\beta}\right),$$
    where the last inequality follows from Lemma \ref{lem:sum1}.
    The third term satisfies
    $$\sum_{t=1}^T\frac{(H_t+G_t R)^2}{2V_t}\leq
    R\sum_{t=1}^T\frac{R^2G_t^2+H_t^2}{S_{t}^\beta}\leq
    R\sum_{t=1}^T\frac{a_t}{S_{t}^\beta}\leq \frac{R}{1-\beta}S_T^{1-\beta}=O\left(S_T^{1-\beta}\right).$$
    Combining these two inequalities, we get
    \begin{equation*}
    \operatorname{Regret} (T)= O\left(S_T^{1-\beta}\right).
    \end{equation*}
    Next, we prove the second part of the theorem.
    By Lemma \ref{lem:constraint}, we have
    $$\sum_{t=1}^T g_t(\bmx_t)\leq Q_{T+1}+\sum_{t=1}^T\frac{G_t}{2\alpha_t}(V_t F_t+Q_t G_t).$$
    If we apply Lemma \ref{lem:q} with $\bmx=\bm{x^\circ}$, we obtain
    \begin{align*}
        Q_{T+1}&\leq \sqrt{2\sum_{t=1}^T\big(H_t^2+R^2 G_t^2+V_t RF_t\big)+2R^2\alpha_T}\\
        &\leq \underbrace{\sqrt{2\sum_{t=1}^T RV_t F_t}}_{(a)}+\underbrace{\sqrt{2\sum_{t=1}^T (R^2G_t^2+H_t^2)}}_{(b)}+\underbrace{\sqrt{2R^2\alpha_T}}_{(c)}.
    \end{align*}
    where the first inequality follows from the convexity of $g_t$ and $g_t(\bm{x^\circ})\leq 0$.
    Here, term $(c)$ is equal to $\sqrt{2S_{T}}$, and term  $(b)$ is less than or equal to $\sqrt{2S_{T}}$. For term $(a)$, we have
    $$(a)^2= 2\sum_{t=1}^T S_{t}^\beta F_t\leq2 S_T^\beta\sum_{t=1}^T F_t\leq 2 S_T^\beta\sqrt{T\sum_{t=1}^T F_t^2}\leq 4\sqrt{T}S_T^{\beta+1/2},$$
    where the second inequality holds by the power mean inequality.
    Thus, 
    \begin{equation}\label{ineq:q-adpp}
        Q_{T+1}\leq (a)+(b)+(c)\leq 2\sqrt{2}S_T^{1/2}+2T^{1/4}S_T^{\beta/2+1/4}=O\left(S_T^{1/2}+T^{1/4}S_T^{\beta/2+1/4}\right).
    \end{equation}
    We also have that
    \begin{align*}
    \sum_{t=1}^T\frac{V_t F_t G_t}{2\alpha_t}&=\sum_{t=1}^T S_{t}^{\beta-1}RF_t G_t/2\\
    &\leq \sum_{t=1}^T S_{t}^{\beta-1}(F_t^2/4+R^2 G_t^2)/2\\
    &\leq \sum_{t=1}^T S_{t}^{\beta-1}a_t/2\leq \frac{S_T^\beta}{2\beta}\\
    &=O\left(S_T^\beta\right),
    \end{align*}
    where the last inequality follows from Lemma \ref{lem:sum1}.
    Lastly,
    \begin{align*}
    \sum_{t=1}^T \frac{G_t^2}{2\alpha_t}Q_t&\leq \sum_{t=1}^T \sqrt{2}\frac{G_t^2}{\alpha_t} S_{t}^{1/2}+\sum_{t=1}^T \frac{G_t^2}{\alpha_t}T^{1/4}S_{t}^{\beta/2+1/4}\\
    &\leq \sqrt{2}\sum_{t=1}^T \frac{a_t}{S_t^{1/2}} +T^{1/4}\sum_{t=1}^T \frac{a_t}{S_t^{3/4-\beta/2}}\\
    &\leq 2\sqrt{2S_T}+\frac{T^{1/4}}{\beta/2+1/4}S_T^{\beta/2+1/4}\\
    &=O\left(S_T^{1/2}+T^{1/4}S_T^{\beta/2+1/4}\right),
    \end{align*}
    where the first inequality follows from (\ref{ineq:q-adpp}) and the last inequality follows from Lemma \ref{lem:sum1}.
    Combining the results, we get
    $$\operatorname{Violation}(T)= O\left(S_T^{1/2}+T^{1/4}S_T^{\beta/2+1/4}\right),$$
    as required.
\end{proof}

\section{Proof of the Time-Varying Drift Lemma}\label{sec:appendix-drift}

In this section, we prove Lemma~\ref{lem:process} for the case of time-varying parameter $\theta_{t_0}(t)$.
We closely follow the proof of \citep[Lemma 5]{OCO-stochastic}. 
\begin{lemma}\label{lem:process'} Let $r = {\zeta}/(4t_{0}\delta_{\max}^{2})$ and $\rho = 1- {\zeta^{2}}/(8\delta_{\max}^{2}) = 1 - {rt_{0}\zeta}/{2}$. Then
\begin{align*}
\mathbb{E}\left[e^{rZ(t)}\right] \leq \frac{e^{rt_{0}\delta_{\max}}}{1-\rho} e^{r\theta(t)}
\end{align*}
for all $t\geq 0$.
\end{lemma}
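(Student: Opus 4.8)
The plan is to follow the standard drift-lemma argument of Hajek, as adapted in \citep[Lemma 5]{OCO-stochastic}, but carefully tracking the fact that the threshold $\theta(t)$ now varies with $t$ while the window length $t_0$, the step bound $\delta_{\max}$, and the drift $\zeta$ are all fixed. The key observation that makes the time-varying version go through is precisely what the remark after Lemma~\ref{lem:process} emphasizes: $t_0$ does not depend on $t$, so the one-step-type recursion can be iterated. Concretely, I would define $Y_t = Z_t - \theta(t)$ (or work directly with $Z_t$ and just carry $\theta(t)$ along), fix $r = \zeta/(4t_0\delta_{\max}^2)$ and $\rho = 1 - rt_0\zeta/2 = 1 - \zeta^2/(8\delta_{\max}^2)$, and aim to show the stated bound on $\mathbb{E}[e^{rZ(t)}]$ by induction on $t$ in steps of $1$.

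The heart of the argument is a conditional super/sub-martingale-type estimate over a window of length $t_0$: I would show that
\begin{align*}
\mathbb{E}\left[e^{r Z_{t+t_0}} \mid \mathcal{W}_t\right] \leq \rho\, e^{r Z_t} + e^{r t_0 \delta_{\max}} e^{r\theta(t)}.
\end{align*}
To get this, split into the two cases of the drift hypothesis. When $Z_t \geq \theta(t)$: write $e^{rZ_{t+t_0}} = e^{rZ_t} e^{r(Z_{t+t_0}-Z_t)}$, use $|Z_{t+t_0}-Z_t|\leq t_0\delta_{\max}$ together with the inequality $e^x \leq 1 + x + \tfrac{x^2}{2}e^{|x|}$ (or the cleaner $e^x \le 1 + x + x^2$ valid for $|x|$ small enough given our choice of $r$) applied to $x = r(Z_{t+t_0}-Z_t)$, then take conditional expectations using $\mathbb{E}[Z_{t+t_0}-Z_t\mid\mathcal{W}_t]\leq -t_0\zeta$ and the crude bound $(Z_{t+t_0}-Z_t)^2 \leq t_0^2\delta_{\max}^2$; the choice of $r$ is exactly calibrated so that $1 + r(-t_0\zeta) + (rt_0\delta_{\max})^2 \leq \rho$. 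When $Z_t < \theta(t)$: simply bound $e^{rZ_{t+t_0}} \leq e^{r(Z_t + t_0\delta_{\max})} \leq e^{r\theta(t)} e^{rt_0\delta_{\max}}$ deterministically, which is absorbed into the second term. Combining the two cases (noting the first-case bound is $\leq \rho e^{rZ_t}$ and hence certainly $\leq \rho e^{rZ_t} + e^{rt_0\delta_{\max}}e^{r\theta(t)}$) yields the window inequality.

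Given the window inequality, I would close the induction. For $t \le t_0$ the bound is immediate from $Z_t \le t_0\delta_{\max}$ and $\theta(t)>0$, since $e^{rZ_t} \le e^{rt_0\delta_{\max}} \le \frac{e^{rt_0\delta_{\max}}}{1-\rho}e^{r\theta(t)}$ using $1-\rho \le 1$ and $\theta(t)>0$. For $t > t_0$, apply the window inequality with $t$ replaced by $t - t_0$, take total expectations, and use the inductive hypothesis at $t - t_0$ together with monotonicity of $\theta$ (so $e^{r\theta(t-t_0)} \le e^{r\theta(t)}$):
\begin{align*}
\mathbb{E}\left[e^{rZ_t}\right] \leq \rho\, \mathbb{E}\left[e^{rZ_{t-t_0}}\right] + e^{rt_0\delta_{\max}} e^{r\theta(t-t_0)} \leq \rho\,\frac{e^{rt_0\delta_{\max}}}{1-\rho}e^{r\theta(t)} + e^{rt_0\delta_{\max}}e^{r\theta(t)} = \frac{e^{rt_0\delta_{\max}}}{1-\rho}e^{r\theta(t)},
\end{align*}
which is exactly the claim. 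The main obstacle is twofold: first, verifying that the numerical choice of $r$ indeed forces $1 - rt_0\zeta + r^2t_0^2\delta_{\max}^2 \le \rho = 1 - rt_0\zeta/2$, i.e. $rt_0\delta_{\max}^2 \le \zeta/2$, which holds with equality by the definition $r = \zeta/(4t_0\delta_{\max}^2)$ — so this is just bookkeeping but must be done with the right elementary exponential bound so that the quadratic remainder is controlled; second, making sure the monotonicity of $\theta$ is the only structural property of $\theta$ used, so that the fixed value of $t_0$ genuinely lets the recursion run across all $t$. Since $\theta(t)$ only appears on the right side and is non-decreasing, there is no circularity, and the argument is a faithful time-varying generalization of \citep[Lemma 5]{OCO-stochastic}.
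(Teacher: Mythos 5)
Your proposal is correct and follows essentially the same route as the paper's proof: the same Hajek-style exponential moment bound with the identical choice of $r$ and $\rho$, the same case split on $Z_t \gtrless \theta(t)$ leading to the window inequality $\mathbb{E}[e^{rZ_{t+t_0}}] \leq \rho\,\mathbb{E}[e^{rZ_t}] + e^{rt_0\delta_{\max}}e^{r\theta(t)}$, and the same induction (base case $t\leq t_0$ via $Z_t\leq t_0\delta_{\max}$, step via the hypothesis at $t-t_0$ and monotonicity of $\theta$). The only nit is bookkeeping: with $r=\zeta/(4t_0\delta_{\max}^2)$ one has $rt_0\delta_{\max}^2=\zeta/4$, so your calibration condition $rt_0\delta_{\max}^2\leq\zeta/2$ holds with slack rather than equality, which of course does not affect the argument.
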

\begin{proof}
Since $0<\zeta <\delta_{\max}$, we have $0<\rho<1 < e^{r\delta_{\max}}$. Define $\eta(t) = Z(t+t_{0}) - Z(t)$. Note that $|\eta(t)| \leq t_{0}\delta_{\max}$ for all $t\geq 0$ which implies that $|r\eta(t) | \leq  {\zeta}/(4\delta_{\max}) \leq 1$. Then, 
\begin{align}\label{eq:pf-lm-random-process-bound-eq1}
e^{rZ(t+t_{0})} = e^{rZ(t)} e^{r\eta(t)} 
\leq e^{rZ(t)} \left[1 +r\eta(t) + 2r^2 t_{0}^{2}\delta_{\max}^2\right] = e^{rZ(t)} \left[1 +r\eta(t) + \frac{1}{2}r t_{0}\zeta\right]
\end{align}
where the inequality follows from the fact that $e^x\leq 1+x+2x^2$ for $|x|\leq 1$, $|r\eta(t)|\leq 1$, and $|\eta(t)|\leq t_{0}\delta_{\max}$ while  the equality follows by substituting $r = {\zeta}/(4t_{0}\delta_{\max}^{2})$.

Next, we consider the cases $Z(t)\geq \theta(t) $ and $Z(t)<\theta(t)$, separately. First, consider the case $Z(t)\geq \theta(t)$. Taking the conditional expectation of each side  of \eqref{eq:pf-lm-random-process-bound-eq1} gives us the following.
\begin{align*}
\mathbb{E}\left[e^{rZ(t+t_{0})} \mid Z(t)\right] &\leq \mathbb{E} \left[e^{rZ(t)} (1 +r\eta(t) + \frac{1}{2}rt_{0}\zeta) \mid Z(t)\right] \\
&\leq e^{rZ(t)} \left[ 1 -rt_{0}\zeta + \frac{1}{2}rt_{0}\zeta\right]\\
&= e^{rZ(t)} \left[ 1- \frac{rt_{0}\zeta}{2}\right]\\
&= \rho e^{rZ(t)}
\end{align*}
where the inequality follows from the fact that $\mathbb{E}[Z(t+t_{0}) - Z(t)|\mathcal{F}(t)] \leq -t_{0}\zeta$ when $Z(t) \geq \theta(t)$ while the second equality follows from the fact that $\rho = 1 - {rt_{0}\zeta}/{2}$. Likewise, for the case $Z(t)<\theta(t)$, we deduce that
\begin{align*}
\mathbb{E}\left[e^{rZ(t+t_{0})} \mid Z(t) \right] = \mathbb{E}\left[e^{rZ(t)}e^{r\eta(t)} \mid Z(t)\right] = e^{rZ(t)} \mathbb{E}\left[e^{r\eta(t)}\mid Z(t)\right]\leq e^{rt_{0}\delta_{\max}}e^{rZ(t)}, 
\end{align*}
where the inequality follows from the fact that $\eta(t) \leq t_{0}\delta_{\max}$.

Putting the two cases together, we deduce that
\begin{align*}
&\mathbb{E}\left[e^{rZ(t+t_{0})}\right]\\
&=\mathbb{P}(Z(t) \geq \theta(t)) \mathbb{E}\left[e^{rZ(t+t_{0})} \mid Z(t) \geq \theta(t)\right] + \mathbb{P}(Z(t) < \theta(t)) \mathbb{E}\left[e^{rZ(t+t_{0})} \mid Z(t) < \theta(t)\right] \nonumber\\
&\leq \rho \mathbb{E}\left[e^{rZ(t)}\mid Z(t) \geq \theta(t)\right] \mathbb{P}(Z(t) \geq \theta(t)) + e^{rt_{0}\delta_{\max}} \mathbb{E}\left[e^{rZ(t)}\mid Z(t) < \theta(t)\right] \mathbb{P}(Z(t) < \theta(t)) \nonumber\\\
&= \rho \mathbb{E}\left[e^{rZ(t)}\right] + \left(e^{r t_{0}\delta_{\max}} - \rho\right) \mathbb{E}\left[e^{rZ(t)}\mid Z(t) < \theta(t)\right] \mathbb{P}(Z(t) < \theta(t)) \nonumber\\\
&\leq \rho \mathbb{E}\left[e^{rZ(t)}\right] + \left(e^{rt_{0}\delta_{\max}} - \rho\right) e^{r\theta(t)}\nonumber\\
&\leq \rho \mathbb{E}\left[e^{rZ(t)}\right] + e^{rt_{0}\delta_{\max}} e^{r\theta(t)}
\end{align*}
where the first inequality follows from the analysis of the two separate cases and the second inequality follows from the fact that $e^{rt_{0}\delta_{\max}}> \rho$.  

Then we argue by induction to prove the statement of this lemma. 
We first consider the base case $t\in\{0,1,\ldots,t_0\}$. Since $Z(t) \leq t \delta_{\max}$ for all $t \geq  0$, it follows that $$\mathbb{E}[e^{rZ(t)}] \leq e^{rt\delta_{\max}} \leq e^{r t_{0} \delta_{\max}} \leq \frac{e^{r t_{0}\delta_{\max}}}{1-\rho} e^{r\theta(t)}$$ for all  $t\in\{1,\ldots, t_{0}\}$, where the last inequality follows because ${e^{r\theta(t)}}/({1-\rho}) \geq 1$. Next we assume that the inequality holds for all $t\in\{0,1,\ldots,s\}$ with some $s\geq t_0$ and consider iteration $t=s+1$.Note that
\begin{align*}
\mathbb{E}\left[e^{rZ(s+1)}\right]&\leq \rho \mathbb{E}\left[e^{rZ(s+1-t_0)}\right]+e^{rt_0\delta_{\max}}e^{r\theta(s+1-t_0)}\\
    &\leq \rho\frac{e^{rt_0\delta_{\max}}}{1-\rho}e^{r\theta(s+1-t_0)}+e^{rt_0\delta_{\max}}e^{r\theta(s+1-t_0)}\\
    &\leq \frac{e^{rt_0\delta_{\max}}}{1-\rho}e^{r\theta(s+1-t_0)}\\
    &\leq\frac{e^{rt_0\delta_{\max}}}{1-\rho}e^{r\theta(s+1)}
\end{align*}
where the second inequality comes from the induction hypothesis by noting that $0\leq \tau+1-t_0\leq \tau$ while the last inequality follows from the fact that $\theta(t)$ is non-decreasing.
\end{proof}
Based on this lemma, we prove Lemma~\ref{lem:process}.
\begin{proof}[Proof of Lemma \ref{lem:process}]
Note that $e^{rx}$ is convex in $x$ when $r>0$. By Jensen's inequality,
\begin{align*}
e^{r\mathbb{E}[Z(t)]} \leq \mathbb{E}[e^{rZ(t)}] \leq\frac{e^{r(\theta(t)+t_{0}\delta_{\max})}}{1-\rho}
\end{align*}
where the inequality is implied by Lemma~\ref{lem:process'}. Taking logarithm on both sides and dividing by $r$ yields that
\begin{align*}
\mathbb{E}[Z(t)]&\leq \theta(t) + t_0\delta_{\max} + \frac{1}{r} \log\big[\frac{1}{1-\rho}\big] =\theta(t) + t_0\delta_{\max} + t_{0}\frac{4\delta_{\max}^{2}}{\zeta} \log\big[\frac{8\delta_{\max}^{2}}{\zeta^{2}}\big],
\end{align*}
where the equality holds because recalling that $r = \frac{\zeta}{4t_{0}\delta_{\max}^{2}}$ and $\rho = 1- \frac{\zeta^{2}}{8\delta_{\max}^{2}}$.
\end{proof}

\section{Properties of the MLMC Estimator}\label{sec:appendix-mlmc}

In this section, we prove Lemmas~\ref{lem:MLMC} and~\ref{lem:concentration}.

\begin{lemma}\citep[Lemma A.6]{Levy22}\label{lem:levy0}
    Let $h:\mathcal{X}\times\mathcal{S}\to \mathbb{R}^k$ for some $k\geq 1$. Suppose that there exists some constant $L>0$ such that $\|h(\bmx,\bmxi)\|\leq L$ for every $(\bmx,\bmxi)\in\mathcal{X}\times\mathcal{S}$, where the norm $\|\cdot\|$ satisfies $\|\cdot \| \leq \eta \|\cdot\|_2$ for some $\eta>0$. We denote by $$\bar{h}(\bmx):=\mathbb{E}_{\bmxi\sim \mu}[h(\bmx,\bmxi)],\quad  h_t^N(\bmx):=\frac{1}{N}\sum_{i=1}^N h(\bmx, \bmxi_t^{(i)}).$$ Suppose that $\bmx$ is $\mathcal{F}_{t-1}$ measurable and $N\leq A$ for some $A\in \mathbb{N}$. If $2\tmix\lceil2\log A\rceil\leq N$, then
    \begin{align*}
        \mathbb{E}_{t-1}\left[\lVert h_t^N(\bmx)-\bar{h}(\bmx)\rVert\right]&\leq12L\eta\sqrt{\frac{\tmix\lceil 2\log A\rceil}{N}}\left(1+\sqrt{\log(\tmix\lceil 2\log A\rceil N)}\right)\\
        &\quad +\frac{6L\eta\tmix\lceil2\log A\rceil}{N}+\frac{4L\eta}{N},\\
        \mathbb{E}_{t-1}\left[\lVert h_t^N(\bmx)-\bar{h}(\bmx)\rVert^2\right]&\leq\frac{576L^2\eta^2\tmix\lceil2\log A\rceil}{N}(1+\log(\tmix\lceil 2\log A\rceil N))\\
        &\quad +\frac{72L^2\eta^2\tmix^2\lceil 2\log A\rceil^2}{N^2}+\frac{8L^2\eta^2}{N}.
    \end{align*}
\end{lemma}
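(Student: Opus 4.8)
The plan is to bound the first and second moments of the centered average by splitting it into a deterministic bias and a random fluctuation,
$$h_t^N(\bmx)-\bar h(\bmx)=\big(\mathbb{E}_{t-1}[h_t^N(\bmx)]-\bar h(\bmx)\big)+\big(h_t^N(\bmx)-\mathbb{E}_{t-1}[h_t^N(\bmx)]\big),$$
treating each part on its own and recombining via the triangle inequality together with $(a+b)^2\le 2a^2+2b^2$. The bias accounts for the additive $\tmix/N$ and $1/N$ terms, while the fluctuation produces the dominant $\sqrt{\tmix\lceil2\log A\rceil/N}$ and $\tmix\lceil2\log A\rceil/N$ terms.

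First I would dispose of the bias. Conditionally on $\mathcal{F}_{t-1}$ the sample $\bmxi_t^{(i)}$ is the state reached $i$ steps after the last state recorded in $\mathcal{F}_{t-1}$, so its conditional law satisfies $\lVert\mathbb{P}_{[t-1]}^{(i)}-\mu\rVert_{TV}\le d_{\text{mix}}(i)$. Writing $\mathbb{E}_{t-1}[h(\bmx,\bmxi_t^{(i)})]-\bar h(\bmx)=\mathbb{E}[h(\bmx,\bmxi)-h(\bmx,\bmxi')]$ under a maximal coupling of $\mathbb{P}_{[t-1]}^{(i)}$ and $\mu$ and using $\lVert h\rVert\le L$ gives a per-term bias of at most $2L\,d_{\text{mix}}(i)$ directly in the norm $\lVert\cdot\rVert$. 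Summing over $i$ and invoking the geometric decay $d_{\text{mix}}(\ell\tmix)\le2^{-\ell}$ yields $\sum_{i\ge1}d_{\text{mix}}(i)=O(\tmix)$, so the full bias is $O(L\tmix/N)$, which absorbs into the $\tfrac{6L\eta\tmix\lceil2\log A\rceil}{N}+\tfrac{4L\eta}{N}$ (respectively squared) terms.

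The heart of the argument is the fluctuation term, for which I would use the independent-block (Bernstein-block) method. Partition the $N$ consecutive indices into consecutive blocks of length $K=\tmix\lceil2\log A\rceil$ and split them into two groups so that, within each group, distinct blocks are separated by a gap of at least $K$; the hypothesis $2\tmix\lceil2\log A\rceil\le N$ guarantees the blocking is well defined. Because the separation $K$ is the integer multiple $\lceil2\log A\rceil$ of $\tmix$, the mixing bound gives a per-gap coupling error $d_{\text{mix}}(K)\le2^{-\lceil2\log A\rceil}$, so by applying a maximal coupling of the chain block by block I can replace the blocks of each group by genuinely independent block sums, paying a total coupling cost of at most (number of blocks)$\times2^{-\lceil2\log A\rceil}$, which is $O(L\eta/N)$ since $N\le A$. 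On the coupled independent block sums — each of range $O(LK)$ in $\ell_2$ after applying $\lVert\cdot\rVert\le\eta\lVert\cdot\rVert_2$, and $O(N/K)$ of them — I would invoke a vector-valued bounded-difference (Hoeffding/Azuma–Bernstein) concentration inequality, producing a tail with variance proxy of order $L^2\eta^2K/N=L^2\eta^2\tmix\lceil2\log A\rceil/N$. Integrating this tail yields the first-moment factor $\sqrt{\tmix\lceil2\log A\rceil/N}\,(1+\sqrt{\log(\cdots)})$, and the corresponding second-moment bound gives the $\tfrac{L^2\eta^2\tmix\lceil2\log A\rceil}{N}(1+\log(\cdots))$ term, the logarithmic factors arising from the tail integration and from the Bernstein-type logarithmic term in the concentration inequality.

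I expect the coupling step to be the main obstacle: one must make precise the replacement of dependent blocks by independent ones and verify that the accumulated coupling error, together with the within-block boundary effects, stays of the claimed $O(L\eta/N)$ and $O(L^2\eta^2/N)$ orders — this is also where the explicit constants $576$, $72$, and $8$ get pinned down. Once the two group contributions are combined through the triangle inequality and $(a+b)^2\le2a^2+2b^2$ and added to the bias estimate, the remaining manipulations (collecting the $\sqrt{\cdot}$, $1/N$, and $1/N^2$ terms and bounding the logarithms uniformly) are routine bookkeeping.
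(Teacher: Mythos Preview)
The paper does not contain a proof of this lemma: it is quoted verbatim from \cite[Lemma~A.6]{Levy22} and used as a black box to derive the simplified Lemma~\ref{lem:levy} and then Lemma~\ref{lem:concentration}. So there is no in-paper argument to compare your proposal against.

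That said, your sketch follows the standard route one would expect for such a result --- bias/fluctuation split, mixing-time bound on the bias, and a Bernstein-block (independent-block) coupling for the fluctuation followed by a bounded-differences concentration inequality on the block sums. This is almost certainly the structure of the proof in \cite{Levy22} as well. Your identification of the coupling step as the place where the constants $576$, $72$, $8$ get pinned down is accurate; that bookkeeping, together with handling the leftover indices when $N$ is not an exact multiple of the block length, is where most of the care is needed. Nothing in your outline looks wrong, but note that if your goal is only to reproduce the present paper's results, you can simply cite \cite{Levy22} as the authors do.
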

We point out that \citep[Lemma A.6]{Levy22} was originally stated for the $\ell_2$ norm, but the statement holds for any norm over a finite-dimensional vector space assuming that $\|\cdot \| \leq \eta \|\cdot\|_2$ for some fixed constant $\eta>0$ and $O$ hides the dependence on $\eta$. In the following lemma, we simplify the upper bounds of Lemma~\ref{lem:levy0}.
\begin{lemma}\label{lem:levy}
    Let $h:\mathcal{X}\times\mathcal{S}\to \mathbb{R}^k$ for some $k\geq 1$. Suppose that there exists some constant $L>0$ such that $\|h(\bmx,\bmxi)\|\leq L$ for every $(\bmx,\bmxi)\in\mathcal{X}\times\mathcal{S}$, where the norm $\|\cdot\|$ satisfies $\|\cdot \| \leq \eta \|\cdot\|_2$ for some $\eta>0$. We denote by $$\bar{h}(\bmx):=\mathbb{E}_{\bmxi\sim \mu}[h(\bmx,\bmxi)],\quad  h_t^N(\bmx):=\frac{1}{N}\sum_{i=1}^N h(\bmx, \bmxi_t^{(i)}).$$ Suppose that $\bmx$ is $\mathcal{F}_{t-1}$ measurable and $N\leq A$ for some $A\in \mathbb{N}$. Then
    \begin{align*}
        \mathbb{E}_{t-1}\left[\lVert h_t^N(\bmx)-\bar{h}(\bmx)\rVert\right]&\leq12L\max(1,\eta)\sqrt{\frac{\tmix\lceil 2\log A\rceil}{N}}\left(2+\sqrt{\log(\tmix\lceil 2\log A\rceil N)}\right),\\
        \mathbb{E}_{t-1}\left[\lVert h_t^N(\bmx)-\bar{h}(\bmx)\rVert^2\right]&\leq\frac{576L^2\max(1,\eta)^2\tmix\lceil2\log A\rceil}{N}(2+\log(\tmix\lceil 2\log A\rceil N)).\\
    \end{align*}
\end{lemma}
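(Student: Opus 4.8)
The plan is to derive Lemma~\ref{lem:levy} directly from Lemma~\ref{lem:levy0} by bounding each of the three summands on the right-hand side of Lemma~\ref{lem:levy0} by a single clean term. Set $M := \tmix\lceil 2\log A\rceil$ for brevity, so that the hypothesis $2\tmix\lceil 2\log A\rceil \le N$ reads $2M \le N$, i.e. $M/N \le 1/2$. The strategy is: every term that appears should be absorbed into $L\max(1,\eta)\sqrt{M/N}\,(\text{something})$ for the first-moment bound, and into $L^2\max(1,\eta)^2 (M/N)(\text{something})$ for the second-moment bound. Replacing $\eta$ and $\eta^2$ throughout by $\max(1,\eta)$ and $\max(1,\eta)^2$ respectively is harmless since the original bounds only grow, and it lets us also absorb the stray numeric constants (the $+\tfrac{4L\eta}{N}$ type terms, which have no $\eta$ factor, are dominated once we allow $\max(1,\eta)\ge 1$).

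For the first-moment inequality I would argue as follows. The first summand in Lemma~\ref{lem:levy0} is $12L\eta\sqrt{M/N}\bigl(1+\sqrt{\log(MN)}\bigr)$, which is at most $12L\max(1,\eta)\sqrt{M/N}\bigl(1+\sqrt{\log(MN)}\bigr)$. For the second summand, $\tfrac{6L\eta M}{N} = 6L\eta\sqrt{M/N}\cdot\sqrt{M/N} \le 6L\max(1,\eta)\sqrt{M/N}$ because $\sqrt{M/N}\le 1/\sqrt2 < 1$. For the third summand, $\tfrac{4L\eta}{N} \le \tfrac{4L\eta M}{N}\le 4L\max(1,\eta)\sqrt{M/N}$ since $M\ge 1$ and again $\sqrt{M/N}<1$. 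Adding the three bounds gives $L\max(1,\eta)\sqrt{M/N}\bigl(12 + 12\sqrt{\log(MN)} + 6 + 4\bigr) = L\max(1,\eta)\sqrt{M/N}\bigl(22 + 12\sqrt{\log(MN)}\bigr)$, and since $22 \le 12\cdot 2 = 24$ this is $\le 12L\max(1,\eta)\sqrt{M/N}\bigl(2 + \sqrt{\log(MN)}\bigr)$, which is exactly the claimed first bound.

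For the second-moment inequality the bookkeeping is the same. The first summand $\tfrac{576L^2\eta^2 M}{N}(1+\log(MN))$ is at most $576 L^2\max(1,\eta)^2 (M/N)(1+\log(MN))$. The second summand $\tfrac{72 L^2\eta^2 M^2}{N^2} = 72L^2\eta^2(M/N)^2 \le 36 L^2\max(1,\eta)^2(M/N)$ using $(M/N)^2 \le (M/N)\cdot\tfrac12$. The third summand $\tfrac{8L^2\eta^2}{N}\le \tfrac{8L^2\eta^2 M}{N}\le 8L^2\max(1,\eta)^2(M/N)$ since $M\ge1$. Summing, the total is at most $L^2\max(1,\eta)^2(M/N)\bigl(576 + 576\log(MN) + 36 + 8\bigr) = L^2\max(1,\eta)^2(M/N)\bigl(620 + 576\log(MN)\bigr)$, and since $620 \le 576\cdot 2$ this is $\le 576 L^2\max(1,\eta)^2(M/N)\bigl(2 + \log(MN)\bigr)$, matching the stated bound. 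Substituting back $M=\tmix\lceil 2\log A\rceil$ finishes both cases.

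There is essentially no hard step here — the lemma is purely a cosmetic repackaging of Lemma~\ref{lem:levy0} designed to make later applications (with $A = T^2$, whence $\lceil 2\log A\rceil = O(\log T)$) cleaner. The only thing to be careful about is verifying the sign/monotonicity facts used: $\sqrt{M/N}\le 1$ and $(M/N)\le 1/2$ hold because of the hypothesis $2M\le N$; $M\ge 1$ holds because $\tmix\ge 1$ and $\lceil 2\log A\rceil\ge 1$; and the additive numeric constants are absorbed by slightly enlarging the coefficient of the logarithmic term, which is legitimate since $\log(MN)\ge 0$. If one wanted to be fully rigorous about the ``$O$ hides $\eta$'' remark following Lemma~\ref{lem:levy0}, one could instead just invoke Lemma~\ref{lem:levy0} verbatim with its explicit constants, which is what the computation above does.
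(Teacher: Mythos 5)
Your arithmetic for absorbing the lower-order terms of Lemma~\ref{lem:levy0} is fine, but your proof has a genuine gap: you treat $2\tmix\lceil 2\log A\rceil\leq N$ as a standing hypothesis ("the hypothesis $2M\le N$"), when Lemma~\ref{lem:levy} deliberately drops that condition --- its statement only assumes $N\leq A$. Removing the condition is the entire point of the lemma: in the proof of Lemma~\ref{lem:MLMC} it is invoked with $N=2^j$ and $N=2^{j-1}$ for \emph{all} levels $j$, including $N=1$, where $N<2\tmix\lceil 2\log A\rceil$ typically holds and Lemma~\ref{lem:levy0} is simply inapplicable. So your argument only establishes the conclusion on the regime $2\tmix\lceil 2\log A\rceil\leq N$, and every step that uses $M/N\leq 1/2$ (e.g.\ $\sqrt{M/N}<1$ and $(M/N)^2\leq \tfrac12 (M/N)$) is unavailable in the complementary regime.

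The missing case is easy but must be argued separately, and the paper does exactly this: when $2\tmix\lceil 2\log A\rceil>N$, use the trivial bounds $\lVert h_t^N(\bmx)-\bar h(\bmx)\rVert\leq 2L$ and $\lVert h_t^N(\bmx)-\bar h(\bmx)\rVert^2\leq 4L^2$ (triangle inequality plus $\lVert h\rVert\leq L$), and then note that $2\tmix\lceil 2\log A\rceil/N>1$, so $2L<2L\sqrt{2\tmix\lceil 2\log A\rceil/N}$ and $4L^2<8L^2\tmix\lceil 2\log A\rceil/N$, which are dominated by the claimed right-hand sides. Note also that this is precisely where the factor $\max(1,\eta)$ (rather than plain $\eta$) is needed: the trivial bound $2L$ carries no $\eta$, so for small $\eta$ the claimed bound could not absorb it without the $\max$; in the regime you did cover, plain $\eta$ would have sufficed, which is a sign the case split was overlooked. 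With that second case added, your computation coincides with the paper's proof.
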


\begin{proof}
    First, we consider the case where $2\tmix\lceil2\log A\rceil> N$. Then by the triangle inequality, 
    \begin{align*}
        \mathbb{E}_{t-1}\left[\lVert h_t^N(\bmx)-\bar{h}(\bmx)\rVert\right]&\leq 2L< 2L\sqrt{\frac{2\tmix\lceil2 \log A\rceil}{N}}\\
        &\leq12L\max(1,\eta)\sqrt{\frac{\tmix\lceil 2\log A\rceil}{N}}\left(2+\sqrt{\log(\tmix\lceil 2\log A\rceil N)}\right),\\
        \mathbb{E}_{t-1}\left[\lVert h_t^N(\bmx)-\bar{h}(\bmx)\rVert^2\right]&\leq 4L^2 < 4L^2\frac{2\tmix\lceil 2\log A\rceil}{N}\\
        &\leq \frac{576L^2\max(1,\eta)^2\tmix\lceil2\log A\rceil}{N}(2+\log(\tmix\lceil 2\log A\rceil N)).
    \end{align*}
    Next, we consider the case where $2\tmix\lceil2\log A\rceil\leq N$. By Lemma \ref{lem:levy0},
    \begin{align*}
        \mathbb{E}_{t-1}\left[\lVert h_t^N(\bmx)-\bar{h}(\bmx)\rVert\right]&\leq12L\eta\sqrt{\frac{\tmix\lceil 2\log A\rceil}{N}}\left(1+\sqrt{\log(\tmix\lceil 2\log A\rceil N)}\right)\\
        &\quad +\frac{6L\eta\tmix\lceil2\log A\rceil}{N}+\frac{4L\eta}{N},\\
        \mathbb{E}_{t-1}\left[\lVert h_t^N(\bmx)-\bar{h}(\bmx)\rVert^2\right]&\leq\frac{576L^2\eta^2\tmix\lceil2\log A\rceil}{N}(1+\log(\tmix\lceil 2\log A\rceil N))\\
        &\quad +\frac{72L^2\eta^2\tmix^2\lceil 2\log A\rceil^2}{N^2}+\frac{8L^2\eta^2}{N}.
    \end{align*}
    Since ${\tmix\lceil2\log A\rceil}/{N}\leq \frac{1}{2}$, we deduce that
    \begin{align*}
        \frac{6L\eta\tmix\lceil2\log A\rceil}{N}+\frac{4L\eta}{N}\leq \frac{12L\eta\tmix\lceil2\log A\rceil}{N}\leq 12L\eta\sqrt{\frac{\tmix\lceil2\log A\rceil}{N}}
    \end{align*}
    and that
    \begin{align*}\frac{72L^2\eta^2\tmix^2\lceil 2\log A\rceil^2}{N^2}+\frac{8L^2\eta^2}{N}&\leq \frac{72L^2\eta^2\tmix\lceil 2\log A\rceil}{N}+\frac{8L^2\eta^2\tmix\lceil2\log A\rceil}{N}\\
    &\leq \frac{576L^2\eta^2\tmix\lceil2\log A\rceil}{N}.
    \end{align*}
As a result, we obtain
    \begin{align*}
        \mathbb{E}_{t-1}\left[\lVert h_t^N(\bmx)-\bar{h}(\bmx)\rVert\right]&\leq 12L\eta\sqrt{\frac{\tmix\lceil 2\log A\rceil}{N}}\left(2+\sqrt{\log(\tmix\lceil 2\log A\rceil N)}\right)\\
        &\leq 12L\max(1,\eta)\sqrt{\frac{\tmix\lceil 2\log A\rceil}{N}}\left(2+\sqrt{\log(\tmix\lceil 2\log A\rceil N)}\right),\\
        \mathbb{E}_{t-1}\left[\lVert h_t^N(\bmx)-\bar{h}(\bmx)\rVert^2\right]&\leq\frac{576L^2\eta^2\tmix\lceil2\log A\rceil}{N}(2+\log(\tmix\lceil 2\log A\rceil N))\\
        &\leq\frac{576L^2\max(1,\eta)^2\tmix\lceil2\log A\rceil}{N}(2+\log(\tmix\lceil 2\log A\rceil N)),
    \end{align*}
   as required.
\end{proof}

\begin{proof}[Proof of Lemma \ref{lem:MLMC}]
We first argue that
$$ \mathbb{E}_{t-1}[f_t(\bmx)]=\mathbb{E}_{t-1}\left[f_t^{2^{j_{\max}}}(\bmx)\right]$$
for any $\bmx$ and $t$. Note that
\begin{align*}
        \mathbb{E}_{t-1}\left[f_t\right]&=\mathbb{E}_{t-1}\left[f_t^1\right]+\sum_{j=1}^{j_{\max}}\mathbb{P}\left(J_t=j\right)2^j\mathbb{E}_{t-1}\left[f_t^{2^j}-f_t^{2^{j-1}}\right]=\mathbb{E}_{t-1}\left[f_t^{2^{j_{\max}}}\right]
    \end{align*}
    as $\mathbb{P}\left( J_t = j\right) = 1/2^j$
Similarly, we can show that
$$\mathbb{E}_{t-1}[g_t]=\mathbb{E}_{t-1}\left[g_t^{2^{j_{\max}}}\right],\quad \mathbb{E}_{t-1}[\grad f_t]=\mathbb{E}_{t-1}\left[\grad f_t^{2^{j_{\max}}}\right],\quad\mathbb{E}_{t-1}[\grad g_t]=\mathbb{E}_{t-1}\left[\grad g_t^{2^{j_{\max}}}\right]$$
holds for any $\bmx$ and $t$. For the second part, we have $$\mathbb{E}\left[\left|g_t\right|^2\right]\leq 2\mathbb{E}\left[\left| g_t - g_t^1\right|^2\right]+2H^2$$
    since $\lVert \bmx+\bmy\rVert^2\leq (\lVert\bmx\rVert+\lVert\bmy\rVert)^2\leq 2\lVert\bmx\rVert^2+2\lVert\bmy\rVert^2$ and $\left|g_t^1\right|\leq H$. Note that
    \begin{align*}
        \mathbb{E}\left[\left|g_t-g_t^1\right|^2\right]=\sum_{j=1}^{j_{\max}}\mathbb{P}(J_t=j) 2^{2j}\mathbb{E}\left[\left|g_t^{2^j}-g_t^{2^{j-1}}\right|^2\right]=\sum_{j=1}^{j_{\max}}2^{j}\mathbb{E}\left[\left|g_t^{2^j}-g_t^{2^{j-1}}\right|^2\right]
    \end{align*}
    because $\mathbb{P}\left( J_t = j\right) = 1/2^j$. Here we can bound the right-most side based on the following.
    \begin{align*}
        \mathbb{E}\left[\left|g_t^{2^j}-g_t^{2^{j-1}}\right|^2\right]\leq 2\mathbb{E}\left[\left|g_t^{2^j}-\bar{g}\right|^2\right]+2\mathbb{E}\left[\left|g_t^{2^{j-1}}-\bar{g}\right|^2\right]= \tilde{\mathcal{O}}\left(\frac{\tmix}{2^j}\right)
    \end{align*}
    where the last inequality follows from Lemma~\ref{lem:levy}.
    Then it follows that $$\mathbb{E}\left[\left|g_t-g_t^1\right|^2\right]\leq \tilde{\mathcal{O}}( j_{\max}\tmix)=\tilde{\mathcal{O}}(\tmix)$$
    where the last equality holds because $j_{\max}= O(\log T)$.
    For the last part, $$\mathbb{E}[N_t]=1+\sum_{j=1}^{j_{\max}}\mathbb{P}(J_t=j)(2^j-1)\leq 1+j_{\max} \leq 1+2\log_2 T,$$
    as required.
\end{proof}

\begin{proof}[Proof of Lemma~\ref{lem:concentration}]
    By Assumption \ref{ass:lip}, we can apply Lemma \ref{lem:levy} to $g_t, \grad g_t, \grad f_t$.  Assumptions \ref{ass:bounded}, \ref{ass:lip} implies that $|f_t(\bmx_t)|\leq J$ for some $J>0$. Thus, we can apply Lemma \ref{lem:levy} to $f_t$ as well. Let $L=\max(F,G,H,J), A=T^2$, and $\|\cdot\|_*\leq \eta\|\cdot\|_2$ for some $\eta>0$. By Lemma \ref{lem:levy}, the statement follows for
    \begin{align*}
        C(T)&=\sqrt{576L^2\max(1,\eta)^2\lceil 4\log T\rceil(2+\log(\tmix\lceil 4\log T\rceil 2^{j_{\max}}))}.
    \end{align*}
    Since $2^{j_{\max}}=\mathcal{O}(T^2)$, the order of $C(T)$ follows. 
\end{proof}

\vskip 0.2in
\bibliography{mybibfile}

\end{document}